\documentclass[12pt]{article}
\usepackage{latexsym,amsmath,amssymb,epsf,amsthm,times,amscd}
\usepackage[latin1]{inputenc}
\usepackage[usenames]{color}
\usepackage{epsf}
\usepackage{graphicx}
\usepackage{graphics}
\usepackage{subcaption}

\newfont{\bb}{msbm10 at 12pt}
\newfont{\bbp}{msbm10 at 9pt}

\def\r{\hbox{\bb R}}

\def\d{\hbox{\bb D}}

\def\b{\hbox{\bb B}}

\def\s{\hbox{\bb S}}
\def\sp{\hbox{\bbp S}}

\newcommand{\U}{\mathcal{U}}

\newcommand{\abs}[1]{\left\vert #1 \right\vert}
\newcommand{\set}[1]{\left\{#1\right\}}
\newcommand{\meta}[2]{\langle #1,#2 \rangle }

\newcommand{\pscalar}[2]{\left\langle #1, #2 \right\rangle}

\topmargin 0cm \textheight = 40\baselineskip \textwidth 16cm \oddsidemargin 0.3cm
\evensidemargin 0.4cm

\numberwithin{equation} {section}

\begin{document}

\theoremstyle{plain}\newtheorem{lemma}{Lemma}[section]
\theoremstyle{plain}\newtheorem{definition}{Definition}[section]
\theoremstyle{plain}\newtheorem{theorem}{Theorem}[section]
\theoremstyle{plain}\newtheorem{proposition}{Proposition}[section]
\theoremstyle{plain}\newtheorem{remark}{Remark}[section]
\theoremstyle{plain}\newtheorem{corollary}{Corollary}[section]

\begin{center}
\rule{15cm}{1.5pt} \vspace{.6cm}

{\Large \bf An overdetermined eigenvalue problem \\[3mm]and the Critical Catenoid conjecture} \vspace{0.4cm}

\author[Jos\'{e} M. Espinar$\mbox{}^\dag$ and Diego A. Mar\'{i}n$\mbox{}^\ddag$

\vspace{0.3cm} \rule{15cm}{1.5pt}
\end{center}

\vspace{.3cm}

\noindent $\mbox{}^\dag$ Department of Geometry and Topology and Institute of Mathematics (IMAG), University of Granada, 18071, Granada, Spain; e-mail:
jespinar@ugr.es\vspace{0.2cm}

\noindent $\mbox{}^\ddag$ Department of Geometry and Topology and Institute of Mathematics (IMAG), University of Granada, 18071, Granada, Spain; e-mail: damarin@ugr.es

\vspace{.3cm}

\begin{abstract}
We consider the eigenvalue problem $\Delta^{\sp^2} \xi + 2 \xi=0 $ in $ \Omega $  and $\xi = 0 $  along  $ \partial \Omega $, being $\Omega$ the complement of a disjoint and finite union of smooth and bounded simply connected regions in the two-sphere $\mathbb{S}^2$. Imposing that $\abs{\nabla \xi}$ is locally constant along $\partial \Omega$ and that $\xi$ has infinitely many maximum points, we are able to classify positive solutions as the rotationally symmetric ones. As a consequence, we obtain a characterization of the critical catenoid as the only embedded free boundary minimal annulus in the unit ball whose support function has infinitely many critical points.
\end{abstract}

{\bf Key Words:} Overdetermined problems, P-functions, moving planes, critical catenoid.


\section{Introduction}
Consider a Riemannian manifold $(M,g)$ and let $\Omega \subset M$ be a bounded domain. Additionally, let $f$ be a positive real-valued Lipschitz function. Consider the following elliptic equation:

\begin{equation}\label{DP}
	\left\{ \begin{matrix}
		\Delta \xi + f(\xi)=0 & \text{ in } & \Omega ,\\[2mm]
		\xi >0 & \text{ in } & \Omega, \\[2mm]
		\xi = 0 & \text{ along } & \partial \Omega.
	\end{matrix}\right.
\end{equation}

This problem is commonly known as a Dirichlet problem, and it typically has a unique solution when one exists. Moreover, we introduce the following additional condition to the problem:
\begin{equation}\label{NC}
	\frac{\partial \xi}{\partial\nu} = c \quad \text{along} \quad \partial \Omega , \quad c \in \mathbb{R}^+,
\end{equation}where $\nu$ denotes the inner normal vector to $\partial \Omega$, it transforms into an overdetermined elliptic problem (OEP) and solutions only exist for specific domains $\Omega$. Consequently, a solution to the combined problem \eqref{DP} and \eqref{NC} is represented as a pair $(\Omega, \xi)$, emphasizing that not only the function $\xi$ but also the domain $\Omega$ are essential for solving overdetermined problems.

For instance, if we consider $\Omega$ as a geodesic ball in a space form and $\xi$ as a function dependent solely on the distance from the center of the ball that satisfies \eqref{DP}, it becomes evident that $\xi$ satisfies condition \eqref{NC}. This naturally raises the question of whether these are the only types of solutions to the OEP on bounded domains. The interest in such problems traces back to Serrin's seminal paper \cite{Se}, where he established the rigidity of problem \eqref{DP} and \eqref{NC} when $\Omega$ is a bounded domain in Euclidean space and $f\equiv 1$. Serrin's work showed that if $(\Omega, \xi)$ is a $\mathcal{C}^2$ solution (i.e., both the solution $\xi$ and the domain $\Omega$ exhibit $\mathcal{C}^2$ regularity) to the OEP \eqref{DP} and \eqref{NC}, then $\Omega$ must be a round ball and $\xi$ must be rotationally symmetric. Serrin employed the Alexandrov reflection method and a modified maximum principle for domains with corners to establish this result. Subsequently, Pucci and Serrin in \cite{Pu} extended this result to include a general $f$ with Lipschitz regularity. Serrin's paper marked the inception of OEPs in which the overdetermined condition \eqref{NC} implies the symmetry of the domain $\Omega$. 

However, beyond these well-known examples, there exist more solutions to OEPs, apart from the previously mentioned trivial ones. The first non-symmetric solutions to an OEP in the Euclidean space were constructed by P. Sicbaldi in \cite{Sic}. He showed the existence of a family of solutions to \eqref{DP} and \eqref{NC} in $\mathbb{R}^n$, $n \geq 3$, and $f(t)= \lambda t$ (a linear function). These solutions were perturbations of a straight cylinder $\mathbb{B}^{n-1} \times \mathbb{R}$ obtained by reformulating the overdetermined problem in terms of a differential operator and applying the Crandall-Rabinowicz bifurcation theorem. Later, many non-trivial solutions have been discovered for various OEPs using bifurcation of symmetric domains in more general Riemannian manifolds (cf. \cite{DEP,FMW,Pa,Ruiz,Sch} and references therein).

Serrin's seminal work \cite{Se} holds significance, not only for the classification result itself, but for introducing the `moving plane method" in the study of partial differential equations (PDEs) and, since then, this method has been applied to prove similar results in other space forms, the closed hemisphere $\textup{cl}(\mathbb{S}^n_+)$ and hyperbolic space $\mathbb{H}^n$, or extended to unbounded domains (cf. \cite{BCN,Ber,EFM,EMao,EM,KP,Mir,Rei2} and references therein).

Another widely-used method in the study of OEPs is the one introduced by Weingberger \cite{We}. Weingberger employed the maximum principle with a specific function (a ``P-function") and a Pohozaev identity to establish Serrin's classification in an elementary manner. This method, now known as the ``P-function method", has been successfully adapted to different equations. For example, it has been employed to prove rigidity statements in space forms (cf. \cite{CV,QXia}), and product manifolds (cf. \cite{Far}).

While various analytical methods have been applied to study overdetermined problems (e.g., Morse theory \cite{Pace,Pace2} and complex analysis \cite{Kav,EM}), in recent years methods derived from differential geometry, particularly those related to the theory of minimal and constant mean curvature (CMC) hypersurfaces, have been successfully applied to OEPs in space forms (cf. \cite{DPW,EM,Ros}). Indeed, Serrin's theorem in \cite{Se} can be seen as the OEP version of Alexandrov's well-known theorem regarding the uniqueness of compact embedded CMC hypersurfaces in Euclidean space (cf. \cite{Ale}).

An interesting connection between OEPs and the theory of minimal surfaces can be found in the paper by H\'{e}lein et al. \cite{Hel}, where they established a link between a certain type of minimal surfaces and solutions to the problem \eqref{DP} and \eqref{NC} in $\Omega \subset \mathbb{R}^n$ with $f \equiv 0$, the ``one phase problem". In particular, they constructed a Weierstrass-like representation of solutions to the previous problem, which Traizet subsequently used in \cite{Trai} to classify solutions to the problem. 

Another connection, with greater relevance to the objective of this paper in which the theory of minimal surfaces is employed to classify solutions to an OEP, is presented in R. Souam's paper \cite{Sou} where he investigates the overdetermined eigenvalue problem:

\begin{equation}\label{eqSouam}
	\left\{ \begin{matrix}
		\Delta^{\sp^2} \xi + 2 \xi=0 & \text{ in } & \Omega,\\[2mm]
		\xi = const. & \text{ along } & \partial \Omega, \\[2mm]
		\frac{\partial \xi}{\partial\nu} = const. & \text{ along } & \partial \Omega,
	\end{matrix}\right.
\end{equation}being $\Omega$ a $\mathcal{C}^2$ domain in $(\mathbb{S}^2, g_{\sp^2})$, the unit sphere with the round metric. It is a known fact (cf. \cite{Har,Re}) that if $\xi$ solves the first equation of the previous system, then the map

\begin{equation}\label{paramSouam}
	X(z) := \nabla^{\sp^2} \xi (z) + \xi (z) \cdot z, \quad z \in \Omega
\end{equation}produces a branched minimal surface. Conversely, the support function of any branched minimal surface parameterized by its Gauss map provides a solution to the equation $\Delta^{\sp^2} \xi + 2 \xi = 0$. Using this correspondence and Nitsche Theorem \cite{JNit}, Souam established that if $(\Omega, \xi)$ is a solution to \eqref{eqSouam} and $\Omega$ is simply connected, then $\Omega$ must be a geodesic disk and $\xi$ must exhibit rotational symmetry.

Recall that, as said above, the moving plane method in the sphere can only be applied for domains contained in a closed hemisphere $\textup{cl}(\mathbb{S}^n_+)$. Recently, Espinar-Mazet \cite{EM} extended the classification of simply connected domains (not necessarily contained in any closed hemisphere) in $\s ^2$ supporting an overdetermined solution to $\Delta^{\sp^2} \xi + f( \xi )=0$ as the rotationally symmetric ones for a wider class of functions $f$, in particular, this method classify positive solutions to \eqref{eqSouam} in simply connected domains. In fact, Serrin's result is not fully generalizable in the sphere $\s^2$, for there exist non-symmetric solutions to Serrin's overdetermined problem with equation $-\epsilon \Delta^{\sp^2} \xi+ \xi-\xi^{p}$ ($\epsilon>0,p>1$) defined in simply connected domains (c.f. \cite{Ruiz2}).

This paper focuses on positive solutions to the eigenvalue problem:
\begin{equation}\label{DPS2}
	\left\{	\begin{matrix}
		\Delta^{\sp^2} \xi + 2 \xi=0 & \text{ in } & \Omega ,\\[2mm]
		\xi = 0 & \text{ along } & \partial \Omega.
	\end{matrix}\right.
\end{equation}
Here, we assume that $\Omega \subset \s^2$ is a $\mathcal{C}^2$-domain and that $\xi$ is of class $\mathcal{C}^2$ up to the boundary, so it follows that $\partial \Omega$ is analytic (and so is the function up to the boundary) because of the regularity results of \cite{Nir}.

Since overdetermined positive solutions to \eqref{DPS2} on simply connected domains are rotationally symmetric (cf. \cite{EM,Sou}), we consider ``finite type domains", which are the complements of a finite number of disjoint simply connected domains within $\mathbb{S}^2$. The concept at hand is to employ the correlation established in \cite{Sou}, albeit in a distinct fashion, and the methodologies recently advanced in \cite{ABM} for the classification of solutions to the Serrin equation within annular domains in $\mathbb{R}^2$. Such techniques (cf. also \cite{ABM,Borg,Borg2,Chr}) are particularly interesting for solutions to OEPs in domains in the sphere where the moving plane method is not fully available.

We aim to classify rotationally symmetric solutions to \eqref{DPS2} under certain conditions, which, in turn, leads to the classification of minimal surfaces with specific properties (see Section 6). In particular, we focus on solutions to \eqref{DPS2} that exhibit infinitely many maximum points. Our primary classification result is as follows:

\begin{quote}\label{theor_A}
	\textbf{Theorem A}: Let $(\Omega, \xi)$ be a positive solution to \eqref{DPS2}, with $\Omega$ being a topological annulus with $\mathcal{C}^2$ boundary. Suppose that $\xi$ has infinitely many maximum points and that the norm of its gradient is locally constant along the boundary, i.e.,
	
	\begin{equation}\label{NC_modified}
		|\nabla^{\sp^2} \xi| = b_{\Gamma} > 0, \quad \Gamma \in \pi_0 (\partial \Omega),
	\end{equation}where $b_\Gamma$ is a constant for each connected component $\Gamma$ of $\partial \Omega$. Then $\Omega$ is a rotationally symmetric neighborhood of an equator, and $\xi$ exhibits rotational symmetry with respect to the axis perpendicular to the plane defining this equator.
\end{quote}

\begin{remark}
It is worth noting that the constants considered in \eqref{NC_modified} could be distinct for different connected components. 
\end{remark}

Using the correspondence established in Souam's paper, we can derive an intriguing consequence of Theorem A. Let $\Sigma \subset \mathbb{R}^3$ be an open embedded minimal annulus with boundary, each boundary component intersecting orthogonally a sphere centered at the origin  (possibly of different radius). Such surfaces are referred to as ``minimal surfaces with free boundaries" (see Definition \ref{surfacesWithFreeBoundaries}). It is evident that if $\xi$ is a solution to \eqref{eqSouam}, and we consider the parametrization \eqref{paramSouam}, we obtain a surface of this type (a priori, not necessarily embedded). We arrive at the following result:

\begin{quote}\label{theor_B}
	\textbf{Theorem B}: Let $\Sigma \subset \mathbb{B}^3$ be an embedded minimal annulus with free boundaries, and suppose its support function (or distance function) has infinitely many critical points. Then $\Sigma$ is a piece of a rotationally symmetric catenoid.
\end{quote}
\begin{remark}
It is not hard to see that, in the above situation, the set of critical points of the support function coincides with the set of critical points of the distance (to the origin) function.
\end{remark}

Notably, if $\Sigma \subset \mathbb{B}^3$ (where $\mathbb{B}^3$ is the Euclidean unit ball centered at the origin) and $\partial \Sigma \subset \mathbb{S}^2$, then $\Sigma$ is a ``free boundary surface" within the unit ball. Such surfaces have been extensively studied in recent years (see \cite{Car} or \cite{Li} for a survey of recent results). It is important to note that there exists a unique embedded catenoid that intersects the unit sphere $\mathbb{S}^2$ orthogonally, known as the ``critical catenoid" (see \cite{Dev}). Consequently, an immediate consequence of the aforementioned result is:

\begin{quote}\label{corollary_C}
	\textbf{Corollary C}: Let $\Sigma \subset \mathbb{B}^3$ be an embedded free boundary minimal annulus, and suppose its support function has infinitely many critical points. Then $\Sigma$ is the critical catenoid.
\end{quote}

\begin{remark}
To our knowledge, this is the first classification result of the critical catenoid that does not use Schoen-Fraser classification result by first Steklov eigenfunctions (c.f. \cite[Theorem 1.2]{FS2}). 
\end{remark}

The question of whether the critical catenoid is the only embedded free boundary minimal annulus within the unit ball remains open. In recent years, this question has garnered significant attention within the mathematical community. The prevailing belief is that the answer is affirmative, and numerous partial results have been obtained to substantiate this belief (see, e.g., \cite{Dev,FS,Kus,McG}). This corollary can be regarded as a small contribution to the proof of the so-called ``critical catenoid conjecture," an outstanding open problem in the field.

\begin{remark}
The tools employed in this paper have the potential for extension to positive solutions of the general eigenvalue problem, specifically, positive solutions to $\Delta \xi + \lambda \xi = 0$ within the domain $\Omega$ and incorporating overdetermined boundary conditions. However, our primary focus lies in exploring the geometric applications associated with the particular case of $\lambda = 2$.
\end{remark}

\subsection{Organization of the paper}
\mbox{}
Our primary focus in Section \ref{sec_2} is to compute the rotationally symmetric solutions to \eqref{DPS2}. We present these solutions as a one-parameter family of normalized functions $(\Omega_R, \xi_R)$ that depends on the height relative to the horizontal equator of $\mathbb{S}^2$, $R \in [0,1]$. We also interpret this family of solutions as the support functions of a family of vertical catenoids. 

In Section \ref{sec_3}, we narrow our focus to a region $\mathcal{U}$ within $\Omega$ without maximum points for a particular positive solution $(\Omega, \xi)$ to \eqref{DPS2}. We introduce a parameter $R(\U)$ in a similar manner to \cite{ABM}. We then establish that if $\Omega$ is not a topological disk, then $R(\U) = \bar R$ is well-defined and takes values in $[0,1)$ (Theorem \ref{theor_CriticalHeight}). This allows us to associate a model solution $(\Omega_{\bar R}, \xi_{\bar R})$ with a general solution within the region $\U$. 

Section \ref{sec_4} introduces what is referred to as a ``pseudo-radial function" (see Definition \ref{psudoRadialFunctions}), which we use to derive estimates for the norm of the gradient of $\xi$ along its level sets (see Theorem \ref{theor_Gradient}) and for the geodesic curvature of the level sets (see Theorem \ref{curvature_Theorem}) within $\U$. We also provide a relation between the length of the zero level set and the top level set (Proposition \ref{lengthBounds}).

In Section \ref{sec_5}, we examine positive solutions to the OEP \eqref{DPS2} and \eqref{NC_modified}. We establish an estimate for the length of the zero level sets of the region $\partial \U$ using the overdetermined condition (Proposition \ref{lengthZeroLevelSets}). This estimate, combined with those obtained in Section 4, enables us to conclude that either $\U \subset \Omega$ or $\Omega \setminus \U$ is contained within an open hemisphere. The statement of Theorem A follows from the moving plane method. In fact, we are able to establish a more general result, as indicated in Theorem \ref{rigidityConstantNeumannOEP}.

In Section \ref{sec_6}, we delve into the geometric consequences of the classification results obtained in Section \ref{sec_5}. Through the study of minimal surfaces with free boundaries, we ascertain that an embedded minimal annulus with each boundary component intersecting a sphere (centered at the origin) orthogonally, must possess an injective Gauss map and its support function must have a definite sign. This immediately leads to the statement of Theorem B and Corollary C using the correspondence from Souam's paper.


\section{The model solutions}\label{sec_2}

In this section, we will describe the family of positive and bounded rotationally symmetric solutions $(\Omega_{R}, \xi_{R})$ to the Dirichlet problem \eqref{DPS2}, depending on a parameter $R \in [0,1]$. In this family, $\Omega_0$ is a symmetric tubular neighborhood of an equator and $\Omega_1$ is an open hemisphere. 

Let $\r ^3 $ denote the usual Euclidean space, where $(x,y,z)$ represent cartesian coordinates and $\pscalar{\cdot}{\cdot}$ is the Euclidean scalar product. We parametrize the unit sphere in cylindrical coordinates:
$$ \s ^2 = \set{(\sqrt{1-r^2} \cos \theta , \sqrt{1-r^2} \sin \theta , r) \, : \, r \in [-1,1] , \, \theta \in [0,2\pi)} ,$$hence the induced metric in $\s^2$ in these coordinates is given by
$$ g_{\sp^2} = \frac{1}{1-r^2} dr^2 + (1-r^2) d\theta ^2 .$$

Define the orthonormal frame in $\s^2 \setminus \set{{\bf s}, {\bf n}}$, where ${\bf s} := (0,0,-1)$ and ${\bf n} := (0,0,1)$ denote the south and north pole respectively, given by 
$$ n = \sqrt{1-r ^2} \partial _r \quad \text{and} \quad t = \frac{1}{\sqrt{1-r^2}} \partial _\theta .$$
Then, the Christoffel symbols associated to this frame can be computed from
$$  \nabla _{n} n =0 , \qquad  \nabla_{n} t =0 , \qquad    \nabla_{t} n =  - \frac{r}{\sqrt{1-r^2}} t , \qquad  \nabla_{t} t = \frac{r}{\sqrt{1-r^2}}n .$$

From now on, as we will be always considering differential equations on the sphere, we denote by $\nabla$, $\Delta$ and $\nabla^2$ the gradient, laplacian and hessian operators in $\mathbb{S}^2$.

\subsection{Rotationally symmetric solutions}

From the first equation of \eqref{DPS2} using the cylindrical coordinates, assuming that $\xi$ doesn't depend on $\theta$, we obtain
\[
(1-r^2) \frac{\partial^2 \xi}{\partial r^2} - 2 r \frac{\partial \xi}{\partial r} + 2 \xi =0.
\]
This is an ordinary differential equation of second order, and it has a two-parameter family of solutions:
\begin{equation}\label{OdeSol}
\xi_{\alpha, \beta} (r)= \alpha (1-r \, \textup{arctanh}(r))+\beta r, \quad r \in (-1,1), \quad \alpha, \beta \in \mathbb{R}.
\end{equation}

Hence, a function $\xi _{\alpha , \beta }$ will be a solution to \eqref{DPS2} if there exist $a, b \in [-1,1]$, $a<b$, depending on $\alpha $ and $\beta$ such that 
\[
\xi _{\alpha , \beta}(a)=\xi(b)=0 \quad \text{and} \quad  0 < \left(\xi _{\alpha , \beta}\right)_{| (a,b)} < + \infty .
\]

\begin{itemize}
\item {\bf Case 1: $\alpha =0$ and $\beta \neq 0$:}  We obtain the \textit{reescale height function} $\xi_{0,\beta} (r)= \beta r$. This is a monotone function that takes the zero value only at the origin, and we obtain the solution $(\Omega _{1},\xi_{0, \beta})$, where $\Omega _1$ is the open hemisphere centered either at the north pole ${\bf n}$ if $\beta>0$ or at the south pole ${\bf s}$ if $\beta<0$.
\item  {\bf Case 2: $\alpha  \neq 0$ and $\beta =0 $:} We obtain the function
\[
\xi_{\alpha,0}(r)= \alpha (1-r \, \text{arctanh}(r)), \quad r \in (-1,1).
\]
If we assume that $\alpha >0$, then $\xi_{\alpha,0}(r)$ has a positive maximum at the origin since $\xi_{\alpha,0}(-r) = \xi_{\alpha,0}(r)$, it is monotonic in both intervals $(-1,0)$ and $(0,1)$ and 
 \[
 \lim_{r \to 1^{-}} \xi_{\alpha,0}(r) = \lim_{r \to -1^{+}} \xi_{\alpha,0}= - \infty .
 \]
Hence, it is clear that there exists $\bar{r} \in (0,1)$ such that $\xi_{\alpha,0}(\bar{r})=\xi_{\alpha,0}(-\bar{r})=0$ and $\left(\xi_{\alpha,0}\right)_{| (-\bar{r}, \bar{r})}>0$. In this case, we can easily compute that 
$$ \frac{\partial \xi _{\alpha ,0}}{\partial r} (0) = 0  \text{ and } (\xi_{\alpha,0})_{{\rm max}} = \xi _{\alpha , 0} (0)= \alpha .$$

Therefore, $(\Omega _0 , \xi _{\alpha , 0} ) $ is solution to \eqref{DPS2} such that $\Omega _0 $ is a symmetric tubular neighborhood of the equator $\s ^2 \cap \set{z =0}$. Finally, observe that $\xi_{-\alpha,0} (r) = - \xi _{\alpha , 0} (r)$ which means that it is negative in $(-\bar{r},\bar{r})$ and positive in $(-1,1) \setminus [-\bar{r}, \bar{r}]$. However, 
\[
 \lim_{r \to 1^{-}} \xi_{-\alpha,0}(r) = \lim_{r \to -1^{+}} \xi_{-\alpha,0}= + \infty ,
 \]
that is, $\xi_{-\alpha,0}$ is not bounded. 
\item  {\bf Case 3: $\alpha  \neq 0$ and $\beta \neq 0 $:}  Consider $\alpha >0$. Since 
$$ \xi _{\alpha , \beta } (r)  = \alpha  \, \xi _{1 , \omega } (r) \text{ and }\xi_{1,\omega }(-r)=\xi_{1,-\omega}(r) $$
where $\omega =\beta/\alpha \neq 0$, we can write solutions to \eqref{OdeSol} in a more interesting way
\[
\xi_{\alpha,  \beta } (r)=  \alpha \, \xi _{1, \omega } (r) , \text{ where } \xi _{1, \omega } (r) = 1-r \, \textup{arctanh}(r)+ \omega r  .
\]
We first compute the zeros of $\xi_{\alpha, \beta}$. Since $\xi_{1, \omega}(0)=1$ and  $\lim_{r \to 1^{-}} \xi_{1, \omega }(r) = \lim_{r \to -1^{+}} \xi_{1,\omega}= - \infty $, then $\xi _{1, \omega}$ there exist uniques $ r_- \in (-1,0)$ and $ r_+ \in (0,1)$ such that $\xi_{1, \omega}(r_{-})=\xi_{1, \omega}(r_{+})=0$. Moreover, 
$$ - \frac{\partial \xi_{1, \omega }}{\partial r}(r) =\text{arctanh}(r)+\frac{r}{1-r^2}-\omega , \quad r \in (r_{-},r_{+})  $$being 
$$ \frac{\partial \xi_{1, \omega }}{\partial r}( r_-) = -\frac{1}{ r_- (1-  r_- ^2)} >0 \text{ and } \frac{\partial \xi_{1, \omega }}{\partial r}( r_+) = -\frac{1}{ r_+ (1-  r_+ ^2)} < 0,$$
so we conclude that there exists a unique $R \in ( r_{-},  r_{+})$ such that $\frac{\partial \xi_{1,\omega}}{\partial r}(R)=0$, which is a maximum. We have
\begin{equation}\label{rparameter}
	 \omega =\text{arctanh}(R)+\frac{R}{1-R^2},
\end{equation}
and the value of the maximum of $\xi_{\alpha, \beta}$ is given by
\[
(\xi_{\alpha, \beta })_{\textup{max}}= \alpha \, \xi_{1, \omega } (R)=\frac{\alpha}{1-R^2}.
\]

Finally, we note that if $\alpha<0$ then $\xi_{\alpha, \beta }$ has a negative sign in $( r_{-},  r_{+})$ and a positive sign in $(-1,1)\setminus [ r_{-}, r_{+}]$; but $\xi _{\alpha , \beta} $ is not bounded in $(-1,1)\setminus [ r_{-},  r_{+}]$.

\end{itemize}

Summarizing, we have obtained a two-parameter family of solutions to the Dirichlet problem \eqref{DPS2}, where the parameters are a scale parameter $\alpha$ and a height parameter $R$, where $R$ indicates the parallel in $\s ^2$ where the curve of maximum points is located. In fact, we can describe the solutions previously obtained as
\[
\xi_{\alpha,R}(r)=\alpha\left(1-r\textup{arctanh}(r)+\left(\textup{arctanh}(R)+\frac{R}{1-R^2}\right)r\right) , \quad \forall r \in [ r_{-}(R), r_{+}(R)].
\]
However, note that the eigenvalue equation of the Laplace-Beltrami operator on a general Riemannian manifold is invariant under a change of scale. Thus, the parameter $\alpha$ is not important and for each $R$ we can fix a value of the scale to obtain a 1-parameter family of rotationally symmetric solutions to \eqref{DPS2}. Although the most natural way of obtaining the 1-parameter family is to fix $\alpha=1$, we will use a different normalization for geometrical reasons. 
\begin{figure}[htb]
	\centering
	\includegraphics[width=12cm, height=7cm]{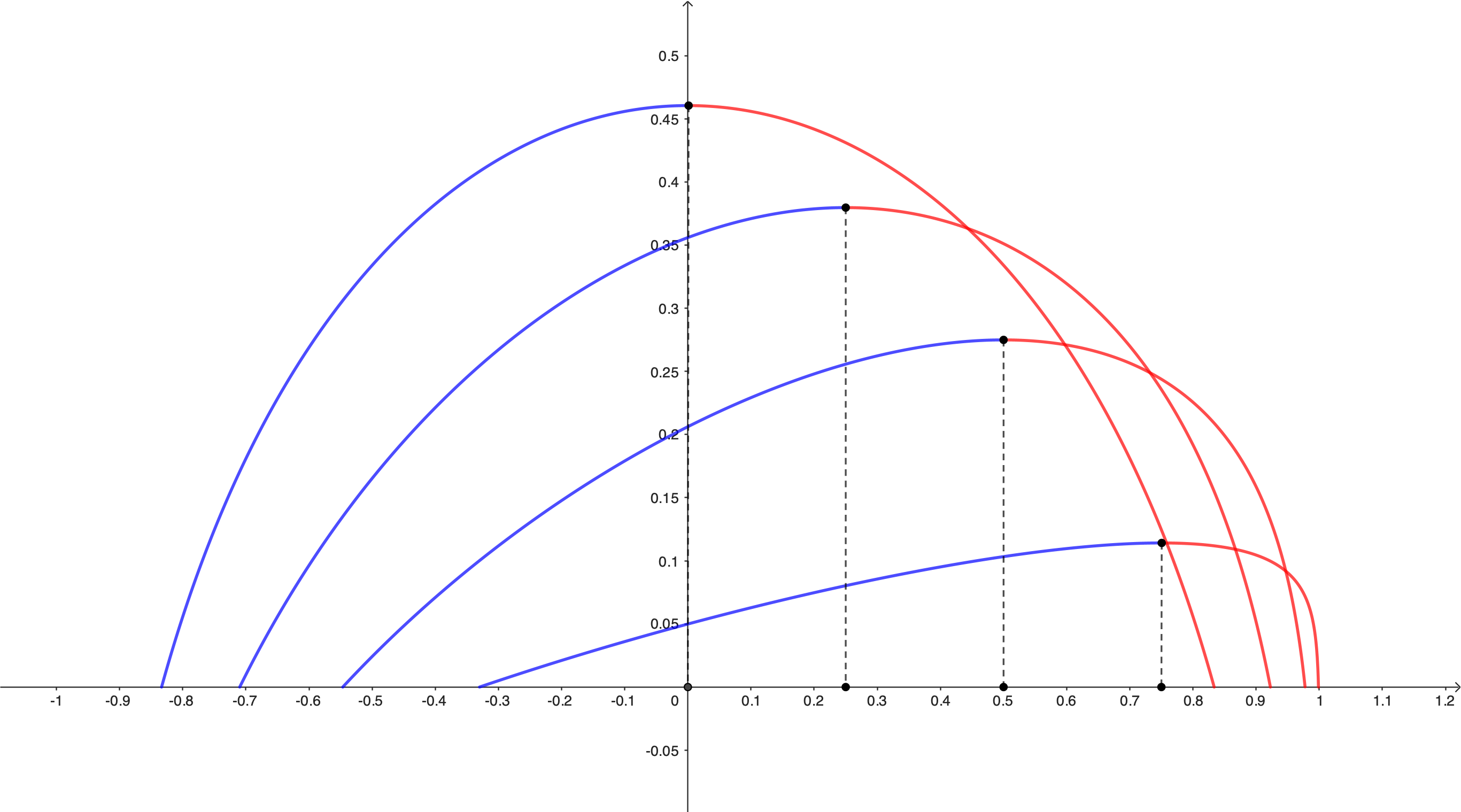}
	\caption{The diagram shows the graphs of the functions that define the model solutions $\xi_R$ with $R \in \{0,0.25,0.5,0.75\}$. In the blue part, the function is increasing, while in the red part, the function is decreasing, and the function takes its maximum values at the point $x=R$. With the normalization at hand, the value of $(\xi_{R})_{\textup{max}}$ goes to zero as $R$ approaches $1$, so $\xi _R \to 0$ in compact sets away from the north pole. In order to embrace the positive solution in the disk-type domain $\Omega _1$ we have to lose continuity at $R=1$ in the family $\xi _R$ (cf. Proposition \ref{prop_2.1}).}
\end{figure}
\begin{proposition}\label{prop_2.1}
Let $(\Omega,\xi)$ be a positive bounded rotationally symmetric solution to \eqref{DPS2}. Then, up to a rotation, a reflection with respect to the plane $\{z=0\}$ and a dilation,  $(\Omega, \xi)=(\Omega_R, \xi_{R})$ for $R \in [0,1]$ as described below:
\begin{itemize}
\item If $R=1$, $(\Omega_1,\xi_{1})$ is a solution to \eqref{DPS2} in a disk-type domain, where $\Omega_1$ is the open hemisphere centered at the north pole ${\bf n}$ and $\xi_{1}$ is the height function  $\xi _{1} (r) = r$.

\item If $R \in [0,1)$, then $(\Omega _R , \xi _{R})$ is a solution to \eqref{DPS2} in a annular-type domain given by
	\[
	\xi_{R}(r)= \alpha (R) \left(1-r \, \textup{arctanh}(r)+\left(\textup{arctanh}(R)+\frac{R}{1-R^2}\right)r\right) , \quad \forall r \in [ r_{-}(R), r_{+}(R)],
	\]
	where $-1< r_{-} (R)<0<  r_{+} (R)<1$ with $-r_{-} (0)= r_{+} (0)=\bar r$, being $\alpha(R)=r_+(R) \sqrt{1-r_+ (R)^2}$ and 
	\[
	\Omega_{R}=\left\{ \left(\sqrt{1-r^2}\cos(\theta),\sqrt{1-r^2}\sin(\theta),r\right) \in \s^2 ~\colon~  r \in [ r_{-}(R),  r_{+}(R)],\,\theta \in [0, 2\pi)  \right\}. 
	\]
Moreover, 
$$ \max_{\textup{cl}(\Omega)}\xi_R:= (\xi_{R })_{\textup{max}}=  \xi_{R } (R)=\frac{\alpha (R)}{1-R^2}=\frac{r_+(R) \sqrt{1-r_+ (R)^2}}{1-R^2} $$and we denote the boundary components as
\begin{equation}\label{boundaryComponents}
	\Gamma_{\pm}(R)=\left\{ \left(\sqrt{1- r_{\pm}(R)^2}\cos(\theta),\sqrt{1- r_{\pm}(R)^2}\sin(\theta), r_{\pm}(R)\right) \in \s^2 ~\colon~  \theta \in [0,2\pi)  \right\},
\end{equation}
where $\Gamma_{+}(R)$ and $\Gamma_{-}(R)$ are called the \textit{upper} and \textit{lower} component of $\partial \Omega _R$ respectively (with respect to the equator $\s^2 \cap \{z=0\}$).	
\end{itemize}
\end{proposition}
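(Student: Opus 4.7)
The plan is to reduce the proposition to the explicit ODE analysis already carried out in the preceding paragraphs of the section. Since $\xi$ is rotationally symmetric, after an ambient rotation of $\mathbb{S}^2$ we may assume that the rotation axis is the $z$-axis, so $\xi$ is a function of $r$ alone. Substituting into $\Delta^{\mathbb{S}^2}\xi + 2\xi = 0$ produces the ODE
\begin{equation*}
(1-r^2)\xi''(r) - 2r\,\xi'(r) + 2\xi(r) = 0,
\end{equation*}
whose general solution is the two-parameter family $\xi_{\alpha,\beta}(r)=\alpha(1-r\,\textup{arctanh}(r))+\beta r$ displayed in \eqref{OdeSol}. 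The proof then amounts to running through the three cases $(\alpha,\beta)$ and showing that, up to the allowed isometries and rescaling, each bounded positive solution coincides with one of the listed $(\Omega_R,\xi_R)$.

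In Case~1 ($\alpha=0$), the solution is $\xi=\beta r$; after a reflection across $\{z=0\}$ if needed, we may assume $\beta>0$, and a dilation then reduces $\beta$ to the value fixing $\Omega_1$ to be the open upper hemisphere with $\xi_1(r)=r$. In Cases~2 and~3 ($\alpha\neq 0$), the boundedness assumption rules out $\alpha<0$: as shown in the case analysis, $\xi_{-|\alpha|,\beta}$ blows up to $+\infty$ at $r=\pm 1$. Hence $\alpha>0$, and the section has already established that there exist unique $r_{-}(R)<0<r_{+}(R)$ at which $\xi_{\alpha,\beta}$ vanishes and a unique interior maximum at some $R\in(r_{-},r_{+})$, with $\beta/\alpha=\textup{arctanh}(R)+R/(1-R^2)$ by \eqref{rparameter}. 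A reflection $r\mapsto -r$ exchanges the sign of $R$, so we may take $R\ge 0$; thus the family is parametrized by $R\in[0,1)$ and the scale $\alpha>0$.

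To collapse the scale, one invokes the invariance of the eigenvalue equation under multiplication of $\xi$ by a constant. For each $R\in[0,1)$ we fix the specific normalization $\alpha(R)=r_{+}(R)\sqrt{1-r_{+}(R)^{2}}$; this choice is dictated by the geometric interpretation developed later in the paper (support functions of vertical catenoids), and substituting it into $\xi_{\alpha,\beta}$ yields the explicit formula for $\xi_R$ as well as the formulas for $\Omega_R$, the maximum value $\alpha(R)/(1-R^2)$, and the boundary components $\Gamma_{\pm}(R)$ stated in the proposition.

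The only really substantive point is the dichotomy between the annular case $R\in[0,1)$ and the disk-type case $R=1$: as $R\to 1^-$, one checks $r_{+}(R)\to 1$ and hence $\alpha(R)\to 0$, so the solutions $\xi_R$ converge to $0$ on compact subsets of $\mathbb{S}^2\setminus\{{\bf n}\}$, meaning that the chosen normalization cannot be extended continuously across $R=1$. This forces a separate convention for the hemisphere solution obtained in Case~1, exactly as formulated in the statement. No step beyond this discontinuity is an obstacle — the rest is the bookkeeping already laid out in the preceding case analysis.
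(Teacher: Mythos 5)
Your proposal is correct and follows essentially the same route as the paper, which proves Proposition \ref{prop_2.1} precisely through the preceding case analysis of the two-parameter family $\xi_{\alpha,\beta}$ (ruling out $\alpha<0$ by boundedness, using the reflection $r\mapsto -r$ to take $R\ge 0$, and fixing the scale by the normalization $\alpha(R)=r_+(R)\sqrt{1-r_+(R)^2}$). The observation about the discontinuity of the family at $R=1$ likewise matches the paper's own remark.
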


Observe that the constants $ r_-(R)$ and $ r_+ (R)$ are smooth functions of $R$ since they are implicit solutions to the equation
\begin{equation}\label{implicitH}
1-r \, \textup{arctanh}(r)+\left(\textup{arctanh}(R)+\frac{R}{1-R^2}\right)r = 0.
\end{equation}

\begin{figure}[htb]
	\centering
	\includegraphics[width=10cm, height=7cm]{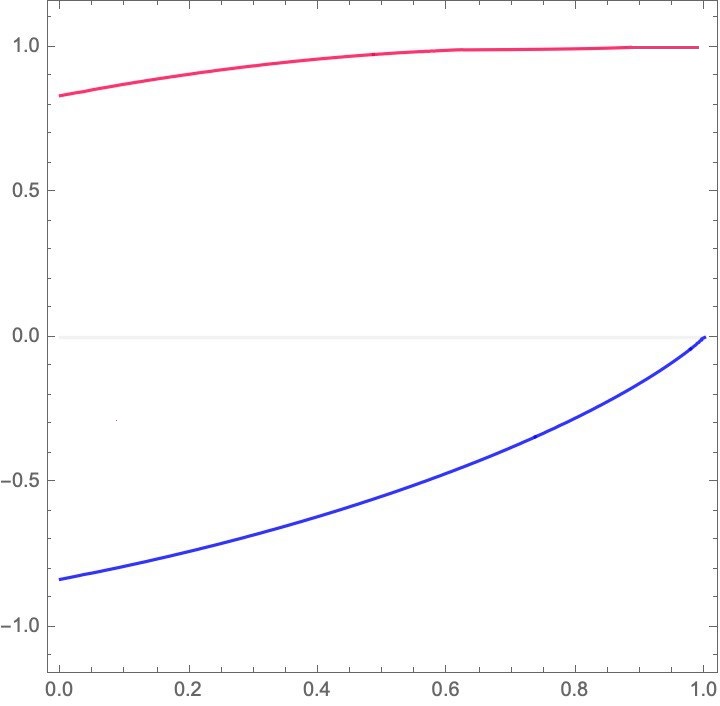}
	\caption{The graphs of $r_+$ (above) and $r_-$ (below) as functions of the parameter $R$. One can see that $-r_-(0)=r_+ (0)$ and $r_+ \to 1$ and $r_- \to 0$ as $R$ goes to $1$.} 
\end{figure}

Taking into account that $r_{\pm}(R) \neq 0$ for some $R \in [0,1]$ by \eqref{implicitH}, we have that
\begin{equation}\label{derivH}
\frac{\partial  r_{\pm}}{\partial R} (R)= \frac{2  r_{\pm}^2 (1-  r_{\pm}^2)}{(1-R^2)^2}, \quad \forall R \in [0,1),
\end{equation}
so both $ r_{-}$ and $ r_{+}$ are increasing functions of $R$. The following properties of our family of solutions are straightforward.

\begin{proposition}\label{propertiesXi}
Let $(\Omega_R, \xi_R)$, $R \in [0,1)$, be one of the model solutions described in Proposition \ref{prop_2.1}. Set $\alpha (R)=\alpha$. Then we have the following identities:
\begin{itemize}
	\item  The derivatives of the function up to second order:
	\begin{equation*}
	\frac{\partial \xi_{R}}{\partial r} (r)= \frac{1}{r} \left(\xi_{R}-\frac{\alpha}{1-r^2}\right) \quad \textup{and} \quad \frac{\partial^2 \xi_{R}}{\partial r^2} (r)=-\frac{2 \alpha}{(1-r^2)^2}.
	\end{equation*}
\item The gradient and Hessian :
\begin{equation*}
	\begin{split}
	&\nabla \xi_{R} (r)= \frac{\sqrt{1-r^2}}{r}  \left(\xi_{R}-\frac{\alpha}{1-r^2}\right) n \quad \textup{and}  \\
	 &\nabla^2 \xi_{R} (r)= \begin{pmatrix}
		-\xi_{R}-\frac{\alpha}{1-r^2} & 0 \\
		0 & -\xi_{R}+\frac{\alpha}{1-r^2}
	\end{pmatrix}.
	\end{split}
\end{equation*}
\item The norm of the gradient along the zero level sets:
\begin{equation*}
	\abs{\nabla \xi_R}=1 \, \textup{along }\Gamma_{+}(R) \quad \textup{and} \quad \abs{\nabla \xi_R}=\frac{r_+ \sqrt{1-r_+^2}}{r_- \sqrt{1-r_-^2}} \, \textup{ along }\Gamma_{-} (R).
\end{equation*}
\end{itemize}
Moreover, the following identity follows away from the critical points of $\xi _{R}$:
\begin{equation}\label{relacionHessiano}
	\nabla^2 \xi_{R} (r) = \left(\frac{r^2 \alpha}{((1-r^2)\xi_R - \alpha)^2} \abs{\nabla \xi_{R}} - \xi \right)g_{\sp^2} - \frac{2 \alpha r^2}{((1-r^2) \xi_R - \alpha)^2} d \xi_{R} \otimes d \xi_{R}.
\end{equation}
\end{proposition}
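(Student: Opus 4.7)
The proposition is a sequence of explicit computations starting from the closed-form expression
\[
\xi_R(r)=\alpha\!\left(1-r\,\textup{arctanh}(r)+\omega\,r\right),\qquad \omega=\textup{arctanh}(R)+\frac{R}{1-R^2},
\]
given in Proposition \ref{prop_2.1}. The plan is to first compute the radial derivatives, then translate them to the geometric objects $\nabla\xi_R$ and $\nabla^2\xi_R$ using the orthonormal frame $\{n,t\}$ and the Christoffel symbols recorded at the beginning of Section \ref{sec_2}, and finally to rearrange the Hessian into the intrinsic form \eqref{relacionHessiano}.

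First I would differentiate $\xi_R$ directly: using $\tfrac{d}{dr}[r\,\textup{arctanh}(r)]=\textup{arctanh}(r)+\tfrac{r}{1-r^2}$ gives $\partial_r\xi_R=\alpha\bigl(\omega-\textup{arctanh}(r)-\tfrac{r}{1-r^2}\bigr)$, and multiplying $\tfrac{1}{r}\bigl(\xi_R-\tfrac{\alpha}{1-r^2}\bigr)$ out shows that the two expressions agree. A second differentiation kills the $\omega$ and the $\textup{arctanh}$ contributions and yields $\partial_r^2\xi_R=-2\alpha/(1-r^2)^2$. For the gradient I would use that $\xi_R$ depends only on $r$ and $g^{rr}=1-r^2$, so $\nabla\xi_R=(1-r^2)\partial_r\xi_R\,\partial_r=\sqrt{1-r^2}\,\partial_r\xi_R\,n$, into which the formula for $\partial_r\xi_R$ is substituted.

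Next I would compute the Hessian component by component in the frame $\{n,t\}$ via $\nabla^2\xi_R(X,Y)=X(Y\xi_R)-(\nabla_XY)\xi_R$. Since $t\xi_R=0$ and $\nabla_n n=\nabla_n t=0$, the only nonzero contributions come from $\nabla^2\xi_R(n,n)=\sqrt{1-r^2}\,\partial_r\bigl(\sqrt{1-r^2}\,\partial_r\xi_R\bigr)=-r\,\partial_r\xi_R+(1-r^2)\partial_r^2\xi_R$ and from $\nabla^2\xi_R(t,t)=-(\nabla_t t)\xi_R=-r\,\partial_r\xi_R$. Substituting the first-order formulas gives exactly the diagonal matrix stated, and then tracing reproduces $\Delta\xi_R=-2\xi_R$, consistent with \eqref{DPS2}. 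The norm of the gradient along $\Gamma_\pm(R)$ is obtained by evaluating $\lvert\nabla\xi_R\rvert=\tfrac{\sqrt{1-r^2}}{|r|}\tfrac{\alpha}{1-r^2}=\tfrac{\alpha}{|r|\sqrt{1-r^2}}$ at $r=r_\pm(R)$ (using $\xi_R(r_\pm)=0$) and then inserting the normalization $\alpha=r_+\sqrt{1-r_+^2}$.

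Finally, for \eqref{relacionHessiano} I would rewrite the Hessian as $-\xi_R\,g+\tfrac{\alpha}{1-r^2}(t\otimes t-n\otimes n)$, use $t\otimes t-n\otimes n=g-2\,n\otimes n$, and then express $n\otimes n$ through $d\xi_R\otimes d\xi_R$. Since $d\xi_R=\partial_r\xi_R\,dr=\sqrt{1-r^2}\,\partial_r\xi_R\,n^{\ast}$, the computation $\partial_r\xi_R=\tfrac{(1-r^2)\xi_R-\alpha}{r(1-r^2)}$ gives $d\xi_R\otimes d\xi_R=\tfrac{((1-r^2)\xi_R-\alpha)^2}{r^2(1-r^2)}\,n^{\ast}\otimes n^{\ast}$ (which is $\lvert\nabla\xi_R\rvert^{2}$ times $n\otimes n$ in the orthonormal frame), so $n\otimes n$ can be solved for and substituted back. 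Matching coefficients of $g$ and $d\xi_R\otimes d\xi_R$ yields \eqref{relacionHessiano} away from critical points, where $(1-r^2)\xi_R-\alpha\neq 0$. The only mildly delicate point is keeping track of signs (since $r$ changes sign across the equator while $\xi_R>0$) and verifying that the identity is indeed in terms of $\lvert\nabla\xi_R\rvert^{2}$ after simplification; otherwise the entire proposition is a sequence of bookkeeping computations.
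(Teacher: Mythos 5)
Your computation is correct and is exactly the direct verification the paper has in mind (the paper omits the proof, calling these identities ``straightforward''): all the radial derivatives, the frame computation of the Hessian via the listed Christoffel symbols, and the boundary gradient values check out. You are also right that \eqref{relacionHessiano} only holds with $\abs{\nabla \xi_R}^2$ in place of $\abs{\nabla \xi_R}$ in the coefficient of $g_{\sp^2}$ --- this is a typo in the paper's statement, consistent with how the identity is actually used at the start of Section \ref{sec_4}.
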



\subsection{Geometric interpretation}
Let us consider the minimal catenoid $C_{\alpha, \omega}$, defined as the unique rotationally symmetric minimal surface around the $z-$axis, symmetric with respect to the plane $\set{z=-\omega }$ and necksize $\alpha$. This catenoid can be parameterized by
\begin{equation}\label{parametrizationCatenoids}
	\psi_{\alpha, \omega} ( r, \theta ) = \alpha  \left( \frac{ \cos  \theta}{\sqrt{1- r^2}}  ,  \frac{\sin \theta }{\sqrt{1-r^2}} , {\rm arctanh} (r) - \omega \right) , \,\, r \in  (-1,1) \text{ and } \theta \in [0, 2\pi) ,
\end{equation}
with outward unit normal 
$$ N(r,\theta) = \left( \sqrt{1- r^2} \, \cos  \theta  , \sqrt{1- r^2} \, \sin \theta , - r \right) \in \s^2 \setminus \set{{\bf s}, {\bf n}}.$$

The support function $\xi (r,\theta ) = \meta{\psi (r ,\theta)}{N((r, \theta)}$ is then given by 
$$ \xi (r, \theta) = \alpha  (1- r \, {\rm arctanh}(r) + \omega r ) .$$

The coordinates $(r ,\theta) \in (-1,1) \times [0, 2 \pi )$ can be seen as a parametrization of $\s^2 \setminus \set{{\bf s}, {\bf n}}$ via the unit normal $N$. As we noted in the introduction, the support function $\xi$ satisfies 
$$\Delta \xi + 2 \xi =0 \text{ in } \s^2 \setminus \set{{\bf s}, {\bf n}}.$$

Thus, the rotationally symmetric solutions in annular domains described above are nothing but the support function of a given catenoid in $\r ^3$ (up to scaling and vertical translation) defined in the appropriate spherical domain by the Gauss map.  Now we can use this correspondence to derive some geometric properties of the functions defined in the previous section.

On the one hand, note that the zero level set of $\xi$ gives us the points of $C_{\alpha, \omega}$ where the position vector $\psi(r, \theta) = |\psi(r, \theta) |$ is orthogonal to the normal vector $N(r, \theta)$. Note also that $\xi$ depends only on $r$, so the image via $\psi$ of the zero level set of $\xi$ on $C_{\alpha, \omega}$ are horizontal circles contained in spheres (of different radii except for $\omega =0$) centered at the origin, and $C_{\alpha, \omega}$ intersects these spheres orthogonally. 

On the other hand, note that
\[
\lim_{r \to \pm 1} N(r,\theta)= \mp (0,0,1) \quad \textup{and} \quad \lim_{r \to \pm 1} \sqrt{1-r^2} \psi(r,\theta)=\alpha(\cos \theta, \sin \theta, 0),
\]
and it is clear that the function $f(r):= \sqrt{1-r^2} \left({\rm arctanh} (r) - \omega\right)$ satisfies $f((-1,1)) = (-1,1)$. Thus, we can deduce geometrically that there exist uniques $-1<r_-<0<r_+<1$ such that $\xi(r_+)=\xi(r_-)=0$. 

Let us denote the image in $C_{\alpha, \omega}$ of the two components of the zero level set of $\xi$ as $\zeta_{\pm}=\{\psi (r_{\pm}, \theta) ~\colon~ \theta \in [0,2\pi)\} $. Now, observe that
\begin{equation}\label{correspondenceGradient}
\abs{\psi (r_{\pm}, \theta)}^2 = \alpha^2 \left(\frac{1}{1-r_{\pm}^2}+\left(\textup{artanh}(r_{\pm})-\omega\right)^2\right)=\frac{\alpha^2}{r_{\pm}^2 \left(1-r_{\pm}^2\right)},
\end{equation}
so taking $\alpha =r_{+}\sqrt{1-r_+^2}$ we obtain that $\zeta_+$ is contained in the unit sphere $\mathbb{S}^2 = \mathbb{S}^2(0,1)$. On the other hand, it easy to check that the function $f(R)=r_{+}^2(R)(1-r_+^2 (R))/r_{-}^2(R)(1-r_-^2(R))$ with $R \in [0,1)$ has derivative
\[
f'(R)=\frac{4 r_{+}^2(R)(1-r_+^2(R))r_{-}^2(R)(1-r_-^2(R))}{(1-R^2)^2 r_{-}^4(R)(1-r_-^2(R))^2} \left( (r_+-2r_+^3)-(r_- - 2r_-^3) \right),
\]
so taking into account that $\bar r >0.8$ it can be easily checked that $f'(R)<0$ for all $R \in [0,1)$. We deduce then that there exist $\tilde r \leq 1$ such that $\zeta_- \in \mathbb{S}^2 (\tilde r)$, where $\mathbb{S}^2 (\tilde r)$ denotes the euclidean sphere centered at the origin and radius $\tilde r$.

 Thus, we conclude that solutions $\xi_R$ described in Proposition \ref{prop_2.1} correspond to support functions of pieces of catenoids
\begin{equation}\label{modelCatenoids}
C_R:= \left\{\psi_{\alpha (R), \omega (\alpha (R))} (r, \theta) ~\colon~  r \in  (r_{-} (R),r_{+}(R)), \, \theta \in [0, 2\pi)\right\},
\end{equation}
where
\[
\alpha (R)=r_+ (R)\sqrt{1-r_+^2 (R)} \quad \textup{and} \quad \omega(R) = \textup{arctanh}(R)+\frac{R}{1-R^2}, 
\]
and $\psi_{\alpha (R), \omega (R)}$ is given by \eqref{parametrizationCatenoids}. It holds that $C_R \subset \mathbb{B}^3$ for each $R \in [0,1)$. Also,  since $- r_-(0)=r_+(0) = \bar r$, it is clear that $C_0$ corresponds to the critical catenoid, i.e. the unique free boundary minimal catenoid in the unit sphere. Note that  $N(C_R)=\Omega_{-R}$ is the reflection with respect to the plane $\{z=0\}$ of $\Omega_{R}$ for each $R \in [0,1)$.

\begin{figure}[h!]
	\centering
	\begin{subfigure}[b]{0.42\linewidth}
		\includegraphics[width=\linewidth]{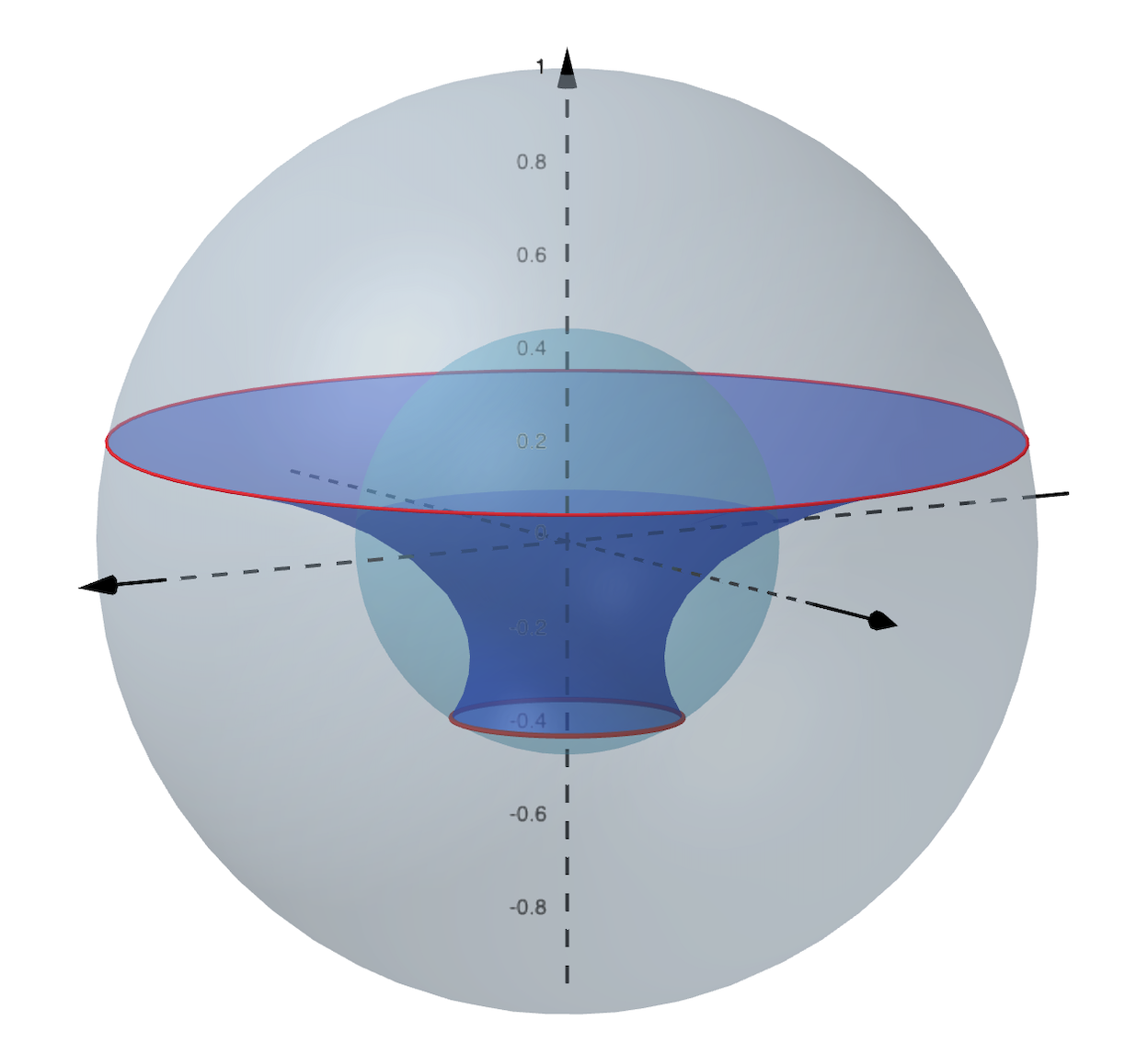}
		\label{fig:ModelCatenoid}
	\end{subfigure}
	\begin{subfigure}[b]{0.45\linewidth}
		\includegraphics[width=\linewidth]{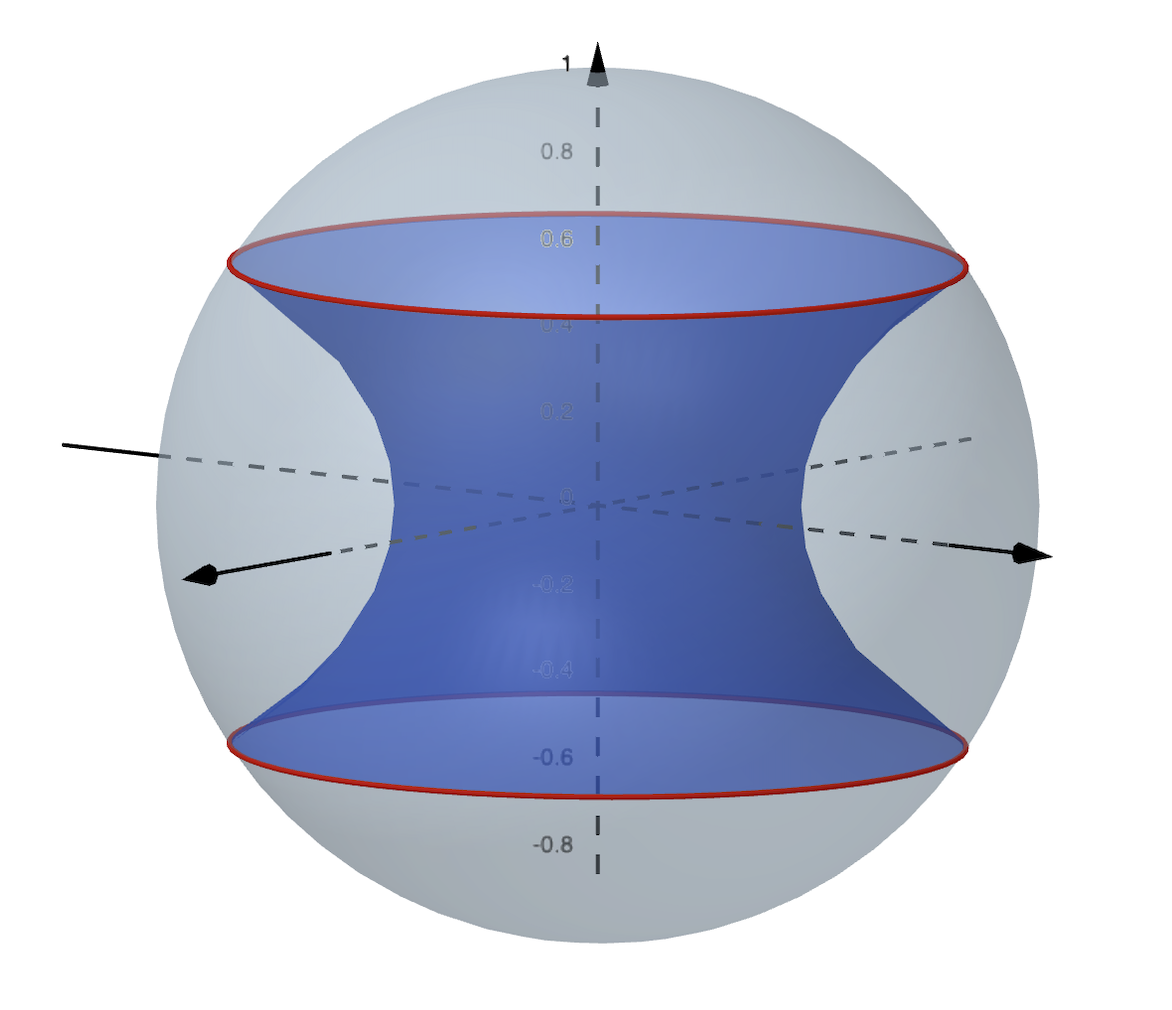}
		\label{fig:CriticalCatenoid}
	\end{subfigure}
	\caption{Here, two model catenoids defined in \eqref{modelCatenoids}. Left: model catenoid with parameter $R=1/2$. It can be seen that $\zeta_+$ is contained in the unit sphere and $\zeta_-$ is contained in a sphere of smaller radius. Right: critical catenoid, which corresponds to $C_0$.}
	\label{fig:ModelCatenoids}
\end{figure}


\section{Normalized Wall Shear Stress}\label{sec_3}
From now on, $\Omega$ will always denote a finite type region in the two-sphere.
\begin{definition}\label{finiteTypeDomain}
	Let $\Omega \subset \mathbb{S}^2$ be a domain with $\mathcal{C}^2$ boundary. Then we say that $\Omega$ is \textup{of finite type} if there exist a finite number of disjoint simply connected domains $D_1, \dots, D_k \subset \mathbb{S}^2$ such that 
	\begin{equation}
		\Omega = \mathbb{S}^2 \setminus \left(\cup_{i=1}^k \textup{cl}( D_i )\right).
	\end{equation}
\end{definition}
If $\Omega$ is a finite type domain with $k \geq 1$ boundary components, we will always write $\partial \Omega = \Gamma_1 \cup \dots \cup \Gamma_k$, being $\Gamma_i=\partial D_i$ for $i \in \{1,\dots, k\}$.
 
Following the ideas in \cite{ABM}, in this section we will classify solutions $(\Omega, \xi)$ to the eigenvalue problem 
\begin{equation}\label{DPS}
	\left\{	\begin{matrix}
		\Delta \xi + 2 \xi=0 & \text{ in } & \Omega ,\\[2mm]
		\xi >0 & \text{ in } & \Omega  ,\\[2mm]
		\xi = 0 & \text{ along } & \partial \Omega,
	\end{matrix}\right.
\end{equation}in terms of its \textit{normalized wall shear stress}, a scale-invariant quantity. By the maximum principle, a solution $(\Omega , \xi)$ to \eqref{DPS} does not have interior minimums. Hence, the set of interior critical points
$$ {\rm Crit}(\xi) := \set{ p \in \Omega \, : \, \, \nabla \xi (p) =0 } $$contains only saddle and maximum points. Given a solution $(\Omega , \xi)$ to \eqref{DPS} we denote by 
$$ {\rm Max} (\xi) := \set{ p \in \Omega \, : \, \, \xi (p) =\xi_{\textup{max}} :=\max_{\textup{cl}(\Omega)} \xi },$$and it is clear that this set must be non-empty. We will refer to ${\rm Max} (\xi) $ as the {\it top level set} of $\xi$.

\begin{remark}\label{RemEmpty}
If a solution $(\Omega , \xi) $ to \eqref{DPS} has a connected component $\mathcal{U} \subset \Omega \setminus \textup{Max} (\xi)$ such that $\textup{cl}(\U) \cap \partial \Omega = \emptyset$, then either $\xi = \xi_{\textup{max}}$ in $\mathcal{U}$ or there exists an interior minimum in $\mathcal{U}$, a contradiction in any case by the maximum principle.
\end{remark}
We also observe a structure result for the top level set:
\begin{lemma}\label{lemmaCritic}
	Let $(\Omega, \xi)$ be a solution to \eqref{DPS}, $\Omega \subset \mathbb{S}^2$ of finite type. Suppose that $\xi$ has infinite points of maximum, i.e., $\# \textup{Max}(\xi)=+\infty$. Then 
	$$\textup{Max}(\xi)=\gamma^0 \cup \gamma^1,$$where $\gamma^0$ is a finite set (could be empty) of points and $\gamma^1$ is a finite set of disjoint analytic closed curves. Moreover, for each $\gamma \in \gamma^1$ it holds that $\Omega \setminus \gamma = \Omega ^\gamma_1 \cup \Omega ^\gamma_2$ with $\partial \Omega ^\gamma_i \cap \partial \Omega \neq \emptyset$ for $i=1,2$.
\end{lemma}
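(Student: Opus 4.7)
The strategy is to upgrade the pointwise structure of $\textup{Max}(\xi)$ via real analyticity, and then combine the Jordan curve theorem with the maximum-principle observation already recorded as Remark~\ref{RemEmpty} to obtain the separation statement.

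First I would record that, since $\partial\Omega$ is analytic and the operator $\Delta+2\,\textup{Id}$ has analytic coefficients, $\xi$ is real analytic on $\textup{cl}(\Omega)$ (elliptic regularity with the Morrey--Nirenberg boundary version quoted in the introduction), and $\textup{Max}(\xi)$ is a compact subset of the interior of $\Omega$ because $\xi\equiv 0$ on $\partial\Omega$ while $\xi_{\max}>0$. At any $p\in\textup{Max}(\xi)$ the Hessian $\nabla^{2}\xi(p)$ is negative semidefinite with trace $\Delta\xi(p)=-2\xi_{\max}<0$; thus its eigenvalues are either both strictly negative (so $p$ is an isolated nondegenerate maximum by the Morse lemma) or they are $0$ and $-2\xi_{\max}$. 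In the degenerate case I would apply the analytic splitting lemma to $u:=\xi_{\max}-\xi\ge 0$ at $p$: there exist analytic local coordinates $(s,t)$ centered at $p$ with $u(s,t)=t^{2}+g(s)$, where $g$ is analytic with $g\ge 0$ and $g(0)=0$. Since a non-negative analytic function of one variable either vanishes identically or has $0$ as an isolated zero, either $p$ is isolated in $\textup{Max}(\xi)$, or locally near $p$ the set $\textup{Max}(\xi)$ coincides with the smooth analytic arc $\{t=0\}$.

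This dichotomy yields a decomposition $\textup{Max}(\xi)=\gamma^{0}\sqcup\gamma^{1}$, with $\gamma^{0}$ the set of isolated maxima and $\gamma^{1}$ a $1$-dimensional embedded real-analytic submanifold of $\Omega$. Compactness of $\textup{Max}(\xi)$ forces $\gamma^{0}$ to be finite and each connected component of $\gamma^{1}$ to be a compact embedded analytic $1$-manifold without boundary, hence an analytic embedded circle. The number of such circles is finite as well: otherwise a subsequence would accumulate at some $p\in\textup{cl}(\Omega)$, and the uniqueness of the local arc supplied by the splitting lemma would force two distinct circles to coincide on a neighborhood of $p$, contradicting their pairwise disjointness. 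The hypothesis $\#\textup{Max}(\xi)=+\infty$ then forces $\gamma^{1}\ne\emptyset$.

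Finally, fix $\gamma\in\gamma^{1}$. By the Jordan curve theorem in $\mathbb{S}^{2}$, the complement $\mathbb{S}^{2}\setminus\gamma$ is the disjoint union of two open topological disks $D_{1},D_{2}$; setting $\Omega^{\gamma}_{i}:=\Omega\cap D_{i}$ one has $\Omega\setminus\gamma=\Omega^{\gamma}_{1}\cup\Omega^{\gamma}_{2}$. If, say, $\overline{\Omega^{\gamma}_{1}}\cap\partial\Omega=\emptyset$, then every connected component $\mathcal{V}$ of $\Omega^{\gamma}_{1}\setminus\textup{Max}(\xi)$ is a connected component of $\Omega\setminus\textup{Max}(\xi)$ whose closure avoids $\partial\Omega$ (using that $\gamma\subset\textup{Max}(\xi)$ together with the fact that $\Omega^{\gamma}_{1}$ is a whole side of $\gamma$), and Remark~\ref{RemEmpty} produces a contradiction; hence $\partial\Omega^{\gamma}_{i}\cap\partial\Omega\ne\emptyset$ for $i=1,2$. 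The delicate point of the argument is the analytic splitting step in the degenerate case, where a zero eigenvalue of the Hessian must be handled carefully to obtain the one-variable reduction $u=t^{2}+g(s)$; once this local description is in place, the rest of the proof is topological bookkeeping plus the maximum-principle content of Remark~\ref{RemEmpty}.
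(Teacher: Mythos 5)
Your proof is correct, and for the structural part it takes a genuinely different route from the paper. The paper obtains $\textup{Max}(\xi)=\gamma^0\cup\gamma^1$ by invoking the Lojasiewicz structure theorem together with \cite[Corollary 3.4]{Chr} (following \cite{ABM}); this a priori allows the curves of $\gamma^1$ to meet at finitely many points, and the intersections are then excluded by noting that at a crossing the Hessian would have to vanish, contradicting $\Delta\xi=-2\xi_{\textup{max}}<0$. You use the same Hessian information at the outset: negative semidefinite with nonzero trace forces rank at least one, and your analytic splitting step --- which in this two-dimensional, corank-one situation is nothing more than the implicit function theorem applied to $\partial_t u=0$ followed by completing the square, both of which preserve analyticity --- yields the local normal form $\xi_{\textup{max}}-\xi=t^2+g(s)$ with $g\ge 0$ analytic, so the ``delicate point'' you flag is indeed sound. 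The resulting dichotomy (isolated point, or locally a single embedded analytic arc) makes $\gamma^1$ a compact embedded $1$-manifold, and disjointness plus finiteness then come for free from the classification of compact $1$-manifolds, with no appeal to stratification results; this is more self-contained, at the price of carrying out the normal form by hand, whereas the paper's route is shorter but leans on external structure theorems. For the separation statement the two arguments coincide in substance: your Jordan-curve formulation, showing that a side $D_1$ with $\overline{\Omega\cap D_1}\cap\partial\Omega=\emptyset$ satisfies $D_1\subset\Omega$ and hence produces a component of $\Omega\setminus\textup{Max}(\xi)$ forbidden by Remark \ref{RemEmpty}, is exactly the paper's non-contractibility argument (one should also record that $D_1\setminus\textup{Max}(\xi)\neq\emptyset$, which follows by analyticity since $\xi$ is not constant).
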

\begin{proof}
We use the same arguments as in the proof of \cite[Theorem D]{ABM}. Solutions to \eqref{DPS} are real analytic (cf. \cite{Nir}), so by the Lojasiewicz structure theorem it follows that $\textup{Max}(\xi)=\gamma^0 \cup \gamma^1$, where $\gamma^0$ is a set of isolated points and $\gamma^1$ is a set of analytic curves. As $\gamma^0 \subset \mathbb{S}^2$ must be compact, it follows that $\gamma^0$ is a finite set of isolated points, and by \cite[Corollary 3.4.]{Chr} we get that $\gamma^1$ must be a finite set of analytic closed curves, possibly intersecting at a finite set of points. Furthermore, the curves of $\gamma^1$ are disjoint since, at the intersection points of two curves, the hessian of $\xi$ must vanish, which contradicts \eqref{DPS}. 
	
Finally, Remark \ref{RemEmpty} implies that $\gamma \in \gamma^1$ is not contractible in $\Omega$, that is, $\Omega \setminus  \gamma$ cannot contain a topological disk component $D$ such that $\partial D \cap \partial \Omega = \emptyset$. Hence, $\Omega \setminus \gamma = \Omega ^\gamma_1 \cup \Omega ^\gamma_2$ with $\partial \Omega ^\gamma_i \cap \partial \Omega \neq \emptyset$ for $i=1,2$, as claimed.
\end{proof}

The above lemma motivates the following 
\begin{definition}\label{DefPartition}
Let $(\Omega , \xi) $ be a solution to \eqref{DPS} and assume that $\gamma \subset \textup{Max}(\xi)$ is a closed embedded curve. Then, $\Omega \setminus \gamma = \Omega_1^\gamma \cup \Omega_2^\gamma$ and we will say that $\Omega_i^\gamma$, $i=1,2$, is the \textit{partition with respect to $\gamma$}.
\end{definition}

Now, we are ready to introduce the definition of \textup{normalised wall shear stress} (NWSS):

	\begin{definition}
		Let $(\Omega, \xi)$ be a solution to \eqref{DPS} and let $\Gamma \in \pi_{0} (\partial \Omega)$ be a connected component of the boundary. We define the \textup{normalised wall shear stress} (NWSS) of $\Gamma$ as 
		\begin{equation}\label{NWS}
			\overline{\tau}(\Gamma):= \frac{\max_{\Gamma} \abs{\nabla \xi}}{\xi_{\textup{max}}}.
		\end{equation}
	If now $\mathcal{U}$ is a connected component of $\Omega \setminus \textup{Max} (\xi)$, $\partial \Omega \cap \textup{cl}(\U) \neq \emptyset$, we define the NWSS of $\mathcal{U}$ as
	\begin{equation}\label{NWSSComponent}
			\overline{\tau}(\mathcal{U}):= \max \left\{ \overline{\tau}(\Gamma) ~\colon~ \Gamma \in \pi_{0} ( \partial \Omega \cap \textup{cl}(\U) ) \right\}.
	\end{equation}
Otherwise, we set $\overline{\tau}(\mathcal{U})=0$.
	\end{definition}

The rigidity results proved in the rest of the paper are derived by comparing geometric quantities of a fixed solution to the eigenvalue problem $(\Omega, \xi)$ with those of a certain model solution $(\Omega_{R}, \xi_{R})$; which motivates the following:

\begin{definition}
We say that a solution $(\Omega , \xi)$ to \eqref{DPS} is equivalent to a model solution $(\Omega _R , \xi _R)$, for some $R \in [0, 1]$, if they differ up to a rotation and a dilation. In such case, we will denote $(\Omega , \xi) \equiv (\Omega _R, \xi _R)$.
\end{definition}

In order to choose the appropriate model solution to compare with $(\Omega, \xi)$, we will use the NWSS defined previously.


\subsection{NWSS on the model solutions}

The NWSS in our model solutions at each of the components of the boundary can be computed in terms of the parameter $R$. In fact, 
\begin{equation}\label{NWSSModelSolutions}
	\overline{\tau}_{\pm} (R) :=\overline{\tau} \left(\Gamma_{\pm} (R)\right)= \pm \frac{1-R^2}{ r_{\pm} (R) \sqrt{1- r_{\pm}(R)^2}}, \quad R \in [0,1)
\end{equation}
where $\Gamma_{+}(R)$ and $\Gamma_{-}(R)$ are defined in \eqref{boundaryComponents}. Moreover, it can be shown that $\overline{\tau}_{-}$ and $\overline{\tau}_{+}$ are invertible functions. Set
\begin{equation}\label{criticalTau}
	\overline{\tau}_{\pm}(0)=\frac{1}{\bar{r}\sqrt{1-\bar{r}^2}}=:\tau_0 >1 \quad \left( \textup{where} \quad 1-\bar{r}\, \textup{arctanh}(\bar{r})=0 \right) ,
\end{equation}where, recall, $ r _+ (0) = - r_- (0) =  \bar r >0$.
\begin{lemma}\label{LemTau}
	The functions $\overline{\tau}_{+}: [0,1) \rightarrow [\tau_0, +\infty)$ and $\overline{\tau}_{-}: [0,1) \rightarrow (1 ,\tau_0 ]$ defined by \eqref{NWSSModelSolutions} are increasing and decreasing respectively.
\end{lemma}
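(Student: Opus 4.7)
The plan is to differentiate $\overline{\tau}_\pm$ with respect to $R$ and reduce the two monotonicity statements to pointwise sign inequalities. Using \eqref{NWSSModelSolutions} together with the explicit formula \eqref{derivH} for $r_\pm'(R)$, a logarithmic differentiation gives
\begin{equation*}
\frac{d}{dR}\log \overline{\tau}_\pm(R)=\frac{2\bigl[\,r_\pm(R)(2r_\pm(R)^2-1)-R(1-R^2)\bigr]}{(1-R^2)^2}.
\end{equation*}
Since $\overline{\tau}_\pm>0$, the sign of $\overline{\tau}_\pm'(R)$ equals the sign of the bracket, and the proof reduces to establishing $r_+(2r_+^2-1)>R(1-R^2)$ on $[0,1)$ (for $\overline{\tau}_+$ increasing) and $r_-(2r_-^2-1)<R(1-R^2)$ on $[0,1)$ (for $\overline{\tau}_-$ decreasing).

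For the first inequality, I would use that $r_+(R)$ is strictly increasing with $r_+(0)=\bar r$ and $\bar r>1/\sqrt 2$ (which follows by monotonicity of $r\mapsto r\,\textup{arctanh}(r)$ together with $\bar r\,\textup{arctanh}(\bar r)=1$). Hence $\phi_+(R):=r_+(2r_+^2-1)$ is strictly increasing from $\bar r(2\bar r^2-1)>0$ to $1$, while $R(1-R^2)$ attains its maximum $2/(3\sqrt 3)$ at $R=1/\sqrt 3$. For $R\ge 1/\sqrt 3$ the inequality is immediate since $\phi_+$ keeps growing while $R(1-R^2)$ decreases; on $[0,1/\sqrt 3]$ it can be verified by lower-bounding $\phi_+'(R)=(6r_+^2-1)\,r_+'(R)$ using \eqref{derivH} and $r_+\ge\bar r$, which yields $\phi_+'(R)\ge (R(1-R^2))'=1-3R^2$ on this subinterval.

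For the second inequality, the argument splits in two regimes. If $r_-(R)^2\ge 1/2$ then $r_-(2r_-^2-1)\le 0\le R(1-R^2)$ and the statement is trivial. Otherwise $|r_-|<1/\sqrt 2$ and, writing $r_-=-|r_-|$, one recasts the inequality as $|r_-|(1-2r_-^2)<R(1-R^2)$ with both sides positive. The plan is then to use the two defining relations $\omega=\textup{arctanh}(R)+R/(1-R^2)$ and $\omega=1/|r_-|-\textup{arctanh}(|r_-|)$ to parametrize both $R$ and $|r_-|$ by the common variable $\omega\in(0,\infty)$, reducing the claim to a one-variable calculus statement which is handled using the strict monotonicity of the maps $R\mapsto \textup{arctanh}(R)+R/(1-R^2)$ and $s\mapsto 1/s-\textup{arctanh}(s)$.

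The ranges are identified from the endpoint behavior: $r_\pm(0)=\pm\bar r$ gives $\overline{\tau}_\pm(0)=\tau_0$, and as $R\to 1^-$ the parameter $\omega\to+\infty$; the asymptotics $1-R^2\sim 1/\omega$, $1-r_+^2\sim C\,e^{-2\omega}$ and $|r_-|\sim 1/\omega$, all obtained from the defining relations, yield $\overline{\tau}_+(R)\to+\infty$ and $\overline{\tau}_-(R)\to 1^+$, closing the ranges $[\tau_0,+\infty)$ and $(1,\tau_0]$. The main obstacle is the second inequality in the regime where $|r_-|$ and $R$ are both small and the two sides vanish at comparable rates; here the parametrization through $\omega$ and a careful use of the implicit equations are essential.
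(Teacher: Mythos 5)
Your reduction is sound and is essentially the paper's: differentiating $\overline{\tau}_\pm$ and substituting \eqref{derivH} shows that $\overline{\tau}_\pm'(R)$ has the sign of $r_\pm(2r_\pm^2-1)-R(1-R^2)$, which is (up to the sign of its transcription) exactly the quantity the paper analyses in \eqref{auxhpm} when it rules out a vanishing derivative by contradiction; your endpoint computations ($\overline{\tau}_\pm(0)=\tau_0$, $\overline{\tau}_+\to+\infty$ and $\overline{\tau}_-\to1^+$ as $R\to1^-$) also match the paper's and are fine. The gaps are in how you close the two pointwise sign inequalities. For $\overline{\tau}_+$, the step ``$\phi_+'(R)\ge 1-3R^2$ on $[0,1/\sqrt3]$ by lower-bounding $\phi_+'=(6r_+^2-1)\,r_+'$ using \eqref{derivH} and $r_+\ge\bar r$'' does not work: $r_+'=2r_+^2(1-r_+^2)/(1-R^2)^2$, and since $x\mapsto 2x(1-x)$ is \emph{decreasing} for $x=r_+^2\ge\bar r^2>1/2$, the hypothesis $r_+\ge\bar r$ only gives an \emph{upper} bound $2r_+^2(1-r_+^2)\le 2\bar r^2(1-\bar r^2)$, so it cannot produce the required lower bound on $\phi_+'$. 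Nor can you shortcut via $\phi_+(0)\ge\max_R R(1-R^2)$: with $\bar r\approx 0.834$ one has $\phi_+(0)=\bar r(2\bar r^2-1)\approx 0.325<2/(3\sqrt3)\approx 0.385$. Making this half rigorous requires an upper bound on $r_+$ over $[0,1/\sqrt3]$ extracted from the implicit relation \eqref{aux1} (or some other comparison tying $r_+$ to $R$), which your proposal does not supply.

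For $\overline{\tau}_-$ you give only a plan, and it is the substantive half of the lemma. After discarding the trivial regime you must prove $|r_-|(1-2r_-^2)<R(1-R^2)$, and as you note both sides vanish at the same first-order rate as $R\to1^-$ (indeed $|r_-|\sim1/\omega$ and $R(1-R^2)\sim1/\omega$), so the inequality is decided by the second-order terms in the expansions of the two inverse maps $\omega\mapsto R$ and $\omega\mapsto|r_-|$. Invoking ``strict monotonicity of $R\mapsto\textup{arctanh}(R)+R/(1-R^2)$ and $s\mapsto 1/s-\textup{arctanh}(s)$'' only guarantees that these inverse maps are well defined; it gives no comparison between $|r_-|(1-2r_-^2)$ and $R(1-R^2)$ along the common parameter $\omega$. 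Until that comparison is actually carried out, the monotonicity of $\overline{\tau}_-$ is not established.
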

\begin{proof}

	First, remember that in the previous section we proved
	\begin{equation}\label{aux1}
			\text{arctanh}( r_{\pm})-\frac{1}{ r_{\pm}}=\text{arctanh}(R)+\frac{R}{1-R^2}, \quad \forall R \in [0,1),
	\end{equation}
	so we have that $ r_{+} (R) \rightarrow 1$ and $ r_{-} (R) \rightarrow 0$ as $R \rightarrow 1$, and then by \eqref{derivH} we get  
	$$ r_{-}: [0,1) \rightarrow [-\bar{r},0) \quad \textup{and} \quad  r_{+}: [0,1) \rightarrow [\bar{r},1)$$ 
	are increasing functions of $R$. 
	\begin{quote}
		{\bf Claim A:} $\overline{\tau}_{+}$ and $\overline{\tau}_{-}$ are monotonic.
	\end{quote} 
	\begin{proof}[Proof of Claim A]
		We will see that the derivative of both functions cannot vanish. By contradiction, suppose there exists $ \bar R \in (0,1)$ such that $\overline{\tau}_{\pm}' ( \bar R)=0$. Then, by \eqref{NWSSModelSolutions}, we get
		$$2 \bar R \, r_{\pm}(\bar R)(1- r_{\pm}(\bar R)^2)+(1-2  r_{\pm}(\bar R)^2) (1-\bar R^2)  r_{\pm}'(\bar R)=0$$and hence
		\begin{equation}\label{auxdhpm}
			\frac{\partial  r_{\pm}}{\partial R} (\bar R) = \frac{2 (1- r_{\pm}(\bar R)^2)  r_{\pm} (\bar R)\bar R  }{(1-\bar R^2)(2  r_{\pm}(\bar R)^2-1)},
		\end{equation}
		so, comparing to \eqref{derivH}, we get
		\begin{equation}\label{auxhpm}
			 r_{\pm}(\bar R)(2  r_{\pm}(\bar R)^2-1)+(1-\bar R^2) \bar R =0.
		\end{equation}
		\begin{itemize}
			\item Assume first that $\bar \tau ' _+(\bar R) =0$. Since $ r_{+}(R)\geq \bar r > 0.8$ for all $R\in [0,1]$, then $2  r_{+}^2 (\bar R)-1 >0$, which contradicts \eqref{auxhpm}.
			
			\item Second, assume that $\bar \tau ' _-(\bar R) =0$. Then, \eqref{auxhpm} implies that $2  r_{-}(\bar R)^2-1 >0$ and, by \eqref{auxdhpm}, we get $\frac{\partial  r_{-}}{\partial R} (\bar R) <0$ since $ r_{-}(\bar R) < 0$, which contradicts \eqref{derivH}.
		\end{itemize}
		
		This proves Claim A.
	\end{proof}
	
	Next, we study the limit of $\overline{\tau}_{\pm}$ as $R$ goes to one. From \eqref{aux1} we obtain 
	$$ \frac{1+ r_{\pm}(R)}{1+R}(1-R)=(1- r_{\pm}(R))\textup{exp}\left( \frac{2}{ r_{\pm}(R)}+\frac{2 R}{1-R^2} \right), \quad \forall R \in [0,1)$$that yields
\begin{equation*}
\lim_{R \to 1^{-}}  (1- r_{\pm}(R))\textup{exp}\left( \frac{2}{ r_{\pm}(R)}+\frac{2 R}{1-R^2} \right) =0.
\end{equation*}
Since $r_+ (R) \to 1 $ as $R \to 1$ and $e^x \geq x^2$ for all $x \geq 0$, we get 
$$ 0=  \lim_{R \to 1^{-}} \abs{(1-r_+ ^2 (R)) {\rm exp}\left( \frac{1}{1-R^2}\right) }\geq \lim_{R \to 1^{-}} \abs{ \frac{1-r_+ ^2 (R)}{(1-R^2)^2} } $$and we conclude that $\overline{\tau}_{+} \rightarrow +\infty$ as $R \rightarrow 1$.

On the other hand, using again \eqref{aux1} and since $r_- (R) \to 0 $ as $R \to 1$ it is easy to conclude that
	\[
	\begin{split}
	1 &=\lim_{R \to 1^-} \frac{\textup{arctanh}(r_- (R))-\frac{1}{r_- (R)}}{\textup{arctanh} (R) + \frac{R}{1-R^2}}= \lim_{R \to 1^-}- \frac{\frac{1}{r_- (R)}}{\textup{arctanh} (R) + \frac{R}{1-R^2}}  \\
	&= \lim_{R \to 1^-} - \frac{1-R^2}{r_- (R) \left((1-R^2) \textup{arctanh}(R) + R \right)}=  \lim_{R \to 1^-}- \frac{1-R^2}{r_- (R)}, 
	\end{split}
	\]
	and by the last equality we conclude that $\overline{\tau}_- (R) \to 1$ as $R \to 1$. This finishes the proof.
\end{proof}

Looking at the previous lemma, one can figure out how to choose a model solution associated to an arbitrary solution to the eigenvalue problem $(\Omega,\xi)$: fixed $\mathcal{U} \subset \Omega \setminus \textup{Max} (\xi)$, if we have that $\overline{\tau}(\mathcal{U})>1$ then there exists a unique model solution $(\Omega_{R},\xi_R)$ such that $\overline{\tau}(\mathcal{U})=\overline{\tau}(\mathcal{V})$ for one of the components $\mathcal{V}$ of $\Omega_R \setminus \textup{Max} (\xi_R)$. This a priori is not a good correspondence, as one could have a solution to the eigenvalue problem with NWSS less or equal to one in a connected component of $ \Omega \setminus \textup{Max} (\xi)$. The key fact is that then one can prove that the solution must be $(\Omega_{1}, \xi_1)$, so we can rule out this case. To prove that we will need two technical results, which will be proven using $P$-functions.

\subsection{Associated $P$-function}
We will find a $P$-function associated with \eqref{DPS}, that is, a subharmonic function associated with a solution to \eqref{DPS}. The next result does not need to impose boundary conditions. 
\begin{proposition}
Let $\Omega \subset \s^2$ be a smooth domain and $\xi : \Omega \to \r $ be a smooth solution to $\Delta\xi + 2\xi=0$. Then, the $P$-function $\mathcal P :=   |\nabla  \xi| ^2+ \xi ^2 $ satisfies 
\begin{equation}\label{DivXi}
\Delta  \mathcal P  \geq 0 \text{ in } \Omega .
\end{equation}
\end{proposition}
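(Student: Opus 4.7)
The plan is a direct Bochner--type calculation on the sphere: split $\Delta\mathcal{P}$ into the contributions of $|\nabla\xi|^2$ and $\xi^2$, use the eigenvalue equation and the curvature of $\mathbb{S}^2$ to reduce everything to a clean expression of the form $2|\nabla^2\xi|^2 - 4\xi^2$, and then close the argument by applying a two-dimensional Cauchy--Schwarz inequality to the Hessian.

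First I would apply the Bochner--Weitzenb\"ock formula to $|\nabla\xi|^2$:
\[
\tfrac{1}{2}\Delta|\nabla\xi|^2 \;=\; |\nabla^2\xi|^2 + \langle\nabla\xi,\nabla\Delta\xi\rangle + \mathrm{Ric}_{\mathbb{S}^2}(\nabla\xi,\nabla\xi).
\]
Since $\mathbb{S}^2$ has constant sectional curvature $1$, its Ricci tensor equals the metric and the last term is simply $|\nabla\xi|^2$; from $\Delta\xi=-2\xi$ I obtain $\nabla\Delta\xi=-2\nabla\xi$, so the inner-product term contributes $-2|\nabla\xi|^2$. Combining,
\[
\tfrac{1}{2}\Delta|\nabla\xi|^2 \;=\; |\nabla^2\xi|^2 - |\nabla\xi|^2.
\]
Separately, the product rule yields $\Delta(\xi^2)=2\xi\Delta\xi+2|\nabla\xi|^2=-4\xi^2+2|\nabla\xi|^2$. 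Adding the two identities, the gradient terms cancel and I get
\[
\Delta\mathcal{P} \;=\; 2|\nabla^2\xi|^2 - 4\xi^2.
\]

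The final step is the pointwise bound $|\nabla^2\xi|^2\geq 2\xi^2$. This follows from the orthogonal decomposition of the Hessian on a surface into its trace-free and trace parts, $\nabla^2\xi = \mathring{\nabla^2\xi} + \tfrac{\Delta\xi}{2}\,g_{\mathbb{S}^2}$, which in dimension two gives
\[
|\nabla^2\xi|^2 \;=\; |\mathring{\nabla^2\xi}|^2 + \tfrac{(\Delta\xi)^2}{2} \;\geq\; \tfrac{(\Delta\xi)^2}{2} \;=\; 2\xi^2.
\]
Substituting this into the previous display yields $\Delta\mathcal{P}\geq 0$, as required.

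There is essentially no analytic obstacle here; the whole proof is a local tensorial identity. What is worth emphasising is a delicate numerological cancellation: subharmonicity relies both on the ambient being two-dimensional (so that the trace-to-full-Hessian ratio is exactly $\tfrac{1}{2}$) and on the eigenvalue being exactly $\lambda=2$, so that the $-|\nabla\xi|^2$ from Bochner, the $+2|\nabla\xi|^2$ from $\Delta(\xi^2)$, and the Ricci curvature of $\mathbb{S}^2$ conspire to eliminate the gradient terms entirely. This precise balance is what makes $\mathcal{P}$ a genuine $P$-function for equation~\eqref{DPS2} and presumably the reason it is chosen for the subsequent rigidity arguments.
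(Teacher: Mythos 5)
Your proof is correct and follows essentially the same route as the paper: Bochner--Weitzenb\"ock on $|\nabla\xi|^2$ combined with the eigenvalue equation, the identity $\Delta(\xi^2)=2\xi\Delta\xi+2|\nabla\xi|^2$, and finally the pointwise inequality $|\nabla^2\xi|^2\geq \tfrac{(\Delta\xi)^2}{2}$ (which the paper invokes as the arithmetic--geometric mean inequality and you justify, equivalently, via the trace-free decomposition of the Hessian). Your closing remark about the exact cancellation of the gradient terms for $\lambda=2$ in dimension two is an accurate observation about why this particular $\mathcal{P}$ works.
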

\begin{proof}
By the Bochner-Weitzenb\"{o}ck formula, a solution $\xi$ to \eqref{DPS} satisfies
\begin{equation*}
\frac{1}{2}\Delta \abs{\nabla\xi}^2 = \abs{\nabla ^2  \xi}^2 - \abs{\nabla\xi}^2,
\end{equation*}and a standard computation shows
\begin{equation*}
\frac{1}{2}\Delta \xi ^2 = \xi \Delta\xi + \abs{\nabla \xi}^2 = - \frac{(\Delta \xi)^2}{2} +  \abs{\nabla \xi}^2.
\end{equation*}

Thus, summing up the above two equations, and the Geometric-Arithmetic Inequality, we obtain 
$$ \frac{1}{2} \Delta \mathcal P =   \abs{\nabla ^2  \xi}^2 - \frac{(\Delta \xi)^2}{2}  \geq 0 ,$$that is, \eqref{DivXi} holds. 
\end{proof}

As a consequence of the above $P$-function, we can obtain a characterization of disk-type solutions to \eqref{DPS} in terms of the normalized wall shear stress (NWSS) of the boundary components. Specifically

\begin{theorem}\label{theor_3.1}
Let $\Omega \subset \s^2$ be a smooth domain and $\xi : \Omega \to \r $ be a non-vanishing smooth solution to $\Delta \xi + 2\xi=0$ in $\Omega$ and $\xi =0$ along $\partial \Omega$. Assume that 
\begin{equation}\label{CondTau}
\bar \tau (\Gamma) \leq 1 \text{ for all } \Gamma  \in \pi _0 (\partial \Omega),
\end{equation}then $\Omega$ is a geodesic disk and $\xi$ is rotationally symmetric. 
\end{theorem}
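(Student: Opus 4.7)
The plan is to exploit the subharmonic $P$-function $\mathcal{P} := |\nabla \xi|^{2} + \xi^{2}$ from the previous proposition together with the hypothesis $\bar{\tau}(\Gamma) \leq 1$ via the maximum principle. Since $\xi$ is non-vanishing in $\Omega$ and vanishes along $\partial\Omega$, it has constant sign, so we may assume $\xi>0$ after possibly changing sign. Then $\xi_{\max}>0$ is attained at some interior point $p_{0}$ where $\nabla\xi(p_{0})=0$, so $\mathcal{P}(p_{0}) = \xi_{\max}^{2}$. Along $\partial\Omega$ we have $\mathcal{P} = |\nabla\xi|^{2}$, and the hypothesis \eqref{CondTau} reads $|\nabla\xi|\leq \xi_{\max}$ on each component, so $\mathcal{P}\leq \xi_{\max}^{2}$ on the whole boundary.

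The first step is to combine the weak and strong maximum principles for the subharmonic $\mathcal{P}$. The boundary bound gives $\mathcal{P}\leq \xi_{\max}^{2}$ throughout $\Omega$, but this bound is attained at the interior point $p_{0}$, so $\mathcal{P}\equiv \xi_{\max}^{2}$ in $\Omega$. Consequently $\Delta\mathcal{P}\equiv 0$, and retracing the Bochner computation used to prove \eqref{DivXi}, every intermediate inequality must be an equality. In particular, equality in the Cauchy--Schwarz bound $|\nabla^{2}\xi|^{2}\geq (\Delta\xi)^{2}/2$, valid in dimension two, forces the Hessian of $\xi$ to be a pure trace:
\[
\nabla^{2}\xi \;=\; \tfrac{1}{2}(\Delta\xi)\, g_{\s^{2}} \;=\; -\,\xi\, g_{\s^{2}} \qquad \text{in } \Omega .
\]

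The final step is to recognize this as an Obata-type equation on $\s^{2}$, whose non-trivial solutions are precisely the restrictions to the sphere of linear functions $p\mapsto \langle v,p\rangle$ on $\r^{3}$. Indeed, along any unit-speed geodesic $\gamma(t)$ of $\s^{2}$ contained in $\Omega$, the function $u(t):=\xi(\gamma(t))$ satisfies $u''+u=0$, hence $u(t) = a\cos t + b\sin t$; matching the values of $\xi$ and $\nabla\xi$ at $p_{0}$ with the three-parameter family of height functions identifies $\xi$ on $\Omega$ with a unique such height function (global identification on the connected domain $\Omega$ then follows either by propagating along geodesics inside $\Omega$ or, more cleanly, by real analyticity of both $\xi$ and the candidate height function). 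The zero set of a non-trivial height function on $\s^{2}$ is a great circle, so $\partial\Omega$ is contained in such a great circle, and since $\xi$ does not vanish in the connected set $\Omega$, $\Omega$ must be exactly one of the two open hemispheres it bounds, i.e.\ a geodesic disk, with $\xi$ rotationally symmetric about the axis spanned by $v$. I expect the only mildly delicate point to be the passage from the local Hessian identity to the global height-function structure on a domain (rather than on the whole sphere), which the geodesic ODE argument handles uniformly.
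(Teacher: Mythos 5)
Your proof is correct and follows essentially the same route as the paper: apply the weak and strong maximum principles to the subharmonic $P$-function $\mathcal P=|\nabla\xi|^2+\xi^2$ using the boundary bound from \eqref{CondTau} and the interior maximum of $\xi^2$, conclude $\mathcal P\equiv\xi_{\textup{max}}^2$, and extract $\nabla^2\xi=-\xi\,g_{\sp^2}$ from the equality case of the trace inequality. The only difference is that you work out the final Obata-type step (identifying $\xi$ with a height function via the geodesic ODE and analyticity, so that $\Omega$ is a hemisphere) explicitly, whereas the paper delegates it to Lemma 2.4 of \cite{CV}; your version of that step is sound.
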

\begin{proof}
Observe that \eqref{CondTau} and $\xi =0 $ along $\partial \Omega$ imply that
$$ {\rm max}_{\partial \Omega} \mathcal P \leq \xi_{\textup{max}}^2 .$$

Let $x \in \Omega$ be a point where $\xi ^2$ attains its maximum, hence $\xi (x) \nabla \xi (x) =0$ and $\xi (x) \neq 0$, thus $|\nabla  \xi| (x) =0$ and $\mathcal P (x) = \xi_{\textup{max}}^2$. Therefore, the strong maximum principle implies that $\mathcal P = \xi_{\textup{max}}^2$ on $\overline \Omega$ and $\nabla^2 \xi = - \xi g_{\sp ^2}$; now it follows that $\Omega $ is a geodesic ball and $\xi $ is rotationally symmetric using the same argument as in the proof of Lemma 2.4 of \cite{CV}. 
\end{proof}

Now we will introduce an energy function along level sets of solutions to \eqref{DPS} and we will show an important property of such energy function when $\overline{\tau} (\mathcal U) \leq 1$. Given  $\mathcal{U} \in \pi_{0} ( \Omega \setminus \textup{Max}(\xi))$, we define the energy function $E :[ 0, \xi_{\textup{max}} ) \to \r $ given by 
 \begin{equation}\label{energyFunction}
 	 E(t) = \frac{1}{\xi_{\textup{max}}^2 -t^2} \int _{\textup{cl}(\U) \cap \set{\xi=t}} |\nabla \xi|.
 \end{equation}

Note that $E$ is continuous in regular values of $[0,\xi_{\textup{max}})$ since the Lebesgue integral is absolutely continuous.
\begin{lemma}\label{LemDecreasing}
Let $(\Omega , \xi )$ be a solution to \eqref{DPS}. Suppose that there exists $\mathcal{U} \in \pi_{0} (\Omega \setminus \textup{Max} (\xi))$ such that $\overline{\tau} (\mathcal{U}) \leq 1$. Then $E$ is non-increasing.
\end{lemma}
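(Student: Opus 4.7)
The strategy is to write $E(t) = f(t)/g(t)$ with $f(t) := \int_{\textup{cl}(\mathcal{U}) \cap \{\xi = t\}} |\nabla \xi|\, d\ell$ and $g(t) := \xi_{\textup{max}}^2 - t^2$, and to show that the numerator of $E'(t)$ collapses to an integral along the level set whose sign is governed by $\mathcal{P} - \xi_{\textup{max}}^2$. The required bound $\mathcal{P} \leq \xi_{\textup{max}}^2$ throughout $\mathcal{U}$ will then follow from the hypothesis $\overline{\tau}(\mathcal{U})\leq 1$, the subharmonicity \eqref{DivXi}, and the maximum principle.

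First, by the coarea formula applied to the superlevel set $\{\xi > t\}\cap \mathcal{U}$ with integrand $|\nabla\xi|^2$, one has $f(t) = -\tfrac{d}{dt}\int_{\{\xi > t\}\cap\mathcal{U}} |\nabla \xi|^2\, dA$ for a.e. $t$. Next, integrating the divergence identity $\operatorname{div}(\xi \nabla\xi) = |\nabla\xi|^2 - 2\xi^2$ over $\{\xi > t\}\cap \mathcal{U}$ yields a Pohozaev-type relation. The boundary of this set, apart from an $\mathcal{H}^1$-negligible set of isolated maximum points from $\gamma^{0}$, consists of a piece of the level curve $\{\xi = t\}$ (with conormal $-\nabla\xi/|\nabla\xi|$) together with curves of $\textup{Max}(\xi)$ where $\nabla\xi \equiv 0$; only the level-curve piece contributes, yielding
\[
\int_{\{\xi>t\}\cap\mathcal{U}} |\nabla\xi|^2\, dA \;=\; 2\int_{\{\xi>t\}\cap\mathcal{U}} \xi^2\, dA \;-\; t\, f(t).
\]
Differentiating this identity (again by coarea on the right-hand side) gives the clean formula $f'(t) = -2t \int_{\{\xi=t\}\cap\textup{cl}(\mathcal{U})} d\ell/|\nabla\xi|$.

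Plugging this into the quotient rule $E' = (f'g - fg')/g^{2}$, and using $\xi = t$ on the level set so that $|\nabla\xi|^2 + t^2 = \mathcal{P}$, the numerator rearranges to
\[
2t \int_{\{\xi=t\}\cap \textup{cl}(\mathcal{U})} \frac{\mathcal{P} - \xi_{\textup{max}}^2}{|\nabla\xi|}\, d\ell.
\]
Thus $E'(t) \leq 0$ reduces to proving $\mathcal{P} \leq \xi_{\textup{max}}^2$ on $\mathcal{U}$. By \eqref{DivXi} the function $\mathcal{P}$ is subharmonic, so by the maximum principle it suffices to check the inequality on $\partial \mathcal{U}$. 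On $\partial \Omega \cap \textup{cl}(\mathcal{U})$ one has $\xi = 0$ and, since $\overline{\tau}(\Gamma) \leq \overline{\tau}(\mathcal{U}) \leq 1$ for every boundary component $\Gamma$ involved, $|\nabla \xi| \leq \xi_{\textup{max}}$, giving $\mathcal{P} \leq \xi_{\textup{max}}^2$; on any curve component of $\textup{Max}(\xi) \cap \textup{cl}(\mathcal{U})$ one has $\xi = \xi_{\textup{max}}$ and $\nabla\xi = 0$, so $\mathcal{P} = \xi_{\textup{max}}^2$ exactly. This confirms the bound and hence $E'(t) \leq 0$ at regular values of $t$.

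The main technical obstacle is that $\{\xi = t\}$ may be singular for critical values of $t$, so the coarea and boundary computations above are only directly justified at regular values. I would handle this by appealing to Sard's theorem to ensure the identity $E'(t)\leq 0$ holds for a.e. $t \in [0, \xi_{\textup{max}})$, and then combine this with the absolute continuity of the map $t \mapsto \int_{\{\xi > t\}\cap \mathcal{U}} |\nabla \xi|^2 \, dA$ to upgrade the a.e. derivative bound to the global monotonicity of $E$.
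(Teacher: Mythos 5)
Your proof is correct, and its core is the same as the paper's: both arguments hinge on establishing $\mathcal P\leq\xi_{\textup{max}}^2$ throughout $\mathcal U$ via the subharmonicity of $\mathcal P$, the maximum principle, and the hypothesis $\overline{\tau}(\mathcal U)\leq 1$ (checking $\mathcal P\leq\xi_{\textup{max}}^2$ on $\partial\Omega\cap\textup{cl}(\mathcal U)$ and $\mathcal P=\xi_{\textup{max}}^2$ on $\textup{Max}(\xi)\cap\textup{cl}(\mathcal U)$), and both reduce monotonicity of $E$ to the sign of $\int_{\{\xi=t\}}(\mathcal P-\xi_{\textup{max}}^2)/|\nabla\xi|$. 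Where you differ is purely in how that level-set formula is obtained: the paper computes ${\rm div}\bigl(\nabla\xi/(\xi_{\textup{max}}^2-\xi^2)\bigr)=\tfrac{2\xi}{(\xi_{\textup{max}}^2-\xi^2)^2}(\mathcal P-\xi_{\textup{max}}^2)$ and integrates over $\{t_1\leq\xi\leq t_2\}$, obtaining $E(t_1)\geq E(t_2)$ directly in integrated form between regular values; you instead derive $f'(t)=-2t\int_{\{\xi=t\}}d\ell/|\nabla\xi|$ from a Pohozaev identity plus coarea and then apply the quotient rule. The two computations produce literally the same expression for $E'$, so nothing is gained or lost mathematically; the paper's integrated version is marginally more robust at the final step, since it bypasses the need to differentiate $E$ and only requires continuity of $E$ (which holds away from the finitely many critical values) to conclude global monotonicity, whereas your a.e.-derivative argument leans on the absolute continuity of $\int_{\{\xi>t\}}|\nabla\xi|^2$, which controls the primitive of $f$ rather than $f$ itself; you should invoke the finiteness of the set of critical values (Soucek--Soucek, as the paper does) to reduce to finitely many intervals on which $E$ is smooth.
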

\begin{proof}
Given $\epsilon>0$, consider the inner region
 \begin{equation}\label{innerRegionEpsilon}
\mathcal U_{\epsilon}:= \left\{ p \in \mathcal U ~\colon~ \xi (p) < \xi_{\textup{max}}-\epsilon \right\} \subset \mathcal U.
 \end{equation}
Since the critical values of $\xi$ in $\mathcal U$ are isolated (cf.  \cite{Souc}), then we can choose $\epsilon$ small enough so that $\xi_\textup{max}-\epsilon$ is a regular value, and hence, the set $\mathcal{U} \cap \{\xi=\xi_{\textup{max}}-\epsilon\}$ is the disjoint union of a finite number of analytic curves by the Lojasiewicz structure theorem. Using \eqref{DivXi} and the maximum principle, we obtain that
\[
\max_{\mathcal{U}_{\epsilon}}\mathcal P \leq \max_{\partial \mathcal{U}_{\epsilon}} \mathcal P .
\]

On the one hand, $|\nabla\xi| \rightarrow 0$ as $\epsilon \rightarrow 0$ yields
\[
\lim_{\epsilon \to 0^+} \max_{\{\xi=\xi_{\textup{max}}-\epsilon\}}\mathcal P= \xi_{\textup{max}}^2. 
\]
On the other hand, from Remark \ref{RemEmpty} and $\overline{\tau} (\mathcal{U}) \leq 1$, we get $|\nabla\xi|^2 + \xi^2= |\nabla \xi|^2 \leq \xi_{\textup{max}}^2$ along $\partial \Omega \cap \textup{cl}(\U)$. Hence, we conclude that
\[
|\nabla \xi|^2 + \xi^2=\mathcal{P} \leq \xi_{\textup{max}}^2 \quad \textup{in} \quad \mathcal{U}. 
\]
Hence, using $\Delta\xi + 2\xi=0$ in $\Omega$, we obtain
\begin{equation*}
\begin{split}
{\rm div} \left( \frac{\nabla \xi }{\xi_{\textup{max}} ^2 - \xi ^2}\right) = \frac{2 \xi}{(\xi_{\textup{max}}^2 -\xi ^2)^2} \left( \mathcal P - \xi_{\textup{max}}^2\right) ,
\end{split}
\end{equation*}
and, since $\xi \geq 0$ in $\Omega$ and $ \mathcal P \leq \xi_{\textup{max}}^2$ in $\mathcal{U}$, we get 
$$  {\rm div}\left( \frac{\nabla  \xi }{\xi_{\textup{max}} ^2 - \xi ^2}\right) \leq 0 \quad \textup{in} \quad \mathcal{U}.$$

Now, we choose regular values $0 \leq  t_1 < t_2 < \xi_{\textup{max}}$ and we integrate the above inequality along the finite perimeter set $\{t_1\leq \xi \leq t_2\}$. Then, applying the Divergence Theorem:
\begin{equation*}
\begin{split}
	0 \geq & \int _{\set{t_1 \leq \xi \leq t _2}}  {\rm div}\left( \frac{\nabla \xi }{\xi_{\textup{max}} ^2 - \xi ^2}\right)                                       \\
	       & = \int _{\set{u=t_1}} \meta{ \frac{\nabla \xi }{\xi_{\textup{max}} ^2 - \xi ^2}}{n}+ \int _{\set{u=t_2} }\meta{ \frac{\nabla \xi }{\xi_{\textup{max}} ^2 - \xi ^2}}{n} \\
	       & = -\frac{1}{\xi_{\textup{max}}^2- t_1^2} \int _{\set{u=t_1}} |\nabla  \xi|+ \frac{1}{\xi_{\textup{max}}^2- t_2^2} \int _{\set{u=t_2}}  |\nabla \xi|                      \\
	       & = -E(t_1) + E(t_2),
\end{split}
\end{equation*}where we have used that $\nu =- \nabla \xi / |\nabla \xi |$ along $\set{u = t_1}$ and $\nu =  \nabla \xi / |\nabla \xi |$ along $\set{u = t_2}$ are the outer unit normal respectively. Therefore, 
$$E(t_1) \geq E(t_2)  \text{ for } 0 \leq  t_1 < t_2 < \xi_{\textup{max}} \text{ regular values}.$$ 

Since $E$ is continuous and the set of critical values is finite, then $E$ is a non-increasing function.
\end{proof}

\subsection{Expected critical height}

As said above, the NWSS on the model solutions will allow us to define an expected value for any solution to \eqref{DPS}. Specifically:

\begin{definition}\label{DefCriticalHeight}
Let $(\Omega, \xi)$ be a solution to \eqref{DPS} and $\mathcal U \in \pi _0 (\Omega \setminus \textup{Max} (\xi))$. Set $\tau_0$ as in \eqref{criticalTau}, if $\Omega$ is not a topological disk then we define the \textup{expected critical height of }$\mathcal U$ as follows:

\begin{itemize}
	\item if $\overline{\tau} (\mathcal U) < \tau_0$, we set
	\begin{gather}\label{sutilezaCriticalHeight}
		\bar R(\mathcal U)= \overline{\tau}_{-}^{-1} \left( \overline{\tau} (\mathcal U) \right),
	\end{gather}
	\item  if $\overline{\tau} (\mathcal U) \geq \tau_0$, we set
	\begin{gather}
		\bar R(\mathcal U)= \overline{\tau}_{+}^{-1} \left( \overline{\tau} (\mathcal U) \right).
	\end{gather}
\end{itemize}
If $\Omega$ is a topological disk, then we set $\bar R(\mathcal U)=1$.
\end{definition}

Observe that in \eqref{sutilezaCriticalHeight} we are assuming implicitly that, if $\Omega$ is not a topological disk, then $\overline{\tau} (\mathcal U)>1$ for each connected component $\mathcal U$ of $\Omega \setminus \textup{Max} (\xi)$. In order to prove that Definition \ref{DefCriticalHeight} is consistent we first show:

\begin{theorem}\label{ThmAux}
Let $( \Omega ,\xi )$ be a solution to \eqref{DPS} and $\mathcal U \in \pi_{0} (\Omega \setminus \textup{Max} (\xi))$. If $\overline{\tau}(\mathcal U) \leq 1$, then $(\Omega, \xi) \equiv (\Omega_1, \xi_{1})$.
\end{theorem}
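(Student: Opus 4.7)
The plan is to show that the $P$-function $\mathcal P := |\nabla \xi|^2 + \xi^2$ must be identically equal to $\xi_{\textup{max}}^2$ on all of $\Omega$, from which one reads off that $\xi$ is the restriction of a linear function on $\mathbb R^3$ to $\mathbb S^2$ and $(\Omega,\xi) \equiv (\Omega_1,\xi_1)$. To begin, following the same argument used inside the proof of Lemma~\ref{LemDecreasing}, the hypothesis $\overline{\tau}(\mathcal U) \leq 1$ together with the subharmonicity $\Delta \mathcal P \geq 0$ in $\Omega$ (cf.~\eqref{DivXi}) already yields $\mathcal P \leq \xi_{\textup{max}}^2$ throughout $\mathcal U$: on $\partial \mathcal U \cap \partial \Omega$ one has $\mathcal P = |\nabla \xi|^2 \leq \xi_{\textup{max}}^2$ by the definition of $\overline{\tau}(\mathcal U)$, while approaching $\textup{Max}(\xi)$ along regular level sets $\{\xi = \xi_{\textup{max}} - \epsilon\}$ the function $\mathcal P$ tends to $\xi_{\textup{max}}^2$, so the maximum principle on the exhausting subdomains $\mathcal U_\epsilon$ delivers the bound after letting $\epsilon \to 0$.

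The crux is to upgrade this inequality to an equality via the Hopf boundary point lemma. Since $\mathcal U$ is a proper component of the open set $\Omega \setminus \textup{Max}(\xi)$ (otherwise $\textup{Max}(\xi) = \emptyset$, impossible), and $\Omega$ is connected, we have $\textup{cl}(\mathcal U) \cap \textup{Max}(\xi) \neq \emptyset$. Fix any $p \in \textup{cl}(\mathcal U) \cap \textup{Max}(\xi)$. Then $\xi(p) = \xi_{\textup{max}}$ and $\nabla \xi(p) = 0$, whence $\nabla \mathcal P(p) = 2 \nabla^2 \xi(p) \nabla \xi(p) + 2\xi(p) \nabla \xi(p) = 0$, while $\mathcal P (p) = \xi_{\textup{max}}^2$ equals the maximum of $\mathcal P$ on $\textup{cl}(\mathcal U)$. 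If $p$ lies on a smooth arc of $\textup{Max}(\xi)$, the interior sphere condition at $p$ from within $\mathcal U$ is immediate; if $p$ is instead isolated in $\textup{Max}(\xi)$, then $\mathcal U$ contains a punctured neighborhood of $p$, and any small ball $B = B(q,r) \subset \mathcal U$ with $|q-p|=r$ realizes the same condition. Hopf's lemma applied to $\mathcal P$ in $B$ therefore forces $\mathcal P \equiv \xi_{\textup{max}}^2$ on $B$, since the alternative would require a strictly positive outward normal derivative at $p$, contradicting $\nabla \mathcal P(p)=0$. Real analyticity of $\xi$ (cf.~\cite{Nir}) and connectedness of $\Omega$ then propagate $\mathcal P \equiv \xi_{\textup{max}}^2$ to all of $\Omega$.

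Having $\mathcal P$ constant gives $\Delta \mathcal P \equiv 0$, and the chain of inequalities producing $\Delta \mathcal P \geq 0$ (Bochner-Weitzenb\"{o}ck plus AM-GM on the two eigenvalues of $\nabla^2 \xi$ in dimension two) must all be equalities, which forces $\nabla^2 \xi = -\xi\, g_{\mathbb S^2}$ on $\Omega$. The only $\mathcal C^2$ solutions of this Obata-type equation on $\mathbb S^2$ are restrictions of linear functions from $\mathbb R^3$, so $\xi(x) = \langle a,x \rangle$ for some $a \in \mathbb R^3$. The conditions $\xi > 0$ in $\Omega$ and $\xi = 0$ along the $\mathcal C^2$ boundary $\partial \Omega$ identify $\partial \Omega$ with the great circle $\{\langle a, \cdot \rangle = 0\}$ and $\Omega$ with the corresponding open hemisphere, so after a rotation and a dilation one obtains precisely $(\Omega,\xi) \equiv (\Omega_1,\xi_1)$. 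The main obstacle I foresee is the careful verification of the interior sphere condition at an isolated point of $\textup{Max}(\xi)$; this is tractable in dimension two thanks to the Morse-type local structure of $\xi$ near such a non-degenerate maximum, together with the fact that the trace of the Hessian there equals $-2\xi_{\textup{max}} < 0$.
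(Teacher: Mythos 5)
Your argument is correct, and it takes a genuinely different route from the paper's. The paper splits into two cases according to the structure of $\textup{Max}(\xi)$ (Lemma \ref{lemmaCritic}): if $\textup{Max}(\xi)$ contains no curves, then $\Omega\setminus\textup{Max}(\xi)$ is connected, the hypothesis $\overline{\tau}(\mathcal U)\leq 1$ controls $|\nabla\xi|$ on all of $\partial\Omega$, and Theorem \ref{theor_3.1} applies (strong maximum principle for $\mathcal P$ at an interior maximum of $\xi^2$); if $\textup{Max}(\xi)$ contains an analytic closed curve meeting $\textup{cl}(\mathcal U)$, the paper instead derives a contradiction by combining the monotonicity of the energy $E(t)$ from Lemma \ref{LemDecreasing} with the Lojasiewicz gradient inequality, which forces $E(0)=+\infty$. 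You run a single, more elementary argument: the interior bound $\mathcal P\leq\xi_{\textup{max}}^2$ in $\mathcal U$ (which is exactly the first half of the proof of Lemma \ref{LemDecreasing}) plus the Hopf boundary point lemma at a point $p\in\textup{cl}(\mathcal U)\cap\textup{Max}(\xi)$, where $\mathcal P(p)=\xi_{\textup{max}}^2$ and $\nabla\mathcal P(p)=0$, which forces $\mathcal P\equiv\xi_{\textup{max}}^2$ on an interior ball and hence, by real analyticity, on all of $\Omega$; the Obata equation $\nabla^2\xi=-\xi\, g_{\sp^2}$ then finishes as in Theorem \ref{theor_3.1}. The interior-ball verification you flag as the main obstacle is in fact unproblematic: by the Lojasiewicz structure theorem every point of $\textup{Max}(\xi)$ is either isolated or lies on an embedded analytic arc, and in either case a tangent disk contained in $\mathcal U$ exists exactly as you describe, since a small punctured (resp. slit) neighborhood of $p$ avoids the rest of $\textup{Max}(\xi)$ and its relevant component must belong to $\mathcal U$. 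What your route buys is brevity and uniformity --- no case distinction, no Lojasiewicz gradient inequality, no energy function; what the paper's route buys is that the energy-function and Lojasiewicz machinery set up here is reused throughout Sections \ref{sec_4} and \ref{sec_5}, so the longer proof carries no extra cost in context.
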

\begin{proof}
By Lemma \ref{lemmaCritic}, $\textup{Max} (\xi)= \gamma^0 \cup \gamma^1 $, where $\gamma^1$ is the disjoint union of a finite number of analytic curves and $\gamma^0$ is a finite set of isolated points.

We first show the result if $\gamma^1 = \emptyset$. In such a case, $\Omega \setminus \textup{Max} (\xi)$ would be connected and Theorem \ref{theor_3.1} implies that $(\Omega, \xi)$ is a rotationally symmetric solution on a disk, so it must be $(\Omega_1, \xi_1)$ after a rescaling. Hence, we can assume that $\gamma^1$ is non-empty and is in contact with $\U$.

Since $\gamma^1 \cap \textup{cl}(\U)$ is a set of analytic closed curves, by the Lojasiewicz inequality (see \cite[Theorem 2.1]{Chr}), there exists a neighborhood $V$ of $\gamma^1 \cap \textup{cl}(\U)$, $V \cap \gamma^0 = \emptyset$, and two constants $c>0$ and $1/2 \leq \theta < 1$ such that 
$$\xi_{\textup{max}}-\xi < 1 \text{ and } \abs{\nabla \xi }(x) \geq c (\xi_{\textup{max}}-\xi (x))^{\theta}, \quad \forall x \in V.$$

Then, since $\{\xi=t\} \cap \textup{cl}(\U) \subset V$ as $t \rightarrow \xi_{\textup{max}}$, using the energy function \eqref{energyFunction} we get the following chain of inequalities
	\begin{equation*}
		\begin{split}
			\frac{1}{\xi_{\textup{max}}^2} \int _{\textup{cl}(\U) \cap \set{\xi=\xi_{\textup{max}}}} |\nabla \xi|  = E(0) \geq ^{\text{Lemma } \ref{LemDecreasing}}E(t) & = \frac{1}{\xi_{\textup{max}}^2 -t^2} \int _{\textup{cl}(\U) \cap \set{\xi=t}} |\nabla  \xi|  \\
			& \geq \frac{1}{\xi_{\textup{max}} -t} \int _{\textup{cl}(\U) \cap \set{\xi=t}} |\nabla  \xi|   \\
			& \geq \frac{c}{(\xi_{\textup{max}}-t)^{1-\theta}(\xi_{\textup{max}}+t)} \abs{\{\xi=t\} \cap\textup{cl}(\U)},
		\end{split}
	\end{equation*}
We can assume that all $t$'s in the previous inequalities are regular values of $\xi$, as we know by \cite[Theorem 1]{Souc} that if $Z$ is the set of critical points of $\xi$ then $\xi (K \cap Z)$ is finite for any compact set $K \subset \Omega$. Then $\abs{\{\xi=t\} \cap \textup{cl}(\U)}>0$. Letting $t \rightarrow \xi_{\textup{max}}$ we obtain that $E(0)=+\infty$, a contradiction.  
\end{proof}

Finally, we are ready to prove that Definition \ref{DefCriticalHeight} is well-posed:

\begin{theorem}\label{theor_CriticalHeight}
Let $( \Omega ,\xi )$ be a solution to \eqref{DPS} and $\mathcal U \in \pi_{0} (\Omega \setminus \textup{Max} (\xi))$. Then, the expected critical height of $\mathcal U$, $\bar R(\mathcal U)$, is well defined and takes values in $[0,1]$. Moreover, $\bar R(\mathcal U)=1$ if, and only if, $(\Omega, \xi) \equiv (\Omega_1, \xi_{1})$.
\end{theorem}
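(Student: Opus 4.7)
The plan is to split the proof into two parts: establishing $\bar R(\mathcal U)\in[0,1]$ (well-definedness) and proving the characterization $\bar R(\mathcal U)=1 \Leftrightarrow (\Omega,\xi)\equiv(\Omega_1,\xi_1)$. For well-definedness, if $\Omega$ is a topological disk then $\bar R(\mathcal U)=1$ directly by Definition \ref{DefCriticalHeight}. Otherwise, I would claim $\overline{\tau}(\mathcal U)>1$ by contraposition from Theorem \ref{ThmAux}: if $\overline{\tau}(\mathcal U)\leq 1$ held, that theorem would yield $(\Omega,\xi)\equiv(\Omega_1,\xi_1)$, making $\Omega$ a hemisphere (hence a topological disk), a contradiction. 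Given $\overline{\tau}(\mathcal U)>1$, Lemma \ref{LemTau} tells us that $\overline{\tau}_-\colon[0,1)\to(1,\tau_0]$ and $\overline{\tau}_+\colon[0,1)\to[\tau_0,+\infty)$ are monotone bijections, so Definition \ref{DefCriticalHeight} unambiguously assigns $\bar R(\mathcal U)\in[0,1)$ via the appropriate inverse. Combining both cases gives $\bar R(\mathcal U)\in[0,1]$ in general.

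For the equivalence, the implication $(\Omega,\xi)\equiv(\Omega_1,\xi_1)\Longrightarrow\bar R(\mathcal U)=1$ is immediate since $\Omega_1$ is a hemisphere. Conversely, the well-definedness analysis forces $\Omega$ to be a topological disk whenever $\bar R(\mathcal U)=1$, so it remains to prove that every solution $(\Omega,\xi)$ of \eqref{DPS} with $\Omega$ simply connected is equivalent to $(\Omega_1,\xi_1)$. A useful first reduction is to show that $\textup{Max}(\xi)$ contains no closed curve $\gamma$ in this case: such a $\gamma$ would bound a topological disk $D\subset\Omega$ on which $v=\xi_{\textup{max}}-\xi\geq 0$ is strictly subharmonic ($\Delta v=2\xi>0$) and vanishes on $\partial D$, so the maximum principle would force $v\equiv 0$, contradicting $\Delta v>0$. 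Real analyticity of $\xi$ together with $\xi|_{\partial\Omega}=0<\xi_{\textup{max}}$ then ensures $\textup{Max}(\xi)$ is a finite set of isolated points, so $\xi$ is a first Dirichlet eigenfunction realizing $\lambda_1(\Omega)=2$.

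The hard part will be the closing step---proving that a simply connected spherical domain with $\lambda_1(\Omega)=2$ must be a hemisphere---since the $P$-function method behind Theorem \ref{theor_3.1} only yields $\overline{\tau}(\Gamma)\geq 1$, the reverse of what that theorem requires. I plan to resolve this via the sharp spherical Faber--Krahn inequality together with strict monotonicity of $\lambda_1$ under domain inclusion: for $|\Omega|\leq 2\pi$ one has $\lambda_1(\Omega)\geq\lambda_1(B)$ with equality only when $\Omega$ is a geodesic ball of the same area, and among balls $\lambda_1=2$ only for the hemisphere; for $|\Omega|>2\pi$, a simply connected $\Omega$ must contain some hemisphere $H$, giving $\lambda_1(\Omega)<\lambda_1(H)=2$ and ruling this case out. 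Thus $\Omega$ must be a hemisphere, and uniqueness of the first Dirichlet eigenfunction up to scale yields $\xi=c\xi_1$ for some $c>0$, completing $(\Omega,\xi)\equiv(\Omega_1,\xi_1)$.
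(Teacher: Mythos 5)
Your first part (well-definedness) is essentially the paper's own argument: contraposition from Theorem \ref{ThmAux} shows that $\overline{\tau}(\mathcal U)>1$ whenever $\Omega$ is not a topological disk, and Lemma \ref{LemTau} then produces a unique $\bar R(\mathcal U)\in[0,1)$ via the inverses of $\overline{\tau}_{\pm}$. That part is fine.

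The gap is in your converse direction of the ``moreover''. You reduce it to the claim that every positive solution of \eqref{DPS} on a simply connected domain is equivalent to $(\Omega_1,\xi_1)$, and try to close this with Faber--Krahn. Two problems. First, the step ``$|\Omega|>2\pi$ and $\Omega$ simply connected $\Rightarrow$ $\Omega$ contains a hemisphere'' is false: remove from $\mathbb{S}^2$ a thin tubular neighborhood of an arc $A$ covering more than half of a great circle $C$; the complement is simply connected with area arbitrarily close to $4\pi$, yet every closed hemisphere contains at least half of $C$ (in length) and therefore meets $A$. Second, and more seriously, the claim you are trying to prove is itself false: Faber--Krahn is only rigid for $|\Omega|\le 2\pi$, and by continuity and strict domain monotonicity of $\lambda_1$ along an increasing family of non-round simply connected domains exhausting $\mathbb{S}^2$ minus a point, there exist non-round simply connected domains with $\lambda_1(\Omega)=2$, whose positive first eigenfunctions solve \eqref{DPS}. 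So ``$\Omega$ a topological disk $\Rightarrow(\Omega,\xi)\equiv(\Omega_1,\xi_1)$'' cannot be established. The paper never attempts it: its proof reads ``$\bar R(\mathcal U)=1$'' as ``$\overline{\tau}(\mathcal U)\in[0,1]$'', and the rigidity in that case is exactly Theorem \ref{ThmAux}, whose P-function and energy-monotonicity argument crucially uses the boundary gradient bound $\max_{\partial\Omega\cap\textup{cl}(\mathcal U)}\abs{\nabla\xi}\le\xi_{\textup{max}}$ --- information a bare disk-type solution does not carry. (You are right that the literal wording of Definition \ref{DefCriticalHeight}, ``$\Omega$ a disk $\Rightarrow\bar R=1$'', combined with the theorem would assert precisely the false statement you tried to prove; that is an imprecision on the paper's side, but the only viable route to the rigidity is through $\overline{\tau}(\mathcal U)\le 1$ and Theorem \ref{ThmAux}, not through a classification of disk-type solutions.)
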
 
\begin{proof}
First, from Remark \ref{RemEmpty}, any $\mathcal U \in \pi_{0} (\Omega \setminus \textup{Max} (\xi))$ must satisfy $\partial \Omega \cap \overline{\mathcal U} \neq \emptyset$ and, from \eqref{NWS}, $\overline \tau (\mathcal U) \geq 0$. Observe that, by definition, $R(\mathcal U) = 1$ if, and only if, $\overline \tau (\mathcal U) \in [0,1]$. Then, Theorem \ref{ThmAux} implies that $(\Omega, \xi)=(\Omega_1, \xi_{1})$ up to a rotation and a dilation. Therefore, there exists a bijective correspondence between $\bar R(\mathcal U) \in [0 ,1 )$ and $\bar \tau (\mathcal U) \in (1, + \infty)$ by Lemma \ref{LemTau}, which finishes the proof. 
\end{proof}


\section{Comparison geometry}\label{sec_4}
Now, using the expected critical height we can associate to each general solution $(\Omega, \xi)$ one of the model solutions defined in Section \ref{sec_2}.
\begin{definition}\label{associatedSolution}
	Let $(\Omega, \xi)$ be a solution to \eqref{DPS}, $\mathcal U \in \pi _0 \left(\Omega \setminus \textup{Max} (\xi) \right)$ and set $\bar R :=  R(\mathcal U)$ the expected critical height of $\mathcal U$. Then we say that $(\Omega_{\bar R}, \xi_{\bar R})$ is the \textup{associated model solution} to $\xi$ inside $\mathcal{U}$.
\end{definition} 
\begin{remark}\label{remarkNotation}
	From now on, we omit the parameter $\bar R$ and we only write $(\bar \Omega, \bar \xi ) := (\Omega_{\bar R}, \xi_{\bar R})$, we also denote the extremal of the interval of definition and the scale parameter of $\bar \xi$ as $\bar r_{\pm }:= r_{\pm}(\bar R)$ and $\bar \alpha=\alpha (\bar R)$ respectively, since $\bar R$ is already fixed. 
\end{remark}
In this section, we will construct a function to compare the level sets of $\xi$ with those of our model solution. To construct the comparison function, we need our general function $\xi$ to be normalized.

\begin{definition}\label{normalizedSolution}
Let $(\Omega, \xi)$ be a solution to \eqref{DPS}, $\mathcal U \in \pi _0 \left(\Omega \setminus \textup{Max} (\xi) \right)$ and set $\bar R :=  R(\mathcal U)$ the expected critical height of $\mathcal U$. Then we will say that the solution $\xi$ is \textup{normalized} if $\xi_{\textup{max}}=(\xi_{R})_{\textup{max}}$.
\end{definition}
Let us suppose that $\xi$ is normalized, consider the function $F: [0, \xi_{\textup{max}}] \times [\bar r_{-},\bar r_{+}] \to \r$ given by 
$$ F(\xi , r) = \xi - \bar \alpha(1-r \, {\rm arctanh} (r)+\omega r) ,$$ where $\omega$ is given by \eqref{rparameter} and $\bar \alpha$ is given in Proposition \ref{prop_2.1}. Hence
$$\bar \alpha^{-1} \frac{\partial F}{\partial r} (r)=    {\rm arctanh} (r) + \frac{r}{1-r^2}-r.$$and therefore $ \frac{\partial F}{\partial r} =  0 $ if, and only if, $r =\bar R$. As a consequence of the Implicit Function Theorem, there exist two smooth functions
$$\chi _{-} :  [0, \xi _{\max} ] \to [\bar r_{-},\bar R] \quad \textup{and} \quad \chi _{+} :  [0, \xi _{\max} ] \to [\bar R, \bar r_{+}] $$ such that $$ F(\xi , \chi _\pm (\xi)) =0 \quad \textup{for all} \quad \xi \in [0 , \xi_{\textup{max}}].$$ 

Now we can define a \textit{pseudo-radial function} following \cite[Definition 3]{ABM}.

\begin{definition}\label{psudoRadialFunctions}
	Let $\mathcal U \subset \Omega \setminus \textup{Max}(\xi)$ be a connected component and set $\tau_0$ as in \eqref{criticalTau}. Then: 
	\begin{itemize}
		\item If $\overline{\tau}(\mathcal U) \geq \tau_0$, we define the pseudo-radial function $\Psi_{+}$ associated with the region $\mathcal U$ as 
		\begin{equation}\label{pseudo-positive}
			\begin{split}
				\Psi_{+}:  \, & \mathcal U \to [\bar R, \bar r_{+}] \\
				& \, p \,\, \mapsto \Psi_{+} (p):= \chi_{+}\left(\xi (p)\right).
			\end{split}
		\end{equation}
       \item If $\overline{\tau}(\mathcal U) < \tau_0$, we define the pseudo-radial function $\Psi_{-}$ associated with the region $\mathcal U$ as 
\begin{equation}\label{pseudo-negative}
	\begin{split}
		\Psi_{-}:  \, & \mathcal U \to [\bar r_{-},\bar R] \\
		& \, p \,\, \mapsto \Psi_{-} (p):= \chi_{-}\left(\xi (p)\right).
	\end{split}
\end{equation}
\end{itemize}
\end{definition}

\begin{remark}
Note that if $\mathcal U \subset \Omega \setminus \textup{Max}(\xi)$ is such that $\overline{\tau}(\mathcal{U})=\tau_0$ then we can define the pseudo-radial function associated to $\mathcal{U}$ as $\Psi_{+}: \mathcal{U} \to [0, \bar r]$ or as $\Psi_{-}: \mathcal{U} \to [-\bar r,0]$. We can use both definitions because the model solution $(\Omega_0, \xi_0)$ is symmetric with respect to the meridian $\{r=0\}$ in cylindrical coordinates in the sphere, and both components of $\Omega_0 \setminus \textup{Max}(\xi_0)=\Omega \setminus \{r=0\}$ have the same NWSS equal to $\tau_0$. To fix one, we choose to define the pseudo-radial function as \eqref{pseudo-positive} when $\overline{\tau}(\U)=\tau_0$.
\end{remark}

\begin{remark}
	In order to perform some formal computations using the pseudo-radial function, it is important to do previous considerations about the notation. First, we will denote $\Psi_{\pm}=\Psi$ and $\chi_{\pm}=\chi$ and we will do the computations considering both possibilities at the same time. On the other hand, note that $\chi$ is a function of $\xi$ as a real parameter, while $\Psi$ is a function defined on a domain in the sphere. Also, we denote the derivatives with respect to $\xi$ with a dot. For example, $\dot{\chi}$ will denote the derivative of $\chi$ with respect to the parameter $\xi$.
\end{remark}

A straightforward computation gives
$$ 0 = \frac{d F}{d \xi } (\xi , \chi _\pm (\xi)) = \frac{\partial F}{\partial \xi } (\xi ,\chi _\pm (\xi)) + \frac{\partial F}{\partial r } (\xi , \chi _\pm (\xi)) \dot{\chi}_{\pm}  (\xi) $$and 
$$ 0 = \frac{d^2 F}{d \xi^2 } (\xi , \chi _\pm (\xi)) = \frac{\partial ^2 F}{\partial r^2 } (\xi , \chi _\pm (\xi)) \dot{\chi}_{\pm}  (\xi) ^2 + \frac{\partial F}{\partial r } (\xi , \chi _\pm (\xi))\ddot{\chi}_{\pm}(\xi) $$where we have used that $\frac{\partial F}{\partial \xi } (\xi ,\chi _\pm (\xi)) = 1$ and $\frac{\partial^2 F}{\partial \xi ^2} (\xi , \chi _\pm (\xi)) = 0 =\frac{\partial ^2 F}{\partial r \partial \xi } (\xi ,\chi _\pm (\xi))$. We get 
\begin{equation}\label{primeraDerivadaChi}
	\dot{\chi}_{\pm}  (\xi) = -\frac{\partial F}{\partial r } (\xi , \chi _\pm (\xi))  ^{-1}=\mp \sqrt{1-\chi _\pm (\xi)^2} \abs{\nabla \bar \xi}(\chi _\pm (\xi))^{-1}
\end{equation}
and 
\begin{equation}\label{segundaDerivadaChi}
	\ddot{\chi}_{\pm}(\xi) = - \frac{\partial ^2 F}{\partial r^2 } (\xi , \chi _\pm (\xi)) \frac{\partial F}{\partial r } (\xi , \chi _\pm (\xi))^{-3}= \mp \frac{2\bar \alpha \abs{\nabla \bar \xi}(\chi _\pm (\xi))^{-3}}{\sqrt{1-\chi _\pm (\xi)^2}}.
\end{equation}
We can also compute
$$ \nabla  \Psi= \dot{\chi} \nabla \xi \, \text{ and } \, \nabla ^2  \Psi = \dot{\chi} \nabla ^2 \xi + \ddot{\chi} d \xi \otimes d \xi .$$

\subsection{Gradient estimates}
First, we will use the pseudo-radial function to compare the norm of the gradient of the solution $\xi$ along level sets with that of the model solution $\bar \xi$. Suppose that the function $\xi$ is normalized and consider the comparison function 
\begin{equation}\label{normModelGradient}
 W_{\bar R} = \abs{\nabla \bar \xi}^2 \circ \Psi= \frac{1-\Psi^2}{\Psi^2} \left(\xi-\frac{\bar  \alpha}{1-\Psi^2}\right)^2,
\end{equation}
and set
\begin{equation}\label{normGradient}
W= \abs{\nabla \xi}^2 \text{ in } \mathcal U. 
\end{equation}

Our aim is to prove the following result:
\begin{theorem}\label{theor_Gradient}
	Let $(\Omega, \xi)$ be a solution to \eqref{DPS}, $\mathcal U \in \pi _0 \left(\Omega \setminus \textup{Max}(\xi) \right)$ and set $\bar R = R (\mathcal U) \in [0,1)$ the expected critical height of $\mathcal U$. Let $(\bar \Omega, \bar \xi)$ be its associated model solution and assume that $\xi$ is normalized. Then, it holds
	\[
	W(p) \leq W_{\bar R}(p) \, \text{ for all } p \in \mathcal U,
	\]
moreover, if the equality holds at one single point of $\mathcal U$, then $(\Omega, \xi) \equiv (\bar\Omega, \bar \xi)$.
\end{theorem}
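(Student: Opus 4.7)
The plan is to establish $W \leq W_{\bar R}$ via a maximum-principle argument applied to the comparison function $P := W - W_{\bar R}$ on $\mathcal{U}$. The key structural advantage is that $W_{\bar R} = G(\xi)$, where $G(t) := |\nabla\bar\xi|^2(\chi(t))$, so that $W_{\bar R}$ depends on $\xi$ alone and its first and second derivatives can be computed explicitly from \eqref{primeraDerivadaChi}, \eqref{segundaDerivadaChi} and Proposition \ref{propertiesXi}.

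First I would check that $P \leq 0$ on $\partial \mathcal{U}$. Along $\partial \Omega \cap \textup{cl}(\mathcal{U})$, the condition $\xi = 0$ places $\Psi$ at its extremal value $\bar r_{\pm}$, where $W_{\bar R} = (\bar\tau(\mathcal{U})\,\xi_{\textup{max}})^2$ by the defining identity of $\bar R$ and Proposition \ref{propertiesXi}; since $\bar\tau(\mathcal{U})$ is by definition the supremum of $\bar\tau(\Gamma)$ over the accessible boundary components, $W \leq W_{\bar R}$ on each such $\Gamma$. Along $\textup{Max}(\xi) \cap \textup{cl}(\mathcal{U})$ one has $W = 0$ by construction while $W_{\bar R} = |\nabla\bar\xi|^2(\bar R) = 0$, because $\bar R$ is a critical point of $\bar\xi$, giving $P = 0$ there.

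Next I would derive the PDE satisfied by $P$. The Bochner--Weitzenb\"ock formula on $\mathbb{S}^2$ (where $\textup{Ric} = g$) gives $\Delta W = 2|\nabla^2\xi|^2 - 2W$, while the chain rule for $G\circ\xi$ combined with $\Delta\xi = -2\xi$ yields $\Delta W_{\bar R} = G''(\xi)\,W - 2\xi\,G'(\xi)$. Applying these two identities to the model solution on its own domain produces the algebraic identity $2|\nabla^2\bar\xi|^2(\chi(\xi)) = (G''(\xi)+2)\,G(\xi) - 2\xi\,G'(\xi)$; subtracting one obtains
\begin{equation*}
\Delta P + (G''(\xi)+2)\,P \;=\; 2\bigl(|\nabla^2\xi|^2 - |\nabla^2\bar\xi|^2(\chi(\xi))\bigr).
\end{equation*}
Assume now that $P$ attains a positive interior maximum at a point $x_0 \in \mathcal{U}$ with $\nabla\xi(x_0)\neq 0$. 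Then $\nabla P(x_0) = 0$ combined with $\nabla W = 2\nabla^2\xi(\nabla\xi,\cdot)$ and $\nabla W_{\bar R} = G'(\xi)\nabla\xi$ forces $\nabla\xi(x_0)$ to be an eigenvector of $\nabla^2\xi(x_0)$ with eigenvalue $G'(\xi(x_0))/2$, which coincides with the radial eigenvalue of $\nabla^2\bar\xi$ at $r = \chi(\xi(x_0))$ computed via $G'(\xi) = -2\bigl(\xi + \bar\alpha/(1-\chi^2)\bigr)$ from Proposition \ref{propertiesXi}. The trace constraint $\Delta\xi = -2\xi$ then pins the tangential eigenvalue as well, so both eigenvalues of $\nabla^2\xi(x_0)$ coincide with those of $\nabla^2\bar\xi$ at the matching model radius, and the right-hand side of the displayed equation vanishes at $x_0$. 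The identity $\Delta P(x_0) = -(G''(\xi(x_0))+2)\,P(x_0)$ together with $\Delta P(x_0)\leq 0$, $P(x_0)>0$, and a sign analysis of $G''(\xi)+2$ delivers the required contradiction.

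For the rigidity conclusion, an interior zero of $P$ combined with the strong maximum principle applied to the equation above, after re-expressing the right-hand side as a non-negative quadratic form in $\nabla P$ via the refined Kato inequality $|\nabla^2\xi|^2 \geq |\nabla W|^2/(4W)$ (saturated on the model along the radial direction), forces $P\equiv 0$ on $\mathcal{U}$; the global matching of Hessian eigenvalues then implies $\xi$ satisfies the model's radial ODE along integral curves of $\nabla\xi$, yielding $(\Omega,\xi)\equiv(\bar\Omega,\bar\xi)$. The hard part is precisely this reduction to a $P$-only elliptic inequality: the sign of $G''(\xi)+2$ varies with the range of $\Psi$, so one cannot directly invoke the weak maximum principle for the displayed equation, and the interior critical points of $\xi$ inside $\mathcal{U}$ cause the Kato bound to degenerate; the cleanest route is to absorb the Hessian discrepancy into a first-order term $X(p)\cdot\nabla P$ with bounded $X$, producing an elliptic inequality $\Delta P + X\cdot\nabla P + c(p)\,P \geq 0$ to which a Hopf-type maximum principle applies.
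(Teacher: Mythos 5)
Your overall strategy---comparing $W=\abs{\nabla\xi}^2$ with $W_{\bar R}=G(\xi)$ through a maximum principle for $P=W-W_{\bar R}$, with the boundary inequality coming from the definition of the NWSS---is the same as the paper's, and both your boundary analysis and the Bochner identity $\Delta P+(G''(\xi)+2)P=2\bigl(\abs{\nabla^2\xi}^2-\abs{\nabla^2\bar\xi}^2(\chi(\xi))\bigr)$ are correct. The gap sits exactly where you flag ``the hard part'': the interior-maximum argument does not close, and the fix you propose does not remove the obstruction. At a positive interior maximum $x_0$ of $P$ with $\nabla\xi(x_0)\neq0$ you correctly deduce that the right-hand side vanishes, so $\Delta P(x_0)=-(G''(\xi(x_0))+2)P(x_0)$; but from \eqref{primeraDerivadaChi} one computes $G''(\xi)+2=-\tfrac{4\bar\alpha\Psi^2}{(1-\Psi^2)\left((1-\Psi^2)\xi-\bar\alpha\right)}$, and on the branch $\Psi=\chi_+$ (i.e.\ whenever $\overline{\tau}(\U)\geq\tau_0$, the generic case) one has $(1-\Psi^2)\xi-\bar\alpha=(1-\Psi^2)\,\Psi\,\partial_r\bar\xi<0$, hence $G''+2>0$, and the conclusion $\Delta P(x_0)\le 0$ is perfectly consistent with $P(x_0)>0$: no contradiction. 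Likewise, lower-bounding $\abs{\nabla^2\xi}^2$ by the completed square does produce a bounded first-order term in $\nabla P$ (this is precisely the paper's inequality \eqref{ineLaplaciano}), but it leaves a zeroth-order coefficient, $\tfrac{4\Psi^2\bar\alpha}{((1-\Psi^2)\xi-\bar\alpha)^2}\bigl(\xi-\tfrac{\Psi^2\bar\alpha}{((1-\Psi^2)\xi-\bar\alpha)^2}\abs{\nabla\xi}^2\bigr)$, of indefinite sign, so ``absorbing into $X\cdot\nabla P$'' does not yield an inequality to which a Hopf-type maximum principle applies.

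The missing idea is the multiplicative weight: the paper sets $F_\beta=\beta\,(W-W_{\bar R})$ with $\beta=\sqrt{1-\Psi^2}/\sqrt{W_{\bar R}}$, chosen so that the first-order coefficient cancels and the zeroth-order coefficient becomes $-8\bar\alpha\Psi^4(1-\Psi^2)\xi/\left((1-\Psi^2)\xi-\bar\alpha\right)^4\le 0$. This weight blows up on $\textup{Max}(\xi)$, so one must then show $F_\beta\to0$ there, which the paper does via Lemma \ref{lemmaWR} together with the reverse Lojasiewicz inequality; your observation that $P=0$ on $\textup{Max}(\xi)$ is true but no longer sufficient once the weight is introduced. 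Your rigidity sketch also diverges from the paper's (which shows that $W\equiv W_{\bar R}$ forces the level sets to have constant geodesic curvature, identifies the boundary component as a circle, and concludes by analyticity), but the primary defect is the absence of a valid elliptic inequality in the interior.
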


To prove the previous theorem we will need to know how the previous functions behaves near the top level set $\textup{Max}(\xi)$. The following lemma will be useful later (cf. \cite{ABM}).
\begin{lemma}\label{lemmaWR}
	Let $(\Omega, \xi)$, $(\bar \Omega, \bar \xi)$ and $\mathcal{U}$ be as in Theorem \ref{theor_Gradient} and consider $W_{\bar R}$ defined in \eqref{normModelGradient}. Then, if $p \in \textup{Max}(\xi)$, it holds:
	\[
	\lim_{x \in \mathcal{U},x \to p} \frac{W_{\bar R}}{\xi_{\textup{max}}-\xi} = 4 \xi_{\textup{max}}.
	\]
\end{lemma}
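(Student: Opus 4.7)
The plan is to reduce everything to an ordinary Taylor expansion at the single point $r=\bar R$, where $\bar\xi$ attains its maximum $\xi_{\textup{max}}$. The key observation is that although $W_{\bar R}$ is defined on $\mathcal U \subset \Omega$, it is actually the pull‑back of the radial function $r\mapsto|\nabla\bar\xi|^2(r)$ via the pseudo-radial function $\Psi$, and the denominator $\xi_{\textup{max}}-\xi$ pulls back in the same way through the identity $\xi=\bar\xi\circ\Psi$ (which is just the definition $F(\xi,\chi_{\pm}(\xi))=0$). So the limit is a one-dimensional computation at $r=\bar R$, independent of the particular point $p\in\textup{Max}(\xi)$ we approach.

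First I would note that as $x\to p\in\textup{Max}(\xi)$ one has $\xi(x)\to\xi_{\textup{max}}$ and hence $\Psi(x)=\chi_{\pm}(\xi(x))\to\chi_{\pm}(\xi_{\textup{max}})=\bar R$, since $\bar R$ is the unique interior critical point of $\bar\xi$. Write $\eta:=\Psi(x)-\bar R$, so $\eta\to 0$. Using Proposition \ref{propertiesXi} one has $\partial_r\bar\xi(\bar R)=0$ and
\[
\frac{\partial^2\bar\xi}{\partial r^2}(\bar R)=-\frac{2\bar\alpha}{(1-\bar R^2)^2}.
\]
A second order Taylor expansion of $\bar\xi$ at $\bar R$ combined with $\xi(x)=\bar\xi(\Psi(x))$ then yields
\[
\xi_{\textup{max}}-\xi(x)=\bar\xi(\bar R)-\bar\xi(\bar R+\eta)=\frac{\bar\alpha}{(1-\bar R^2)^2}\,\eta^2+O(\eta^3).
\]

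Next I would expand the numerator in the same way. Since $|\nabla\bar\xi|^2(r)=(1-r^2)(\partial_r\bar\xi(r))^2$ (because $\bar\xi$ depends only on $r$ and $g^{rr}=1-r^2$), and $\partial_r\bar\xi(r)=\bar\xi''(\bar R)\eta+O(\eta^2)$, we obtain
\[
W_{\bar R}(x)=|\nabla\bar\xi|^2(\bar R+\eta)=(1-\bar R^2)\,\bar\xi''(\bar R)^2\,\eta^2+O(\eta^3)=\frac{4\bar\alpha^2}{(1-\bar R^2)^3}\,\eta^2+O(\eta^3).
\]
Dividing the two expansions and cancelling the common $\eta^2$ factor,
\[
\frac{W_{\bar R}(x)}{\xi_{\textup{max}}-\xi(x)}=\frac{\dfrac{4\bar\alpha^2}{(1-\bar R^2)^3}+O(\eta)}{\dfrac{\bar\alpha}{(1-\bar R^2)^2}+O(\eta)}\;\xrightarrow[x\to p]{}\;\frac{4\bar\alpha}{1-\bar R^2}=4\,\xi_{\textup{max}},
\]
where in the last equality I use the normalization $\xi_{\textup{max}}=(\xi_{\bar R})_{\textup{max}}=\bar\alpha/(1-\bar R^2)$ from Definition \ref{normalizedSolution} and Proposition \ref{prop_2.1}.

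The only subtlety to watch for is that the asymptotic $\eta\to 0$ is valid uniformly as $x\to p$; this is automatic because $\chi_{\pm}$ is continuous on $[0,\xi_{\textup{max}}]$ with $\chi_{\pm}(\xi_{\textup{max}})=\bar R$, so the argument works the same way on either branch and for any point $p\in\textup{Max}(\xi)$. No geometric input about $\xi$ beyond the fact that $p$ is a maximum point (so that $\xi(x)\to\xi_{\textup{max}}$) is required; in particular the statement holds whether $p$ is an isolated maximum in $\gamma^0$ or belongs to a maximum curve in $\gamma^1$.
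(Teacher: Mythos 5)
Your proof is correct and follows essentially the same route as the paper: both set $z=\Psi-\bar R$, Taylor-expand $\xi_{\textup{max}}-\xi$ and $W_{\bar R}$ to second order at $\bar R$ using the explicit radial derivatives of $\bar\xi$, and divide, using the normalization $\xi_{\textup{max}}=\bar\alpha/(1-\bar R^2)$. The only cosmetic difference is that you expand $\partial_r\bar\xi$ and square, while the paper differentiates $W_{\bar R}=(1-\Psi^2)(\partial\xi/\partial\Psi)^2$ directly; the coefficients obtained are identical.
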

\begin{proof}
The proof of this lemma is a simple computation using the Taylor expansions of $W_{\bar R}$ and $\xi_{\textup{max}}-\xi$ as functions of $\Psi$. Observe that taking $z=\Psi-\bar{R}$ then it is clear that
	\begin{equation}\label{expansionXi}
		\begin{split}
			\xi_{\textup{max}}-\xi= \frac{\bar \alpha}{1-\bar{R}^2}-\xi &= \frac{\bar \alpha}{(1-\bar{R}^2)^2}(\Psi-\bar{R})^2+O((\Psi-\bar{R})^3) \\
			&= \frac{\bar \alpha}{(1-\bar{R}^2)^2}z^2+ O(z^3).
		\end{split}
	\end{equation}
Note that we have done the Taylor expansion of $\xi \circ \Psi$ at $\Psi=\bar{R}$, although $\Psi (p)$ is not defined in $\textup{Max}(\xi)$. However, it is clear that we can extend $\Psi$ to $\textup{Max}(\xi)$ by $\Psi(p)=\bar{R}$ for all $p \in \textup{Max}(\xi)$, so the previous expansion makes sense. 

On the other hand, observe that
\[
W_{\bar R} (\Psi)= \frac{1-\Psi^2}{\Psi^2} \left(\xi-\frac{ \bar \alpha}{1-\Psi^2}\right)^2=(1-\Psi^2) \left(\frac{\partial \xi}{\partial \Psi}\right)^2.
\]
Hence
\[
\frac{\partial W_{\bar R}}{\partial\Psi}(\bar R)=0 \text{ and }\frac{\partial^2 W_{\bar{R}}}{\partial \Psi^2}(\bar{R})=2 (1-\bar{R}^2)\left(\frac{\partial^2 \xi}{\partial \Psi^2} (\bar{R})\right)^2=\frac{8 \bar{\alpha}^2}{(1-\bar{R}^2)^3},
\]
so the Taylor expansion of $W_{\bar{R}}$ up to second order is
\begin{equation}\label{expansionWR}
W_{\bar{R}}= \frac{4 \bar{\alpha}^2}{(1-\bar{R}^2)^3}z^2+O(z^3).
\end{equation}
Then it is clear that
\[
\lim_{x \in \mathcal{U},x \to p} \frac{W_{\bar R}}{\xi_{\textup{max}}-\xi}=\lim_{z \to 0} \frac{\frac{4 \bar{\alpha}^2}{(1-\bar{R}^2)^3}z^2+O(z^3)}{\frac{\bar{\alpha}}{(1-\bar{R}^2)^2}z^2+ O(z^3)} = \frac{4 \bar{\alpha}}{1-\bar{R}^2}
\]
and the result is proved.
\end{proof}

Next, we shall establish some differential inequalities. Firstly, we want to bound the norm of the hessian of $\xi$ in terms of the functions defined in \eqref{normModelGradient} and \eqref{normGradient}. From \eqref{relacionHessiano}, we obtain the following expression:

\begin{eqnarray*}
	0 &\leq& \abs{ \nabla^2 \xi + \frac{2 \bar{\alpha} \Psi^2}{ \left( (1-\Psi^2)\xi - \bar{\alpha} \right)^2} d\xi \otimes d\xi+  \left(\xi-\frac{\Psi^2 \bar{\alpha}}{\left((1-\Psi^2)\xi-\bar{\alpha}\right)^2} \abs{\nabla \xi}^2\right) g_{\sp^2} }^2\\
	&=& \abs{\nabla^2 \xi}^2 +\frac{4 \bar{\alpha}^2 \Psi^4}{\left((1-\Psi^2)\xi-\bar{\alpha}\right)^4} \abs{d \xi \otimes d \xi}^2+ \left(\xi-\frac{\Psi^2 \bar{\alpha}}{\left((1-\Psi^2)\xi-\bar{\alpha}\right)^2} \abs{\nabla \xi}^2\right) \abs{g_{\sp^2}}^2 \\
	& & + \frac{4 \bar{\alpha} \Psi^2}{\left((1-\Psi^2)\xi-\bar{\alpha}\right)^2} \left(\xi-\frac{\Psi^2 \bar{\alpha}}{\left((1-\Psi^2)\xi-\bar{\alpha}\right)^2} \abs{\nabla \xi}^2\right) \pscalar{d \xi \otimes d \xi}{g_{\sp^2}} \\
	& & + \frac{4 \bar{\alpha} \Psi^2}{\left((1-\Psi^2)\xi-\bar{\alpha}\right)^2} \pscalar{\nabla^2 \xi}{d \xi \otimes d \xi}+2\left(\xi-\frac{\Psi^2 \bar{\alpha}}{\left((1-\Psi^2)\xi-\bar{\alpha}\right)^2} \abs{\nabla \xi}^2\right) \pscalar{\nabla^2 \xi}{g_{\sp^2}}.
\end{eqnarray*}
Using that
\[
\abs{d\xi \otimes d\xi}^2=\abs{\nabla \xi}^4, \,\,  \pscalar{d\xi \otimes d\xi}{g_{\sp^2}}=\abs{\nabla \xi}^2 \,\,  \textup{and} \,\,  \pscalar{\nabla^2 \xi}{g_{\sp^2}}= \Delta \xi,
\]
we get
\begin{eqnarray*}
	0 &\leq& \abs{\nabla^2 \xi}^2+\frac{4 \bar{\alpha} \Psi^2}{\left((1-\Psi^2)\xi-\bar{\alpha}\right)^2} \pscalar{\nabla^2 \xi}{d \xi \otimes d \xi}-2 \xi^2 + \frac{2 \bar{\alpha}^2 \Psi^4}{\left((1-\Psi^2)\xi-\bar{\alpha}\right)^4} \abs{\nabla \xi}^4  \\ & & + \frac{4 \xi \bar{\alpha} \Psi^2}{\left((1-\Psi^2)\xi-\bar{\alpha}\right)^2} \abs{\nabla \xi}^2, 
\end{eqnarray*}
and then we reach the inequality
\begin{equation}\label{ine1}
\begin{split}
		\abs{\nabla^2 \xi}^2  \geq & -\frac{4 \bar{\alpha} \Psi^2}{\left((1-\Psi^2)\xi-\bar{\alpha}\right)^2} \pscalar{\nabla^2 \xi}{d \xi \otimes d \xi} +2 \xi^2 \\
		 &- \frac{2 \bar{\alpha}^2 \Psi^4}{\left((1-\Psi^2)\xi-\bar{\alpha}\right)^4} \abs{\nabla \xi}^4  - \frac{4 \xi \bar{\alpha} \Psi^2}{\left((1-\Psi^2)\xi-\bar{\alpha}\right)^2} \abs{\nabla \xi}^2.
\end{split}
\end{equation}

To compute the scalar product $\pscalar{\nabla^2 \xi}{d\xi \otimes d\xi}$ we use the following lemma which is a standard computation in geodesic coordinates.
\begin{lemma}
Let $(M,g)$ be a $n$-dimensional Riemannian manifold and $f \in \mathcal{C}^{\infty} (M)$. Then:
\begin{equation}\label{technicalEquality}
\pscalar{\nabla \abs{\nabla f}^2}{\nabla f}= 2 \nabla^2 f (\nabla f, \nabla f) = 2 \pscalar{\nabla^2 f}{df \otimes df}.
\end{equation}
\end{lemma}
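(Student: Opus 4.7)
The plan is to prove the identity in two steps, corresponding to the two equalities, as both are standard consequences of the metric compatibility of the Levi-Civita connection and the definition of the induced inner product on tensors.

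For the first equality, I would use metric compatibility: for any vector field $X$ on $M$,
\begin{equation*}
X\bigl(\abs{\nabla f}^2\bigr) = X\bigl(g(\nabla f, \nabla f)\bigr) = 2\, g(\nabla_X \nabla f, \nabla f).
\end{equation*}
Since $\nabla_X \nabla f$ is precisely the action of the Hessian endomorphism on $X$, we have $g(\nabla_X \nabla f, \nabla f) = \nabla^2 f(X, \nabla f)$. Equivalently, $\nabla \abs{\nabla f}^2 = 2\, \nabla^2 f(\cdot, \nabla f)^{\sharp}$. Evaluating at $X = \nabla f$ and using the definition of the gradient as the metric dual of $d(\abs{\nabla f}^2)$ gives the first equality
\begin{equation*}
\pscalar{\nabla \abs{\nabla f}^2}{\nabla f} = 2\, \nabla^2 f(\nabla f, \nabla f).
\end{equation*}

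For the second equality, I would work in a local orthonormal frame $\{e_i\}_{i=1}^n$ and write $f_i := e_i(f)$ so that $\nabla f = \sum_i f_i\, e_i$. The induced inner product on symmetric $(0,2)$-tensors satisfies
\begin{equation*}
\pscalar{\nabla^2 f}{df \otimes df} = \sum_{i,j} (\nabla^2 f)_{ij}\, (df \otimes df)_{ij} = \sum_{i,j} (\nabla^2 f)_{ij}\, f_i f_j = \nabla^2 f(\nabla f, \nabla f),
\end{equation*}
which is exactly the claimed identity. Combining both steps yields the chain of equalities in the lemma.

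There is essentially no obstacle here; the statement is a routine index manipulation used mainly to rewrite the Bochner-type quantity $\pscalar{\nabla^2 \xi}{d\xi \otimes d\xi}$ in a form amenable to combination with \eqref{ine1}. The only minor subtlety worth stating explicitly is the symmetry of the Hessian, which is automatic for the Levi-Civita connection and ensures that both sides of the second equality are unambiguously defined.
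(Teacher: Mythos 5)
Your proof is correct, and the paper itself offers no proof at all --- it simply asserts that the identity is ``a standard computation in geodesic coordinates,'' which is the same routine local argument you carry out via metric compatibility and an orthonormal frame. Nothing further is needed.
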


Note that
\[
\nabla W_{\bar R}=\frac{\partial}{\partial \Psi} \left(  \frac{1-\Psi^2}{\Psi^2} \left(\xi-\frac{\bar{\alpha}}{1-\Psi^2}\right)^2 \right) \dot{\chi}(\xi) \nabla \xi=-2 \left( \xi + \frac{\bar{\alpha}}{1-\Psi^2}\right)\nabla \xi, 
\]
and then using \eqref{technicalEquality} we get
\[
\pscalar{\nabla^2 \xi}{d\xi \otimes d\xi}= \frac{1}{2} \pscalar{\nabla(W-W_{\bar{R}})}{\nabla \xi}-\left( \xi + \frac{\bar{\alpha}}{1-\Psi^2}\right)\abs{\nabla \xi}^2.
\] 
We use the previous equality and \eqref{ine1} to get
\begin{equation}\label{ine2}
	\begin{split}
		\abs{\nabla^2 \xi}^2  \geq & -\frac{2 \bar{\alpha} \Psi^2}{\left((1-\Psi^2) \xi - \bar{\alpha}\right)^2}\pscalar{\nabla (W-W_{\bar R})}{\nabla \xi}+ 2\xi^2 \\
		& -\frac{2 \bar{\alpha}^2 \Psi^4}{\left((1-\Psi^2) \xi - \bar{\alpha}\right)^4} \abs{\nabla \xi}^4+\frac{4 \bar{\alpha}^2 \Psi^2}{(1-\Psi^2)\left((1-\Psi^2) \xi - \bar{\alpha}\right)^2} \abs{\nabla \xi}^2  .
	\end{split}
\end{equation}

Now, we want to apply the maximum principle to the function $W-W_{\bar R}$, so we want this function to satisfy an elliptic inequality. With this in mind, we compute the laplacian of this expression. Using the Bochner's formula,
\[
\Delta (W-W_{\bar R})= 2 \abs{\nabla^2 \xi}^2- 4 \left(\xi+\frac{\bar{\alpha}}{1-\Psi^2}\right)+\frac{4 \Psi^2 \bar{\alpha}}{(1-\Psi^2)\left((1-\Psi^2) \xi - \bar{\alpha}\right)} W.
\]
From \eqref{ine2}, we get
\begin{equation*}
	\begin{split}
	\Delta (W-W_{\bar R}) \geq & - \frac{4 \bar{\alpha} \Psi^2}{\left((1-\Psi^2)\xi - \bar{\alpha}\right)^2} \pscalar{\nabla (W-W_{\bar R})}{\nabla \xi}+ \frac{8 \bar{\alpha}^2 \Psi^2}{(1-\Psi^2) \left((1-\Psi^2) \xi - \bar{\alpha}\right)^2}W \\
	&- \frac{4 \bar{\alpha}^2 \Psi^4}{\left((1-\Psi^2) \xi - \bar{\alpha}\right)^4} W^2 - \frac{4 \xi \bar{\alpha}}{1-\Psi^2}+\frac{4 \Psi^2 \bar{\alpha}}{(1-\Psi^2) \left((1-\Psi^2) \xi - \bar{\alpha}\right)}W.
	\end{split}
\end{equation*}
Then, using that $$W_{\bar R}=\frac{\left((1-\Psi^2) \xi - \bar{\alpha}\right)^2 }{\Psi^2 (1-\Psi^2)} $$ and, after some simplifications, we get
\begin{equation}\label{ineLaplaciano}
	\begin{split}
	\Delta (W-W_{\bar R}) \geq & - \frac{4 \bar{\alpha} \Psi^2}{\left((1-\Psi^2)\xi - \bar{\alpha}\right)^2} \pscalar{\nabla (W-W_{\bar R})}{\nabla \xi} \\
	& + \frac{4 \Psi^2 \bar{\alpha}}{\left((1-\Psi^2) \xi -\bar{\alpha} \right)^2} \left(\xi - \frac{\Psi^2 \bar{\alpha}}{\left((1-\Psi^2) \xi -\bar{\alpha} \right)^2} \abs{\nabla \xi}^2 \right) (W-W_{\bar R}).
	\end{split}
\end{equation}
Hence, we have arrived at a differential inequality that is not elliptic at the quantity $W-W_{\bar R}$. 

To obtain an elliptic relation, we define the function $F_{\beta}=\beta \cdot (W-W_{\bar R})$, where $\beta=\beta(\Psi)>0$ is a function that we will determine later.
First, we observe that
\begin{equation}\label{lapF}
\Delta F_{\beta} = \beta \cdot \Delta (W-W_{\bar R}) + 2 \pscalar{\nabla \beta}{\nabla (W-W_{\bar{R}})} +   (W-W_{\bar R}) \Delta \beta .
\end{equation}
We compute the second and the third terms of the sum. Using that $\nabla \beta= \beta' (\Psi) \dot{\chi}(\xi) \nabla \xi$ (we use $\beta'$ to denote the derivative with respect to $\Psi$), we have that
\[
2 \pscalar{\nabla \beta}{\nabla (W-W_{\bar R})}= 2 \dot{\chi}\frac{\beta'}{\beta} \pscalar{\beta \nabla (W-W_{\bar R})}{\nabla \xi}.
\]
It is clear that
\[
\begin{split}
	\pscalar{\nabla F_{\beta}}{\nabla \xi} = \dot{\chi} \beta' W (W-W_{\bar R})+ \pscalar{\beta \nabla (W-W_{\bar R})}{\nabla \xi},
\end{split}
\]
so the second term of the sum is
\[
2 \pscalar{\nabla \beta}{\nabla (W-W_{\bar R})}= 2 \dot{\chi} \frac{\beta'}{\beta} \pscalar{ \nabla F_{\beta}}{\nabla \xi} - 2 \dot{\chi}^2 \left(\frac{\beta'}{\beta}\right)^2 W F_{\beta}.
\]
For the third term of the sum, using the chain rule we get 
\[
\Delta \beta = -2 \xi \dot{\chi} \beta'+\left(\ddot{\chi} \beta'+\dot{\chi}^2 \beta'' \right)W,
\]
so the third term of \eqref{lapF} is
\[
 (W-W_{\bar R}) \Delta \beta = -2 \xi \dot{\chi} \frac{\beta'}{\beta} F_{\beta}+ \left(\ddot{\chi} \frac{\beta'}{\beta}+ \dot{\chi}^2 \frac{\beta''}{\beta} \right)W F_{\beta}.
\]
Finally observe that from \eqref{primeraDerivadaChi} and \eqref{segundaDerivadaChi} we obtain, using that $\Psi (p)=\chi (\xi (p))$, that
\[
\dot{\chi}= \frac{\Psi (1-\Psi^2)}{(1-\Psi^2)\xi - \bar{\alpha}} \quad \textup{and} \quad \ddot{\chi} = \frac{2 \bar{\alpha} \Psi^3 (1-\Psi^2)}{\left((1-\Psi^2) \xi - \bar{\alpha}\right)^3},
\]
so combining \eqref{ineLaplaciano} and \eqref{lapF}, after some simplifications, it holds
\begin{equation}\label{ineqF}
	\begin{split}
		\Delta F_{\beta} \geq & \frac{2 \Psi (1-\Psi^2)}{(1-\Psi^2) \xi - \bar{\alpha}} \left(\frac{\beta'}{\beta}-\frac{2 \bar{\alpha} \Psi}{(1-\Psi^2) \left((1-\Psi^2)\xi - \bar{\alpha}\right)}\right) \pscalar{\nabla F_{\beta}}{\nabla \xi} \\
		& -\frac{2 \xi \Psi (1-\Psi^2)}{(1-\Psi^2)\xi - \bar{\alpha}} \left(\frac{\beta'}{\beta}-\frac{2 \Psi \bar{\alpha}}{(1-\Psi^2) \left((1-\Psi^2)\xi - \bar{\alpha}\right)}\right) F_{\beta} \\
		&+ \frac{\Psi^2 (1-\Psi^2)^2 \abs{\nabla \xi}^2}{\left((1-\Psi^2) \xi - \bar{\alpha}\right)^2}\Bigg( \left(\frac{\beta'}{\beta}\right)' -\left(\frac{\beta'}{\beta}\right)^2+ \frac{6 \Psi \bar{\alpha}}{(1-\Psi^2)\left((1-\Psi^2)\xi- \bar{\alpha}\right)} \frac{\beta'}{\beta} \\
		&-\frac{4 \Psi^2 \bar{\alpha}^2}{(1-\Psi^2)^2\left((1-\Psi^2)\xi - \bar{\alpha}\right)^2} \Bigg)F_{\beta}.
	\end{split}
\end{equation}
Now we are ready to prove Theorem \ref{theor_Gradient}.
\begin{proof}[Proof of Theorem \ref{theor_Gradient}:]\label{Proof of Theorem 4.1}
We give the proof in two parts: first, we will prove that $F_{\beta}$ satisfies an elliptic inequality, hence $W \leq W_{\bar{R}}$ in $\mathcal U$ and $W \equiv W_{\bar R}$ in $\mathcal U$ if  $W(p)=W_{\bar R}(p)$ at one single point $p \in \mathcal U$ by the maximum principle. Second, we will show the rigidity statement in the latter case. 

Define $F_{\beta}=\beta \cdot (W-W_{\bar R})$, where $$\beta (\Psi):= \frac{\sqrt{1-\Psi^2}}{\sqrt{W_{\bar R}}}>0$$ is a solution to the differential equation
\[
\frac{\beta'}{\beta}-\frac{2 \Psi \bar{\alpha}}{(1-\Psi^2)\left((1-\Psi^2)\xi - \bar{\alpha}\right)}=0.
\]

Take $\epsilon>0$ small enough and define $\mathcal{U}_{\epsilon}$ as in \eqref{innerRegionEpsilon}, using \eqref{ineqF} we get that $F_{\beta}$ satisfies the elliptic inequality
\[
\Delta F_{\beta}- \frac{8 \bar{\alpha} \Psi^4 (1-\Psi^2) \xi}{\left((1-\Psi^2)\xi - \bar{\alpha}\right)^4}  F_\beta\geq 0 \text{ in } \mathcal{U}_{\epsilon}. 
\]

Hence, using Lemma \ref{lemmaWR} and the Reverse Lojasiewicz Inequality \cite[Theorem 2.2]{Chr}, it follows that
\[
\lim_{x \in \mathcal{U}, x \to \textup{Max}(\xi)} \frac{W}{\sqrt{W_{\bar{R}}}}=\lim_{x \in \mathcal{U}, x \to \textup{Max}(\xi)} \frac{\abs{\nabla \xi}^2}{2 \sqrt{\xi_{\textup{max}}}\sqrt{\xi_{\textup{max}}-\xi}}=0,
\]
so it is clear that
\[
\lim_{x \in \mathcal{U}, x \to \textup{Max}(\xi)} F_{\beta} (p) = \lim_{x \in \mathcal{U}, x \to \textup{Max}(\xi)} \sqrt{1-\Psi^2} \frac{W}{\sqrt{W_{\bar R}}}-\sqrt{1-\Psi^2} \sqrt{W_{\bar R}}=0.
\]

Therefore, applying the maximum principle to $F_{\beta}$ in $\mathcal{U}_{\epsilon}$ and letting $\epsilon$ tend to zero (such that $\xi_{\textup{max}}-\epsilon$ is always a regular value) we conclude the first part of the proof.

We continue with the second part of the proof. We proceed as in \cite[Theorem 4.2]{Borg}. If we assume that $W(p)=W_{\bar R} (\Psi (\xi (p)))$ in $\mathcal U$, then it follows that $W= \abs{\nabla \xi}^2$ is a positive function in $\mathcal U$ that only depends on $\xi$ and hence it is constant along level sets. Therefore, there are no critical points of $\xi$ in $\mathcal U$, so we can parametrize $\mathcal U$ by level sets with coordinates $(\xi, \theta)$. In these coordinates, the metric on the sphere has the form
\[
g_{\sp^2}=\frac{1}{W} d \xi^2+ G(\xi, \theta) d \theta^2,
\]
where $G$ is a positive function. It is easy to compute the hessian of $\xi$ on level sets in these coordinates:
\[
\nabla_{i j}^2 \xi = \partial_{i j}^2 \xi - \Gamma_{i j}^k \partial_k \xi =-\Gamma_{i j}^{\xi},
\]
and
\[
-\Gamma_{\xi \xi }^{\xi} = \frac{\dot{W}}{2 W}, \, -\Gamma_{\xi \theta}^{\xi}=-\Gamma_{\theta \xi}^{\xi}=0, \, \text{and} \, -\Gamma_{\theta \theta}^{\xi}=- \frac{1}{2} W \frac{\partial G}{\partial \xi},
\]
 where $\dot{W}$ denote the derivative of $W$ with respect to $\xi$. Then we can compute the geodesic curvature of the level sets  with respect to the normal vector given by $\nabla \xi / \abs{\nabla \xi}$ easily. Using the formula given in \cite{Gold}, we have
\begin{equation}\label{GCurvature}
\kappa(\xi, \theta)= \frac{\nabla^2 \xi (\nabla \xi, \nabla \xi)- \abs{\nabla \xi}^2 \Delta \xi}{\abs{\nabla \xi}^3} = -\frac{\dot{W}}{2 W^{5/2}}+\frac{2 \xi}{W^{1/2}}.
\end{equation}

Thus, the geodesic curvature of the level sets only depends on $\xi$, so they are curves of constant geodesic curvature. As the level set curves do not cross, we have that they are intersections of parallel planes with $\mathbb{S}^2$. 

Then, it follows that $\Gamma=\partial \Omega \cap \overline{\mathcal U}$ is a circle on the sphere and $\xi= \bar \xi$ in a neighborhood of $\Gamma$ in $\mathcal U$, so we conclude that $\xi=\bar \xi$ in $\Omega$ since they are analytic functions. This concludes the proof of the theorem.

\end{proof}

\begin{remark}
	Note that we are using the sign convention for the curvature such that the geodesic curvature of $\partial \d ({\bf n}, s)$, where $\d ({\bf n}, s)$ is the geodesic disk in $\s^2$ centered at ${\bf n}$ of radius $s < \pi /2$, with respect to the inner normal is positive. It is important to remark on this because in \cite{ABM,Chr} the authors used the opposite sign convention.
\end{remark}

\subsection{Curvature estimates}

Here, we are going to obtain curvature estimates of the level sets of our solution $\xi$, taking advantage of the gradient estimates proved in Theorem \ref{theor_Gradient}. We recall that our model solution $\bar \xi$ satisfies:
\[
\nabla  \bar{\xi} = \frac{\sqrt{1-r^2}}{r} \left( \bar{\xi}- \frac{\bar{\alpha}}{1-r^2} \right)n
\]
and
\begin{equation*}
\nabla^2  \bar \xi=	\begin{pmatrix}
		- \bar \xi-\frac{\bar{\alpha}}{1-r^2} & 0\\
		0 & -\bar \xi + \frac{\bar{\alpha}}{1-r^2}
	\end{pmatrix},
\end{equation*}
so using \eqref{GCurvature}, we have that
\[
 \kappa(r) = \begin{cases} -\frac{r}{\sqrt{1-r^2}}, & \mbox{if } r \geq 0 ,\\ 
 \frac{r}{\sqrt{1-r^2}}, & \mbox{if } r <0 , 
 \end{cases}
\]
where the geodesic curvature is computed with respect to the inner orientation to $\mathcal{U}$, i.e., using the conormal vector given by $\nu=\nabla \xi / \abs{\nabla \xi}$.
\begin{figure}[htb]
	\centering
	\includegraphics[width=0.5\textwidth]{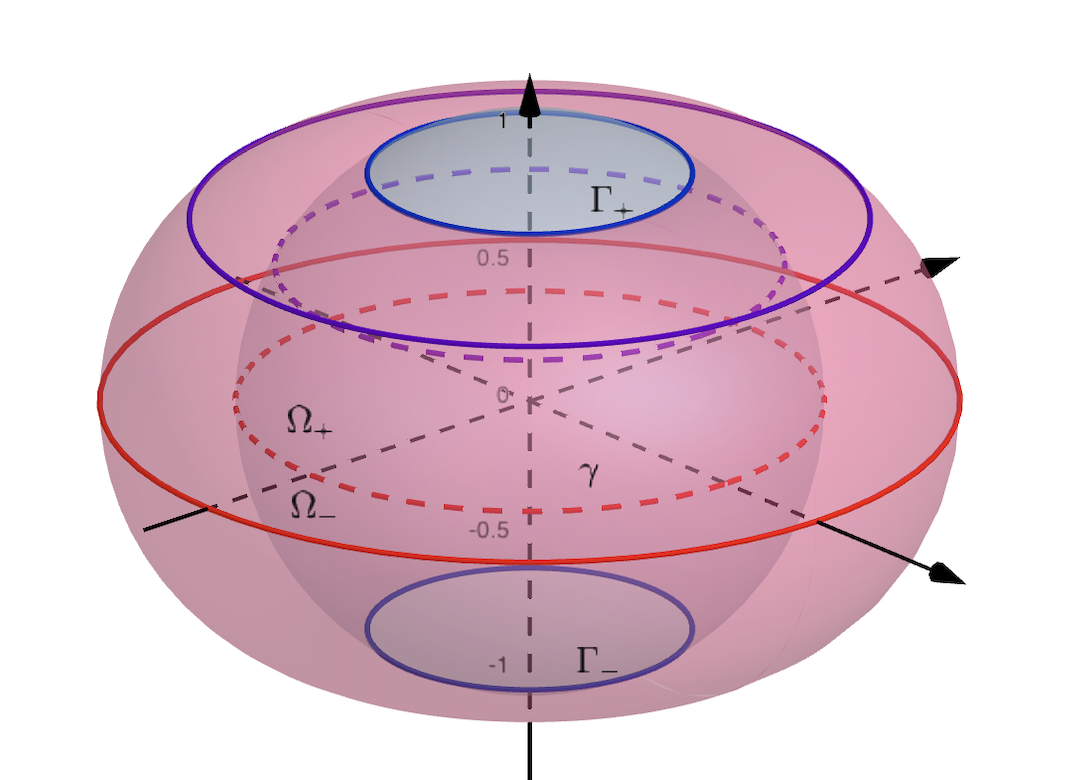}
	\caption{Here it appears the radial graph of the support function of the critical catenoid, which is the graph $\Sigma_{\xi_0}= \left\{(1+\xi_0 (r))(\sqrt{1-r^2} \cos \theta , \sqrt{1-r^2} \sin \theta , r) ~\colon~ r \in [-\bar r, \bar r], \theta \in [0, 2 \pi) \right\}$, where $\xi_0$ is the model solution defined in Section \ref{sec_2}. We see the zero-level sets in blue, the top-level set in red, and also the level set of height $r=0.5$ in purple. Here, the curvature of the level sets which are not in the top stratum is computed with respect to the conormal vector $\nu=\nabla \xi / \abs{\nabla \xi}$, which points to the curve $\gamma$ in each point of the domain.}
	\label{fig:graph}
\end{figure} 

In the next results, we estimate the geodesic curvature of the zero and top level sets of $\xi$.
\begin{proposition}\label{prop_curvatureZeroSets}
Let $(\Omega, \xi)$ be a solution to \eqref{DPS}, $\mathcal{U} \subset \Omega \setminus  \textup{Max} (\xi)$ a connected component and $\bar R= R(\mathcal{U})$ the expected critical height of the region. Let $(\bar \Omega, \bar \xi)$ be its associated model solution in $\mathcal{U}$ and $p \in \partial \Omega$ such that
\begin{equation}\label{condNabla}
\abs{\nabla \xi}^2 (p) = \max_{\partial \Omega \cap \textup{cl}(\U)} \abs{\nabla \xi}^2.
\end{equation}
Then, if $\kappa (p)$ denotes the curvature of $\partial \Omega$ at $p$ with respect the inner orientation to $\mathcal{U}$, it holds
\begin{equation*}
	\kappa (p) \leq -\frac{\bar{r}_+}{\sqrt{1-\bar{r}_+^2}} \quad \textup{if } \overline{\tau} (\mathcal{U}) \geq \tau_0 \quad \textup{and} \quad \kappa (p) \leq \frac{\bar{r}_{-}}{\sqrt{1-\bar{r}_{-}^2}} \quad \textup{if } \overline{\tau} (\mathcal{U}) < \tau_0,
\end{equation*}
where $\bar{r}_+ >0$ and $\bar{r}_- < 0$ are defined in \textup{Remark \ref{remarkNotation}}. 
\end{proposition}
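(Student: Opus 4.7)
The plan is to apply Hopf's boundary-point lemma to the auxiliary function $F_\beta = \beta\,(W - W_{\bar R})$ from the proof of Theorem \ref{theor_Gradient} and then translate the resulting normal-derivative inequality into the desired comparison of geodesic curvatures. Since $\xi \equiv 0$ on $\partial\Omega$ and $\xi>0$ in $\Omega$, applying Hopf to $\xi$ itself yields $\abs{\nabla\xi}(p)>0$, so $\nu := \nabla\xi/\abs{\nabla\xi}$ is a genuine inner unit normal to $\mathcal U$ at $p$. The first key observation is $W(p) = W_{\bar R}(p)$: by \eqref{condNabla}, the boundary component $\Gamma \ni p$ must realize $\overline\tau(\Gamma) = \overline\tau(\mathcal U)$, so $\abs{\nabla\xi}(p) = \overline\tau(\mathcal U)\,\xi_{\max}$. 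With the normalization $\xi_{\max} = \bar\xi_{\max}$, combining the explicit gradient-norm identities of Proposition \ref{prop_2.1} with the NWSS formula \eqref{NWSSModelSolutions} shows this quantity equals $\abs{\nabla\bar\xi}(\bar r_+)$ in Case~1 ($\overline\tau(\mathcal U)\geq\tau_0$) and $\abs{\nabla\bar\xi}(\bar r_-)$ in Case~2, which is precisely $\sqrt{W_{\bar R}(p)}$ since $\xi(p)=0$ forces $\Psi(p)=\bar r_\pm$ in the respective branch.

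From the proof of Theorem \ref{theor_Gradient}, $F_\beta \leq 0$ in $\mathcal U$ and satisfies an elliptic inequality whose zero-order coefficient has the sign required for Hopf to apply to a nonpositive function. Since $F_\beta(p)=0$, either $F_\beta \equiv 0$ on $\mathcal U$ — in which case the rigidity clause of Theorem \ref{theor_Gradient} gives $(\Omega,\xi)\equiv(\bar\Omega,\bar\xi)$ and $\kappa(p)$ equals the model curvature outright — or the interior sphere condition (provided by the $\mathcal C^2$ regularity of $\partial\Omega$) combined with Hopf's boundary-point lemma yields a strict inequality $\nabla F_\beta(p)\cdot N > 0$, where $N = -\nu$ is the outer normal to $\mathcal U$. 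Since $F_\beta(p)=0$ reduces $\nabla F_\beta(p)$ to $\beta(p)\,\nabla(W-W_{\bar R})(p)$ and $\beta(p)>0$, this simplifies to $\nabla(W-W_{\bar R})(p)\cdot\nu < 0$.

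Finally, both normal derivatives at $p$ are translated into curvature quantities. Differentiating $\xi\equiv 0$ twice along $\partial\Omega$, together with $\Delta\xi(p)=0$, gives $\nabla^2\xi(\nu,\nu)(p) = \abs{\nabla\xi}(p)\,\kappa(p)$, hence
\begin{equation*}
\nabla W(p)\cdot\nu \;=\; 2\,\nabla^2\xi(\nabla\xi,\nu)(p) \;=\; 2\,W(p)\,\kappa(p).
\end{equation*}
Since $W_{\bar R}$ depends only on $\xi$ in $\mathcal U$, $\nabla W_{\bar R}\cdot\nu = (dW_{\bar R}/d\xi)\,\abs{\nabla\xi}$; applying the same boundary identity to the model solution along $\Gamma_\pm(\bar R)$ — whose inner-oriented geodesic curvature is $\mp\,\bar r_\pm/\sqrt{1-\bar r_\pm^2}$ by \eqref{GCurvature} — yields $(dW_{\bar R}/d\xi)\,\abs{\nabla\bar\xi}(\bar r_\pm) = 2\,W_{\bar R}(p)\,\bar\kappa$, and since $\abs{\nabla\xi}(p) = \abs{\nabla\bar\xi}(\bar r_\pm)$ this produces $\nabla W_{\bar R}(p)\cdot\nu = 2\,W(p)\,\bar\kappa$. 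Combining, $W(p)\,(\kappa(p)-\bar\kappa) \leq 0$, so $\kappa(p)\leq \bar\kappa$, matching the stated bound in each case. The main obstacle will be sign and normalization bookkeeping: matching the component $\Gamma$ with the branch $\chi_\pm$ used to define $\Psi$, verifying $W(p) = W_{\bar R}(p)$ case by case, and confirming that the zero-order coefficient in the $F_\beta$ inequality has the correct sign for Hopf to apply to a nonpositive function.
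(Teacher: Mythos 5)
Your argument is correct and follows essentially the same route as the paper: both proofs rest on $W \le W_{\bar R}$ in $\mathcal U$ (Theorem \ref{theor_Gradient}) together with $W(p)=W_{\bar R}(p)$ forced by \eqref{condNabla} and the normalization, and then compare the inward normal derivatives of $W$ and $W_{\bar R}$ at $p$, using $\Delta\xi=0$ along $\partial\Omega$ to convert $\langle\nabla W,\nu\rangle$ into $2W\kappa$. The only cosmetic difference is that the paper gets $\langle\nabla(W-W_{\bar R}),\nu\rangle(p)\le 0$ directly from a first-order Taylor expansion along $\exp_p(s\nu)$, whereas you route it through Hopf's lemma for $F_\beta$ — which is valid, since the zero-order coefficient $-8\bar\alpha\Psi^4(1-\Psi^2)\xi\left((1-\Psi^2)\xi-\bar\alpha\right)^{-4}$ is nonpositive because $\xi\ge 0$, and it even yields strict inequality outside the rigidity case.
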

\begin{proof}
Since the geodesic curvature of the level sets of $\xi$ does not change after a change of scale, we will assume that $\xi$ is normalized in the sense of Definition \ref{normalizedSolution}. Consider the curve 
\[
\sigma (s) = \text{exp}_p \left( \frac{\nabla \xi}{\abs{\nabla \xi}}(p) s \right), \quad s \in \mathbb{R},
\]
at some $p \in \textup{cl}(\U) \cap \partial \Omega$, here the exponential map is the one of $\s^2$. We compute the Taylor expansion of $W$ along this curve:
\[
W(\gamma (s))=W(p)+ \pscalar{\nabla W}{\nabla \xi / \abs{\nabla \xi}} (p) s + O (s^2), \quad s \in (-\epsilon, \epsilon),
\]
for some $\epsilon >0$ small enough. Now, it is easy to check that
\[
\kappa (p)=\frac{\nabla^2 \xi (\nabla \xi, \nabla \xi)- \abs{\nabla \xi}^2 \Delta \xi}{\abs{\nabla \xi}^3} = \frac{\pscalar{\nabla \abs{\nabla \xi}^2}{\nabla \xi}}{2 \abs{\nabla \xi}^3},
\]
so we have that
\[
W(\gamma (s))= W(p) + 2 \kappa(p) W(p) s + O (s^2), \quad s \in (-\epsilon, \epsilon).
\]
Next, we compute the Taylor expansion of $W_{\bar{R}}$ along the same curve:
\begin{eqnarray*}
	W_{\bar R} (\gamma (s)) &=& W_{\bar R} (p) + \pscalar{\nabla W_{\bar{R}} (p)}{\nabla \xi/ \abs{\nabla \xi} (p)} s + O(s^2) \\
	&=& W_{\bar{R}} (p)+ \left( \pm \frac{2 \Psi}{\sqrt{1-\Psi^2}}\sqrt{W_{\bar{R}}} -\frac{4 \bar{\alpha}}{1-\Psi^2} \right) \sqrt{W}(p) s + O(s^2),
\end{eqnarray*}
where $s \in [0, \delta)$ for some $0<\delta \leq \epsilon$. Since $W_{\bar{R}}(p)=W(p)$ at $p \in \partial \Omega $ by \eqref{condNabla} (this is clear using the definition of $\overline{\tau}(\mathcal{U})$), Theorem \ref{theor_Gradient} and the expansions of $W$ and $W_{\bar{R}}$ near $p$ imply
\[
\kappa  \leq \pm \frac{ \Psi}{\sqrt{1-\Psi^2}} \frac{\sqrt{W_{\bar{R}}}}{\sqrt{W}}  -\frac{2 \bar{\alpha}}{(1-\Psi^2) \sqrt{W}} \text{ at } p,
\]
where the sign is positive if $\overline{\tau}(\mathcal{U}) \geq \tau_0$ and negative if $\overline{\tau}(\mathcal{U}) < \tau_0$. Finally, since 
$$\sqrt{W}(p)=\sqrt{W_{\bar{R}}} (p) = \pm \frac{\bar{\alpha}}{\bar{r}_{\pm} \sqrt{1-  \bar{r}_{\pm}^2}} \text{ and }\Psi (p) = \bar{r}_{\pm},$$we conclude the following estimate for the curvature of the zero level set of $\mathcal{U}$:
\[
\kappa (p) \leq \begin{cases} -\frac{\bar{r}_{+}}{\sqrt{1-\bar{r}_{+}^2}}, & \mbox{if } \overline{\tau}(\mathcal{U}) \geq \tau_0 \\ \frac{\bar{r}_{-}}{\sqrt{1-\bar{r}_{-}^2}}, & \mbox{if } \overline{\tau}(\mathcal{U}) < \tau_0. \end{cases}
\]
\end{proof}

\begin{proposition}\label{prop_curvatureMaxSet}
Let $(\Omega, \xi)$ be a solution to \eqref{DPS}, $\mathcal{U} \subset \Omega \setminus  \textup{Max} (\xi)$ a connected component and $\bar R= R(\mathcal{U})$ the expected critical height of the region. Let $(\bar \Omega, \bar \xi)$ be its associated model solution in $\mathcal{U}$. Let $\gamma \subset \textup{cl}(\U) \cap \textup{Max}(\xi)$ be an analytic curve and $p \in \gamma$. Then if $\kappa (p)$ denotes the curvature of $\gamma$ at $p$ with respect the inner orientation to $\mathcal{U}$, it holds
\begin{equation*}
	\kappa (p) \leq \frac{\bar R}{\sqrt{1-\bar{R}^2}} \quad \textup{if } \overline{\tau} (\mathcal{U}) \geq \tau_0 \quad \textup{and} \quad \kappa (p)\leq -\frac{\bar{R}}{\sqrt{1-\bar{R}^2}} \quad \textup{if } \overline{\tau} (\mathcal{U}) < \tau_0.
\end{equation*}
\end{proposition}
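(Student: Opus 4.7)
The plan is to imitate the strategy used in Proposition \ref{prop_curvatureZeroSets}: start from the comparison $W \leq W_{\bar R}$ on $\mathcal{U}$ given by Theorem \ref{theor_Gradient}, pick $p \in \gamma$, and Taylor-expand both sides along the geodesic $\sigma(s) := \exp_p (\nu s)$, where $\nu$ is the unit normal to $\gamma$ at $p$ pointing into $\mathcal{U}$. The essential new difficulty is that at $p$ we now have $W(p) = W_{\bar R}(p) = 0$ (because $\nabla\xi = 0$ on $\textup{Max}(\xi)$ and $W_{\bar R}$ vanishes at $\Psi = \bar R$), so the first non-trivial information lives at order $s^3$ rather than $s^1$; the inequality at that order will be exactly the required curvature bound.

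The first step is to read off the Taylor expansion of $\xi$ and of $W$ along $\sigma$. I would work in Fermi coordinates $(s,t)$ based on $\gamma$, with $\partial_s = \nu$ pointing into $\mathcal{U}$, so that the metric of $\s^2$ takes the form $ds^2 + J(s,t)^2\, dt^2$ with $J(0,t) = 1$ and $J_s(0,t) = -\kappa_\gamma(t)$. Since $\xi \equiv \xi_{\textup{max}}$ and $\nabla\xi = 0$ on $\gamma$, differentiating along $t$ at $s=0$ yields $\xi_t(0,t) = \xi_{tt}(0,t) = \xi_{st}(0,t) = 0$. Plugging these identities into the equation $\xi_{ss} + (J_s/J)\xi_s + J^{-2}\xi_{tt} + 2\xi = 0$ at $(0,t)$ gives $\xi_{ss}(0,t) = -2\xi_{\textup{max}}$, and differentiating the equation once more in $s$ gives $\xi_{sss}(0,t) = -2\kappa_\gamma(t)\,\xi_{\textup{max}}$. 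These already imply, at $t = 0$,
\[
\xi_{\textup{max}} - \xi(\sigma(s)) = \xi_{\textup{max}}\, s^2 + \tfrac{\kappa_\gamma}{3}\xi_{\textup{max}}\, s^3 + O(s^4), \qquad W(\sigma(s)) = 4\xi_{\textup{max}}^2\, s^2 + 4\kappa_\gamma \xi_{\textup{max}}^2\, s^3 + O(s^4).
\]

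For the expansion of $W_{\bar R}(\sigma(s))$, the plan is to first Taylor-expand $W_{\bar R}$ and $\xi$ as analytic functions of $z = \Psi - \bar R$ near $z = 0$, and then to eliminate $z$ in favour of $\xi_{\textup{max}} - \xi$. Using the explicit formulas in Proposition \ref{propertiesXi} together with the identity $W_{\bar R} = (1-\Psi^2)\,\xi_\Psi^2$, a direct computation produces
\[
\xi_{\textup{max}} - \xi = \tfrac{\bar\alpha}{(1-\bar R^2)^2}\, z^2 + \tfrac{4\bar\alpha\bar R}{3(1-\bar R^2)^3}\, z^3 + O(z^4), \qquad W_{\bar R} = \tfrac{4\bar\alpha^2}{(1-\bar R^2)^3}\, z^2 + \tfrac{8\bar\alpha^2 \bar R}{(1-\bar R^2)^4}\, z^3 + O(z^4).
\]
Inverting the first relation yields $z = \varepsilon \sqrt{(\xi_{\textup{max}} - \xi)(1-\bar R^2)^2/\bar\alpha}\,\bigl(1 + O(\sqrt{\xi_{\textup{max}} - \xi})\bigr)$ with $\varepsilon = +1$ on the $\Psi_+$ branch and $\varepsilon = -1$ on the $\Psi_-$ branch. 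Substituting this into the expansion of $W_{\bar R}$ and then using the $s$-expansion of $\xi_{\textup{max}} - \xi$ from the previous paragraph, the calculations collapse neatly (all spurious powers of $\bar\alpha$ and $1-\bar R^2$ combine into $\xi_{\textup{max}} = \bar\alpha/(1-\bar R^2)$) to give
\[
W_{\bar R}(\sigma(s)) = 4\xi_{\textup{max}}^2\, s^2 + \left(\tfrac{4\kappa_\gamma}{3}\xi_{\textup{max}}^2 + \varepsilon\,\tfrac{4\bar R\,\xi_{\textup{max}}^2}{\sqrt{1-\bar R^2}}\right) s^3 + O(s^4).
\]

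Comparing the $s^3$-coefficients in $W(\sigma(s)) \leq W_{\bar R}(\sigma(s))$ for $s > 0$ small and dividing through by $\xi_{\textup{max}}^2$ yields $4\kappa_\gamma \leq \tfrac{4}{3}\kappa_\gamma + \varepsilon\,\tfrac{4\bar R}{\sqrt{1-\bar R^2}}$, which is exactly $\kappa_\gamma \leq \varepsilon\,\bar R/\sqrt{1-\bar R^2}$, precisely the two bounds in the statement according to whether $\overline\tau(\mathcal{U}) \gtrless \tau_0$. The main obstacle I anticipate is the algebra of the third paragraph: the non-analytic dependence $\Psi(\xi) \sim \bar R + \varepsilon\sqrt{(\xi_{\textup{max}} - \xi)/A}$ injects a $(\xi_{\textup{max}} - \xi)^{3/2}$ term into $W_{\bar R}$ viewed as a function of $\xi$, and it is exactly the coefficient of this term (together with the sign $\varepsilon$) that controls the curvature bound; keeping careful track of signs and of which inverse branch is used is the delicate book-keeping on which the whole argument hinges.
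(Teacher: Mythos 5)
Your strategy is exactly the one the paper uses: both expansions of $W$ and $W_{\bar R}$ to third order in the distance to $\gamma$, with the curvature bound read off from the cubic coefficients of the inequality $W\le W_{\bar R}$ supplied by Theorem \ref{theor_Gradient}. Your derivation of $\xi_{\textup{max}}-\xi = \xi_{\textup{max}}s^2+\tfrac{\kappa}{3}\xi_{\textup{max}}s^3+O(s^4)$ via Fermi coordinates is a correct, self-contained substitute for the paper's citation of \cite[Theorem 3.1]{Chr}, and your $z$-expansions of $\xi_{\textup{max}}-\xi$ and $W_{\bar R}$ (including the third-order coefficients) are right.

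However, the displayed coefficient of $s^3$ in your expansion of $W_{\bar R}(\sigma(s))$ is wrong, and the final deduction does not follow from your own formulas. Carrying out the inversion carefully: writing $u=\xi_{\textup{max}}-\xi$, one has $z=\varepsilon\sqrt{u/C}-\tfrac{D}{2C^2}u+O(u^{3/2})$ with $C=\bar\alpha/(1-\bar R^2)^2$ and $D=4\bar\alpha\bar R/3(1-\bar R^2)^3$; the order-$u$ correction to $z$ contributes a $u^{3/2}$ term to $z^2$ and cannot be dropped. One then finds
\begin{equation*}
W_{\bar R}=\frac{4\bar\alpha}{1-\bar R^2}\,u+\varepsilon\,\frac{8\bar R\,\bar\alpha^{1/2}}{3(1-\bar R^2)}\,u^{3/2}+O(u^2),
\qquad\text{hence}\qquad
W_{\bar R}(\sigma(s))=4\xi_{\textup{max}}^2 s^2+\Bigl(\tfrac{4\kappa}{3}+\varepsilon\,\tfrac{8\bar R}{3\sqrt{1-\bar R^2}}\Bigr)\xi_{\textup{max}}^2 s^3+O(s^{7/2}),
\end{equation*}
i.e.\ the $\varepsilon$-term is $\tfrac{8}{3}\tfrac{\bar R}{\sqrt{1-\bar R^2}}\xi_{\textup{max}}^2$, not $\tfrac{4\bar R}{\sqrt{1-\bar R^2}}\xi_{\textup{max}}^2$ as you wrote. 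You can sanity-check this against the model solution itself, where $W\equiv W_{\bar R}$ and $\kappa=\varepsilon\bar R/\sqrt{1-\bar R^2}$: the cubic coefficients must then coincide, which they do with $\tfrac{8}{3}$ ($\tfrac{4\kappa}{3}+\tfrac{8\kappa}{3}=4\kappa$) and do not with your constant. Moreover, your closing line is a non sequitur: $4\kappa\le\tfrac{4}{3}\kappa+\varepsilon\tfrac{4\bar R}{\sqrt{1-\bar R^2}}$ gives $\kappa\le\tfrac{3}{2}\varepsilon\bar R/\sqrt{1-\bar R^2}$, not the stated bound — and in the case $\overline\tau(\mathcal U)<\tau_0$ that would be strictly stronger than the sharp bound and therefore false. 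With the corrected coefficient the comparison reads $\tfrac{8}{3}\kappa\le\varepsilon\tfrac{8}{3}\tfrac{\bar R}{\sqrt{1-\bar R^2}}$ and the proposition follows exactly as in the paper.
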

\begin{proof}
As in the proof of Proposition \ref{prop_curvatureZeroSets}, we assume again that the solution $\xi$ is normalized. To estimate the geodesic curvature of $\gamma$ we use \cite[Theorem 3.1]{Chr}. If we denote $$\rho (x)= \textup{dist}(x,\gamma), \quad x \in \mathcal{U},$$ then
\[
\xi=\xi_{\textup{max}} - \frac{\varphi}{2}\rho^2-\frac{\varphi (\xi_{\textup{max}})}{6} \kappa \, \rho^3+ O(\rho^4),
\]
where $\varphi$ is such that $\Delta \xi= -\varphi(\xi)$. Replacing the explicit terms in the previous expansion, we obtain:
\[
\xi=\frac{\bar{\alpha}}{1-\bar{R}^2}-\frac{\bar{\alpha}}{1-\bar{R}^2}\rho^2-\frac{\bar{\alpha}}{3(1-\bar{R}^2)} \kappa \, \rho^3+ O(\rho^4)
\]
in a certain neighborhood of $\gamma$ inside $\U$. Computing the gradient of the previous expression, we get
\[
\nabla \xi= -\frac{\bar{\alpha}}{1-\bar{R}^2} \rho \left(2+ \kappa \, \rho \right)\partial_\rho+O(\rho^3),
\]
and then we conclude that
\begin{equation}\label{TaylorW}
	W= \frac{4\bar{\alpha}^2 \rho^2}{(1-\bar{R}^2)^2}\left(1+\kappa \, \rho\right)+O(\rho^4).
\end{equation}

We want to obtain an expansion of $W_{\bar{R}}$ as the previous one. To achieve this, we use the Taylor expansions obtained in the proof of Lemma \ref{lemmaWR}. Set $z=\Psi-\bar{R}$. Observe that from \eqref{expansionXi} it is easy to check that
\[
\frac{\xi_{\textup{max}}-\xi}{z^2}=\frac{\xi_{\textup{max}}-\xi}{(\Psi-\bar R)^2} \rightarrow \textup{constant, as }  \Psi \rightarrow \bar{R}, 
\]
so we conclude that
\begin{equation}\label{expZ}
	z=\pm \frac{1-\bar{R}^2}{\sqrt{\bar{\alpha}}}\sqrt{\xi_{\textup{max}}-\xi}+ O \left(\xi_{\textup{max}}-\xi\right)^{3/2},
\end{equation}
where the sign of the first term is positive if $\overline{\tau}(\U)\geq \tau_0$ and negative otherwise.

We will need the third order term in the expansion \eqref{expansionWR}, so we compute the third derivative of $W_{\bar{R}}$. A straightforward computation shows
\[
\frac{\partial^3 W_{\bar{R}}}{\partial \Psi^3} (\bar{R})=-12 \bar{R} \frac{4 \bar{\alpha}^2}{(1-\bar{R}^2)^4}+6 (1-\bar{R}^2)\left(\frac{2 \bar{\alpha}}{(1-\bar{R}^2)^2} \frac{8 \bar{\alpha} \bar{R}}{(1-\bar{R}^2)^3}\right)=\frac{48 \bar{R} \bar{\alpha}^2}{(1-\bar{R}^2)^4},
\]
so the expansion of $W_{\bar{R}} (\Psi)$ at $\Psi=\bar{R}$ up to third order is given by
\[
W_{\bar{R}}= \frac{4 \bar{\alpha}^2}{(1-\bar{R}^2)^3}z^2+\frac{8\bar{R} \bar{\alpha}^2}{(1-\bar{R}^2)^4}z^3+O(z^4).
\]
Substituting \eqref{expZ} in the previous expansion we get
\begin{equation*}
	W_{\bar{R}} =\frac{4 \bar{\alpha}}{1-\bar{R}^2}(\xi_{\textup{max}}-\xi) \pm \frac{8 \bar{R}\bar{\alpha}^{1/2}}{1-\bar{R}^2}(\xi_{\textup{max}}-\xi)^{3/2}+O(\xi_{\textup{max}}-\xi)^2,
\end{equation*}
using again \cite[Theorem 3.1]{Chr} and simplifying we conclude that
\begin{equation}\label{TaylorWR}
	W_{\bar{R}} =  \frac{4 \bar{\alpha}^2}{(1-\bar{R}^2)^2} \rho^2 \left(1+\left(\frac{1}{3} \kappa  \pm \frac{2}{3} \frac{\bar{R}}{\sqrt{1-\bar{R}^2}} \right)\rho \right)+ O(\rho^{7/2}).
\end{equation}

Finally, we proceed as at the end of the proof of Proposition \ref{prop_curvatureZeroSets}. By Theorem \ref{theor_Gradient} we have that $W \leq W_{\bar{R}}$ in $\mathcal{U}$, comparing the expansions near $\gamma$,  \eqref{TaylorW} and \eqref{TaylorWR}, we have that
\[
 \kappa (p) \leq \frac{\kappa (p)}{3} \pm \frac{2}{3} \frac{\bar{R}}{\sqrt{1-\bar{R}^2}} ,
\]that is,
\[
\kappa (p) \leq \begin{cases} \frac{\bar{R}}{\sqrt{1-\bar{R}^2}}, & \mbox{if } \overline{\tau}(\mathcal{U}) \geq \tau_0 \\ -\frac{\bar{R}}{\sqrt{1-\bar{R}^2}}, & \mbox{if } \overline{\tau}(\mathcal{U}) < \tau_0. \end{cases}
\]
\end{proof}

At this point, we can announce the following result:
\begin{theorem}\label{curvature_Theorem}
Let $(\Omega, \xi)$ be a solution to \eqref{DPS}. Assume that there exists an analytic closed curve $\gamma \subset \textup{Max}(\xi)$, let $\Omega^\gamma_1, \Omega^\gamma_2 \subset \Omega$ be the partition with respect to $\gamma$ and set $R_{1}=R(\Omega^\gamma_{1})$ and $R_{2}=R(\Omega^\gamma_{2})$ the expected critical heights associated to that regions.
\begin{itemize}
\item If $\overline{\tau}(\Omega^\gamma_{1}) \leq \tau_0$, then $\overline{\tau}(\Omega^\gamma_{2}) \geq \tau_0$ and for each $p \in \gamma$ it holds
\begin{equation}\label{curvatureTheorem}
	\frac{R_1}{\sqrt{1-R_{1}^2}} \leq \kappa (p) \leq \frac{R_2}{\sqrt{1-R_{2}^2}},
\end{equation}
where $\kappa(p)$ is the geodesic curvature of $\gamma$ in $p$ with respect to the normal vector pointing to the region $\Omega^\gamma_{2}$.
\item If $\overline{\tau} (\Omega^\gamma_{1})=\overline{\tau} (\Omega^\gamma_{2})=\tau_0$, then $\kappa = 0$ along $\gamma$.
\end{itemize}

In particular, $R_2 \geq R_1$ and equality holds if, and only if, $(\Omega, \xi) \equiv (\bar \Omega, \bar \xi)$, where $(\bar \Omega, \bar \xi)$ is the associated model solution given in Definition \ref{associatedSolution}.
\end{theorem}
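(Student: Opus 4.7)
The plan is to apply Proposition~\ref{prop_curvatureMaxSet} to $\gamma$ from both sides $\Omega_1^\gamma$ and $\Omega_2^\gamma$ at once, and exploit the fact that the two ``inner orientations'' along $\gamma$ are opposite. Write $\kappa(p)$ for the geodesic curvature of $\gamma$ at $p$ with respect to the normal pointing into $\Omega_2^\gamma$; then the curvature with respect to the inner orientation to $\Omega_1^\gamma$ is $-\kappa(p)$.

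First I would check the dichotomy. Suppose, for contradiction, that $\overline\tau(\Omega_1^\gamma)\leq\tau_0$ but $\overline\tau(\Omega_2^\gamma)<\tau_0$. Applied to $\Omega_2^\gamma$ the proposition gives $\kappa(p)\leq -R_2/\sqrt{1-R_2^2}<0$ (strict since $\overline\tau<\tau_0$ forces $R_2>0$). Applied to $\Omega_1^\gamma$ with the opposite orientation it gives $\kappa(p)\geq R_1/\sqrt{1-R_1^2}\geq 0$, covering both subcases $\overline\tau(\Omega_1^\gamma)<\tau_0$ and $=\tau_0$. These two bounds are incompatible, so $\overline\tau(\Omega_2^\gamma)\geq\tau_0$. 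With the dichotomy in hand, the same two applications of the proposition now yield
\[
\frac{R_1}{\sqrt{1-R_1^2}}\leq \kappa(p)\leq \frac{R_2}{\sqrt{1-R_2^2}},
\]
which is \eqref{curvatureTheorem}. In the borderline case $\overline\tau(\Omega_1^\gamma)=\overline\tau(\Omega_2^\gamma)=\tau_0$ both $R_i=0$, so $\kappa\equiv 0$. Strict monotonicity of $x\mapsto x/\sqrt{1-x^2}$ on $[0,1)$ then gives $R_2\geq R_1$.

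The rigidity assertion is the delicate part. If $R_1=R_2=:R$, then \eqref{curvatureTheorem} collapses to $\kappa(p)\equiv R/\sqrt{1-R^2}$ along $\gamma$, meaning the upper bound in Proposition~\ref{prop_curvatureMaxSet} is attained everywhere on $\gamma$ from the $\Omega_2^\gamma$ side. Tracing that proposition's proof, attainment of the bound forces the $\rho^3$-coefficients in the expansions \eqref{TaylorW} of $W$ and \eqref{TaylorWR} of $W_{\bar R}$ at $\gamma$ to agree, so $W-W_{\bar R}=O(\rho^{7/2})$ near $\gamma$; equivalently, since $\beta\sim 1/\rho$, $F_\beta=\beta(W-W_{\bar R})=O(\rho^{5/2})$. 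Combined with $F_\beta\leq 0$ in $\Omega_2^\gamma$ and $F_\beta\to 0$ at the top level set already established in the proof of Theorem~\ref{theor_Gradient}, this enhanced rate of vanishing should upgrade the maximum principle to $F_\beta\equiv 0$, i.e., $W\equiv W_{\bar R}$ in $\Omega_2^\gamma$; the rigidity clause of Theorem~\ref{theor_Gradient} then delivers $(\Omega,\xi)\equiv(\bar\Omega,\bar\xi)$.

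I expect this last upgrade to be the main obstacle. The elliptic inequality \eqref{ineqF} satisfied by $F_\beta$ has zeroth-order coefficient $\sim 1/((1-\Psi^2)\xi-\bar\alpha)^4$, which blows up as $\Psi\to\bar R$, so Hopf's boundary-point lemma does not apply off the shelf at $\gamma$. One workaround is to cash in the higher-order vanishing $F_\beta=O(\rho^{5/2})$ to run a refined Hopf-type barrier on shrinking tubular neighborhoods of $\gamma$. A possibly cleaner route is to observe that constant geodesic curvature $R/\sqrt{1-R^2}$ forces $\gamma$ to be a parallel in $\mathbb{S}^2$, and then appeal to real analyticity: the Taylor expansion of any solution of $\Delta\xi+2\xi=0$ at such a maximal-value parallel is determined order by order by $\xi_{\max}$ and the geometry of $\gamma$, hence must coincide with that of $\bar\xi$, and unique continuation propagates $\xi\equiv\bar\xi$ to all of $\Omega$.
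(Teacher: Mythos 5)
Your treatment of the dichotomy, the two-sided curvature inequality, and the deduction $R_2\geq R_1$ is exactly the paper's argument: apply Proposition \ref{prop_curvatureMaxSet} to both $\Omega_1^\gamma$ and $\Omega_2^\gamma$ and use that the two inner orientations along $\gamma$ are opposite. For the rigidity clause, your first route (forcing $W-W_{\bar R}=O(\rho^{7/2})$ and upgrading the maximum principle for $F_\beta$) is not what the paper does and carries the genuine obstacle you yourself flag --- the degenerate zeroth-order coefficient at $\gamma$ --- so as written it is a gap; but your ``possibly cleaner route'' is precisely the paper's proof: constant geodesic curvature makes $\gamma$ a parallel at height $\bar R$, the Cauchy data $\xi=\xi_{\textup{max}}$, $\partial\xi/\partial\nu=0$ on the non-characteristic analytic curve $\gamma$ determines the solution by Cauchy--Kovalevskaya, this data is shared with the (normalized, suitably rotated) model $\bar\xi$, and analyticity propagates $\xi\equiv\bar\xi$ to all of $\Omega$. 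Commit to that second route and the proof is complete; the same Cauchy--Kovalevskaya step also handles the case $\overline{\tau}(\Omega_1^\gamma)=\overline{\tau}(\Omega_2^\gamma)=\tau_0$, where $\kappa\equiv 0$ forces $\gamma$ to be an equator.
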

\begin{proof}
 First, since we are assuming that $\overline{\tau} (\Omega^\gamma_{1})\leq\tau_0$, by Proposition \ref{prop_curvatureMaxSet} we conclude that
\begin{equation*}
	0 \leq \frac{R_1}{\sqrt{1-R_1^2}} \leq \kappa (p) \quad \forall p \in \gamma,
\end{equation*}
so, in particular, $\kappa \geq 0$ with respect to the normal orientation pointing to $\Omega^\gamma_{2}$. Then $\overline \tau (\Omega^\gamma_{2}) \geq \tau_0$, because otherwise, using Proposition \ref{prop_curvatureMaxSet} with this region, we would conclude that $\kappa < 0$ along $\gamma$, a contradiction. It follows also that it must be $R_1 \geq R_2$, and thus we obtain the chain of inequalities \eqref{curvatureTheorem}. 

Assume now that $R_1=R_2=\bar R$, then $\gamma$ must have constant geodesic curvature, so it must be a parallel at height $\bar R$. Let us suppose that $\xi$ is normalized. Then, we get that $(\Omega, \xi)$ is a model solution as follows: take $\mathcal{N}$ a tubular neighborhood of $\gamma$ of radius $\epsilon< \min{(-\bar r_-, \bar r_+)}$, then $\xi$ is a solution to 
\[
\left\{	\begin{matrix}
	\Delta \xi + 2 \xi=0 & \text{ in } & \mathcal{N} ,\\[2mm]
	\xi =  \xi_{\textup{max}}& \text{ along } & \gamma  ,\\[2mm]
	\frac{\partial \xi}{ \partial \nu}= 0 & \text{ along } & \gamma.
\end{matrix}\right.
\]
Since $\gamma$ is non-characteristic, it follows from the Cauchy-Kovalevskaya theorem (cf. \cite{Kr}) that $\xi$ is the unique solution to the previous problem. But the associated model solution $(\bar \Omega, \bar \xi)$ is also a solution to the previous problem, so we conclude that $\xi \equiv \bar \xi$ in $\mathcal{N}$ (up to a rotation if necessary). It is clear then that it must be $(\Omega, \xi) \equiv (\bar \Omega, \bar \xi)$ by analyticity.

Finally, consider the case $\overline{\tau} (\Omega^\gamma_{1})=\overline{\tau} (\Omega^\gamma_{2})=\tau_0$. It follows from Proposition \ref{prop_curvatureMaxSet} that $\kappa \leq 0$ with respect to the inner and to the outer normal to $\Omega^\gamma_{2}$ simultaneously, so it must be $\kappa =0$ along $\gamma$. Then we get that it must be $(\Omega, \xi) \equiv (\Omega_0, \xi_0)$ using the same argument as in the previous case.
\end{proof}

\subsection{Length estimates}

We show here a relation between the length of the zero and top level sets: 
\begin{proposition}\label{lengthBounds}
	Let $(\Omega, \xi)$ be a solution to \eqref{DPS}, $\mathcal{U} \subset \Omega \setminus  \textup{Max} (\xi)$ a connected component and $\bar R= R(\mathcal{U})$ the expected critical height of the region. Suppose that $\textup{cl}(\U) \cap \textup{Max}(\xi)= \gamma^{\U}$ and $\textup{cl}(\U) \cap \partial \Omega = \Gamma^{\U}$ are sets of analytic closed curves. Then 
	\begin{equation}\label{lengthBounds}
		\frac{\abs{\gamma^{\U}}}{\sqrt{1- \bar{R}^2}} \leq \frac{\abs{\Gamma^{\U}}}{\sqrt{1-r_+^2} } \quad \textup{if } \overline{\tau} (\mathcal{U}) \geq \tau_0 \quad \textup{and} \quad 		\frac{\abs{\gamma^{\U}}}{\sqrt{1- \bar{R}^2}} \leq \frac{\abs{\Gamma^{\U}}}{\sqrt{1-r_-^2} } \quad \textup{if } \overline{\tau} (\mathcal{U}) < \tau_0,
	\end{equation}
	where $\abs{\gamma^{\U}}$ and $\abs{\Gamma^{\U}}$ denotes the sum of the lengths of each set of curves. 
\end{proposition}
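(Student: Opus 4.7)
I would treat the case $\overline{\tau}(\U) \geq \tau_0$ first; the case $\overline{\tau}(\U) < \tau_0$ is analogous, with $\chi_-$ in place of $\chi_+$ and the corresponding sign for the bound from Proposition \ref{prop_curvatureMaxSet}. After rescaling, assume $\xi$ is normalized, so Theorem \ref{theor_Gradient} yields $W \leq W_{\bar R}$ in $\U$, which translates via the pseudo-radial function $\Psi = \chi_+(\xi) \colon \U \to [\bar R, r_+]$ into the gradient estimate
\[
|\nabla \Psi|^2 \leq 1 - \Psi^2 \quad \text{in } \U,
\]
with equality in the model. Recall $\Psi \equiv \bar R$ on $\gamma^\U$ and $\Psi \equiv r_+$ on $\Gamma^\U$.

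The plan is to show that the normalized length function
\[
\Lambda(\rho) := \frac{L(\rho)}{\sqrt{1-\rho^2}}, \qquad L(\rho) := \bigl| \{\Psi = \rho\} \cap \textup{cl}(\U) \bigr|,
\]
is non-decreasing on $[\bar R, r_+]$. Since $\Lambda(\bar R) = |\gamma^\U|/\sqrt{1-\bar R^2}$ and $\Lambda(r_+) = |\Gamma^\U|/\sqrt{1-r_+^2}$, monotonicity gives \eqref{lengthBounds}; one may check that $\Lambda \equiv 2\pi$ in the model.

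\textit{Step 1.} Propagating a regular level set of $\Psi$ along the gradient flow of $\Psi$, the first variation formula for lengths gives
\[
L'(\rho) = -\int_{\{\Psi = \rho\}} \frac{\kappa_\Psi}{|\nabla \Psi|}\,d\sigma,
\]
where $\kappa_\Psi$ is the geodesic curvature of $\{\Psi = \rho\}$ with respect to $\nu_\Psi = \nabla \Psi/|\nabla \Psi|$.

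\textit{Step 2.} Applying Gauss-Bonnet to the annular region $\mathcal{A}_\rho := \{\Psi < \rho\} \cap \U$, whose boundary is $\gamma^\U \cup \{\Psi = \rho\}$ (arguing component by component if $\gamma^\U$ has multiple loops), and using the curvature bound at $\gamma^\U$ from Proposition \ref{prop_curvatureMaxSet}, one obtains
\[
\int_{\{\Psi = \rho\}} \kappa_\Psi\,d\sigma \leq \textup{Area}(\mathcal{A}_\rho) + \frac{\bar R}{\sqrt{1-\bar R^2}} \, |\gamma^\U|,
\]
with equality in the model. Furthermore, Cauchy-Schwarz on the level set $\{\Psi = \rho\}$ combined with the gradient bound $|\nabla \Psi| \leq \sqrt{1-\rho^2}$ yields
\[
\int_{\{\Psi = \rho\}} \frac{d\sigma}{|\nabla\Psi|} \geq \frac{L(\rho)^2}{\int_{\{\Psi = \rho\}} |\nabla \Psi|\,d\sigma} \geq \frac{L(\rho)}{\sqrt{1-\rho^2}} = \Lambda(\rho),
\]
and hence, by the coarea formula applied to $\Psi$, $\textup{Area}(\mathcal{A}_\rho) \geq \int_{\bar R}^\rho \Lambda(\rho')\,d\rho'$.

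\textit{Step 3.} Combining Steps 1-2 with a model comparison, one deduces the differential inequality $\Lambda'(\rho) \geq 0$, which integrates from $\bar R$ to $r_+$ to the conclusion.

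The principal difficulty is in closing Step 3. The Gauss-Bonnet estimate controls $\int \kappa_\Psi\,d\sigma$, but the integrand $\kappa_\Psi / |\nabla \Psi|$ entering $L'(\rho)$ carries a weight $1/|\nabla\Psi|$ that the gradient bound only controls from below, not above, by $1/\sqrt{1-\rho^2}$. Passing from the bound of Step 2 to a lower bound on $\Lambda'(\rho)$ thus requires additional work: the resolution is a pointwise comparison with the model along gradient flow lines of $\Psi$, using the ODE
\[
\partial_\rho \ln J = -\frac{\kappa_\Psi}{|\nabla\Psi|}
\]
for the Jacobian $J$ of the flow, which in the model integrates to the identity $\Lambda \equiv 2\pi$ and therefore serves as the extremal profile against which the general solution is measured.
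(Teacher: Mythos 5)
Your proposal follows a genuinely different route from the paper, but it does not close: the gap you yourself flag at Step 3 is fatal as written. The monotonicity of $\Lambda(\rho)=L(\rho)/\sqrt{1-\rho^2}$ requires a \emph{lower} bound on $L'(\rho)=-\int_{\{\Psi=\rho\}}\kappa_\Psi/\abs{\nabla\Psi}$, i.e.\ an \emph{upper} bound on $\int_{\{\Psi=\rho\}}\kappa_\Psi/\abs{\nabla\Psi}\,d\sigma$. Gauss--Bonnet only controls the unweighted integral $\int\kappa_\Psi\,d\sigma$, and the available gradient estimate $\abs{\nabla\Psi}\le\sqrt{1-\rho^2}$ bounds the weight $1/\abs{\nabla\Psi}$ from \emph{below}; since $\kappa_\Psi$ has no pointwise sign on an interior level set, there is no way to transfer the unweighted bound to the weighted one. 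The ``resolution'' you sketch --- integrating $\partial_\rho\ln J=-\kappa_\Psi/\abs{\nabla\Psi}$ along gradient flow lines and comparing with the model --- is not an argument: it would require a pointwise estimate on $\kappa_\Psi/\abs{\nabla\Psi}$ at interior level sets, and the only curvature estimates established (Propositions \ref{prop_curvatureZeroSets} and \ref{prop_curvatureMaxSet}) hold exclusively at the two extreme level sets $\partial\Omega$ and $\textup{Max}(\xi)$, where $W=W_{\bar R}$ is attained or approached; at interior levels one only knows $W\le W_{\bar R}$, which gives no curvature information. A further obstruction is that $\U$ may contain saddle points of $\xi$, so $\Psi$ has interior critical values, the level sets change topology, and neither the first-variation formula nor the Gauss--Bonnet bookkeeping for the regions $\mathcal{A}_\rho$ (which need not be annuli) survives without substantial extra work.

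The paper avoids all of this with a single integration by parts: one applies the divergence theorem to the vector field $\nabla\xi/\bigl((1-\Psi^2)\xi-\bar\alpha\bigr)$ over $\U_\epsilon$, observes that the resulting interior term is a sign-definite weight times $(W-W_{\bar R})$ --- hence has a sign by Theorem \ref{theor_Gradient} --- and evaluates the two boundary integrals directly: on $\Gamma^\U$ using $\abs{\nabla\xi}\le\max_{\Gamma^\U}\abs{\nabla\xi}$ and the value of the NWSS, and on $\gamma^\U_\epsilon$ using $W/W_{\bar R}\to1$ near the top level set. This yields \eqref{lengthBounds} in one step, with no need for level-set regularity away from the two extremes. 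If you want to salvage your approach you would essentially have to prove a pointwise interior comparison that the paper never establishes; as it stands, Steps 1--2 are fine but the conclusion does not follow from them.
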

\begin{proof}
	We follow the proof of \cite[Proposition 5.5]{ABM}. Assume that $\xi$ is normalized in the sense of Definition \ref{normalizedSolution}. Given $\epsilon>0$, set $\U_{\epsilon}$ as in \eqref{innerRegionEpsilon} (as always, we suppose that $\xi_{\textup{max}}-\epsilon$ is regular). Then, by the divergence theorem, it holds:
	\begin{equation*}
		\begin{split}
			\int_{\U_{\epsilon}} \frac{-2 \xi}{(1-\Psi^2) \xi- \bar \alpha} \, dA &= \int_{\U_{\epsilon}} \frac{\Delta \xi}{(1-\Psi^2) \xi- \bar \alpha} \, dA \\
			&= \int_{\U_{\epsilon}} \pscalar{\frac{\nabla \xi}{(1-\Psi^2)\xi- \bar \alpha}}{\nabla\xi} \, dA+\int_{\partial \U_{\epsilon}} \frac{\pscalar{\nabla\xi}{\nu}}{(1-\Psi^2) \xi- \bar \alpha} \, ds,
		\end{split}
	\end{equation*}
	where $\Psi$ is the pseudo-radial function given in Definition \ref{psudoRadialFunctions}, $\bar \alpha=\alpha (\bar R)$ as in Proposition \ref{prop_2.1} and $\nu$ is the inner unit normal to $\U_{\epsilon}$. Thus, on the one hand 
	\[
	\nabla \frac{\Psi}{(1-\Psi^2) \xi- \bar \alpha}= \frac{2 \Psi^3 (1-\Psi^2) \xi}{\left((1-\Psi^2)\xi - \bar \alpha\right)^3} \nabla \xi
	\]
	and on the other hand
	\[
	\frac{\left((1-\Psi^2)\xi - \bar \alpha\right)^3}{2 \Psi^3 (1-\Psi^2) \xi} \cdot \frac{2 \Psi \xi}{(1-\Psi^2)\xi-\bar \alpha} = \frac{\left((1-\Psi^2)\xi - \bar \alpha\right)^2}{\Psi^2 (1-\Psi^2)}= W_{\bar{R}},
	\]
	so we obtain the identity
	\begin{equation}\label{0}
		\int_{\U_{\epsilon}} \frac{2 \Psi^3 (1-\Psi^2) \xi}{\left((1-\Psi^2)\xi - \bar \alpha\right)^3} (W-W_{\bar{R}}) \, dA = -\int_{\Gamma_{\U}} \frac{\abs{\nabla \xi} \Psi}{(1-\Psi^2)\xi - \bar \alpha} \, ds + \int_{\gamma^{\U}_{\epsilon}} \frac{\abs{\nabla \xi} \Psi}{(1-\Psi^2)\xi - \bar \alpha} \, ds,
	\end{equation}
	where $\gamma^{\U}_{\epsilon}=\{p \in \U ~\colon~ \xi(p)+\epsilon=\xi_{\textup{max}} \}$. It is clear that for $\epsilon>0$ small enough,  $\gamma^{\U}_{\epsilon}$ is a set of analytic curves, and $\gamma_{\epsilon}^\U \to \gamma^\U$ when $\epsilon \to 0$.
	
Next, we analyze both sides of the previous identity. We begin with the right-hand side. Since $\Psi=r_{\pm}$ and $\max_{\Gamma^{\U}} \abs{\nabla \xi} = \pm \bar \alpha / r_{\pm} \sqrt{1-r_{\pm}^2}$, it is clear that
	\begin{equation}\label{1}
		\int_{\Gamma^{\U}} \frac{\abs{\nabla \xi }\Psi}{(1-\Psi^2)\xi- \bar \alpha} \, ds= \mp \frac{1}{\sqrt{1-r_{\pm}^2}} \int_{\Gamma^{\U}} \frac{\abs{\nabla \xi}}{\max_{\Gamma^{\U}} \abs{\nabla \xi}}
		\left\{	\begin{matrix}
			\geq - \frac{\abs{\Gamma^{\U}}}{\sqrt{1-r_+^2}} & \text{ if } & \overline{\tau} (\mathcal{U}) \geq \tau_0 ,\\[2mm]
			\leq \frac{\abs{\Gamma^{\U}}}{\sqrt{1-r_-^2}} & \text{ if } & \overline{\tau} (\mathcal{U}) < \tau_0.
		\end{matrix}\right.
	\end{equation}
Also, note that by the Taylor expansions given in \eqref{TaylorW} and \eqref{TaylorWR} we have
\[
\lim_{p \to \gamma^{\U}} \frac{W}{W_{\bar{R}}} = 1.
\]  
Since
\[
\frac{\abs{\nabla \xi }\Psi}{(1-\Psi^2)\xi- \bar \alpha} = \mp \frac{1}{\sqrt{1-\Psi^2}} \sqrt{\frac{W}{W_{\bar{R}}}},
\]
(as always, the upper sign corresponds to the case $\overline{\tau}(\U) \geq \tau_0$ and the lower sign otherwise) we conclude that
\begin{equation}\label{2}
 \lim_{\epsilon \to 0^+} \int_{\gamma^{\U}_{\epsilon}} \frac{\abs{\nabla \xi} \Psi}{(1-\Psi^2)\xi - \bar \alpha} \, ds = \mp \frac{\abs{\gamma^{\U}}}{1-\bar{R}^2}.
\end{equation}

Now, we look at the left-hand side of \eqref{0}. Note that 
\[
\frac{\Psi^3 (1-\Psi^2) \xi}{\left((1-\Psi^2)\xi - \bar \alpha\right)^3}= \mp \frac{1}{ W_{\bar R} ^{3/2}  \sqrt{1-\Psi^2}},
\]
therefore, using Theorem \ref{theor_Gradient}, we conclude that
\begin{equation}\label{3}
\int_{\U_{\epsilon}} \frac{2 \Psi^3 (1-\Psi^2) \xi}{\left((1-\Psi^2)\xi - \bar \alpha\right)^3} (W-W_{\bar{R}}) \, dA
\left\{	\begin{matrix}
\geq 0& \text{ if } & \overline{\tau} (\mathcal{U}) \geq \tau_0 ,\\[2mm]
	\leq 0 & \text{ if } & \overline{\tau} (\mathcal{U}) < \tau_0.
\end{matrix}\right.
\end{equation}

Finally, combining \eqref{1}, \eqref{2} and \eqref{3} we obtain the inequalities \eqref{lengthBounds}.
\end{proof}


\section{Overdetermined solutions}\label{sec_5}

Let $\Omega \subset \s ^2$ be a domain of finite type with $k \geq 2$ boundary components and consider the overdetermined elliptic problem
\begin{equation}\label{OEP}
	\left\{	\begin{matrix}
		\Delta \xi + 2 \xi=0 & \text{ in } & \Omega ,\\[2mm]
		\xi >0 & \text{ in } & \Omega  ,\\[2mm]
		\xi = 0 & \text{ along } & \partial \Omega, \\[2mm]
		\abs{\nabla \xi} = b_i >0 &  \text{ along } & \Gamma_i, & \text{ for }  i \in \{1, \dots, k\},
	\end{matrix}\right.
\end{equation}
where we have that $\partial \Omega= \bigcup_{i=1}^k \Gamma_{i}$ using the notation introduced in Definition \ref{finiteTypeDomain}.
\begin{remark}
When $\Omega$ is a domain of finite type with $k =1$, i.e., a topological disk, a solution $\xi $ to \eqref{OEP} must be rotationally symmetric and $\Omega$ a geodesic disk by \cite{EM,Sou}, that is, $(\xi , \Omega) \equiv (\xi _1 , \Omega _1)$.
\end{remark}
In this section, we will classify the solutions $(\Omega, \xi)$ to \eqref{OEP} which have infinitely many maximum points. To do so, we will use the estimates computed in Section \ref{sec_4} and a length estimate that uses the overdetermined condition and Proposition \ref{prop_curvatureZeroSets}.
\begin{proposition}\label{lengthZeroLevelSets}
Let $(\Omega, \xi)$ be a solution to \eqref{OEP} and $\mathcal{U} \subset \Omega \setminus  \textup{Max} (\xi)$ an annular connected component, i.e., homeomorphic to an annulus, such that $\partial \mathcal U = \Gamma \cup \gamma$, $\Gamma \subset \partial \Omega$ and $\gamma \subset {\rm Max}(\xi)$. Then, it holds
\begin{equation*}
	\abs{\Gamma} \leq 2 \pi \sqrt{1-r_+^2} \quad \textup{if } \overline{\tau} (\mathcal{U}) \geq \tau_0 \quad \textup{and} \quad 	\abs{\Gamma} \leq 2 \pi \sqrt{1-r_-^2}\quad \textup{if } \overline{\tau} (\mathcal{U}) < \tau_0.
\end{equation*}
\end{proposition}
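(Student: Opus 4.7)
The plan is to use the overdetermined condition to upgrade the pointwise curvature bound of Proposition \ref{prop_curvatureZeroSets} and then combine Gauss--Bonnet on the ``hole'' bounded by $\Gamma$ with a spherical convexity argument.

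First, I observe that \eqref{OEP} forces $|\nabla \xi|$ to be constant along each connected component of $\partial \Omega$; since $\mathcal U$ is annular with $\Gamma$ a full connected component of $\partial \Omega$, we have $|\nabla \xi| \equiv b > 0$ on $\Gamma$. Hence \emph{every} point of $\Gamma$ realizes $\max_{\partial \Omega \cap \textup{cl}(\mathcal U)} |\nabla \xi|$, so Proposition \ref{prop_curvatureZeroSets} applies pointwise and yields
\[
\kappa_\Gamma(p) \leq -\frac{r_+}{\sqrt{1-r_+^2}} \quad \text{if } \bar\tau(\mathcal U) \geq \tau_0, \qquad \kappa_\Gamma(p) \leq \frac{r_-}{\sqrt{1-r_-^2}} \quad \text{if } \bar\tau(\mathcal U) < \tau_0,
\]
for every $p \in \Gamma$, where curvature is taken with respect to the inner normal to $\mathcal U$. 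This upgrade from a single-point bound to a uniform pointwise bound along $\Gamma$ is the only place the overdetermined hypothesis enters.

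Second, since $\Omega$ is of finite type, the simple closed curve $\Gamma$ bounds a topological disk $D_\Gamma \subset \s^2 \setminus \overline \Omega$. Setting $c = r_+/\sqrt{1-r_+^2}$ in the first case and $c = -r_-/\sqrt{1-r_-^2}$ in the second, the geodesic curvature of $\Gamma$ with respect to the inner normal to $D_\Gamma$ (which is opposite to the inner normal to $\mathcal U$) is pointwise $\geq c > 0$. Applying Gauss--Bonnet on $D_\Gamma$, where $K \equiv 1$ and $\chi(D_\Gamma) = 1$, I obtain
\[
|D_\Gamma| + c\,|\Gamma| \;\leq\; 2\pi,
\]
which, combined with $|D_\Gamma| \geq 0$, already implies the preliminary bound $|\Gamma| \leq 2\pi/c < 2\pi$ (using that $r_+ \geq \bar r > 1/\sqrt 2$).

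Finally, to sharpen this to the stated $|\Gamma| \leq 2\pi/\sqrt{1+c^2}$ (which coincides with $2\pi\sqrt{1-r_+^2}$, respectively $2\pi\sqrt{1-r_-^2}$, since $1+c^2 = 1/(1-r_\pm^2)$), I would invoke a spherical Blaschke-type rolling principle: the pointwise bound $\kappa_g \geq c > 0$ along $\partial D_\Gamma$ forces the strictly convex spherical domain $D_\Gamma$ to have diameter at most $2\rho_0$ with $\cot \rho_0 = c$, hence $D_\Gamma$ sits inside a geodesic disk of radius $\rho_0$; monotonicity of perimeter under spherical convex inclusion then yields $|\Gamma| = |\partial D_\Gamma| \leq 2\pi \sin \rho_0 = 2\pi/\sqrt{1+c^2}$. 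The main obstacle is precisely this final spherical-convexity step, since the naive combination of Gauss--Bonnet with the nonnegativity of $|D_\Gamma|$ falls short of the stated bound by a factor of $r_+$ (respectively $|r_-|$); the extra leverage is purely geometric and comes from the convexity forced by the pointwise curvature bound.
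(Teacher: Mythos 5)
Your proposal is correct and takes essentially the same route as the paper: the overdetermined condition upgrades Proposition \ref{prop_curvatureZeroSets} to a pointwise bound $\abs{\kappa}\geq \abs{r_\pm}/\sqrt{1-r_\pm^2}$ along $\Gamma$, and the final step you flag as the ``main obstacle'' is exactly how the paper concludes, invoking Karcher's spherical version of Blaschke's rolling theorem to enclose the (convex) curve $\Gamma$ in a geodesic disk whose boundary has length $2\pi\sqrt{1-r_\pm^2}$ and then Crofton's formula to compare perimeters. The intermediate Gauss--Bonnet estimate is, as you observe, superfluous.
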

\begin{proof}
First, observe that Proposition \ref{prop_curvatureZeroSets} and the overdetermined condition $\xi =0$ and $\abs{\nabla \xi} = b_i >0 $ along each $\Gamma _i \subset \partial \Omega$ connected component imply that 
$$\abs{\kappa (p)} \geq \abs{\frac{\bar r_\pm}{\sqrt{1-\bar r_\pm}}} \text{ for all } p \in \Gamma $$where $\bar{r}_+ >0$ and $\bar{r}_- < 0$ are defined in \textup{Remark \ref{remarkNotation}} and $\kappa$ is the geodesic curvature of $\Gamma$. Then, by Blaschke turning theorem (generalized to space forms in \cite{Kar}) we conclude that there exists $p \in \mathbb{S}^2$ such that $\Gamma$ is contained in the geodesic disk  $D:=\mathbb{D}(p, |r_{\pm}|)$.  Now, given a closed convex curve $\sigma : I \to \mathbb{S}^2$, Crofton's formula (cf. \cite{montielRos}) tells us that
\[
\textup{Length}(\sigma)= \frac{1}{4} \int_{\mathbb{S}^2} n_{\sigma} (a) \, da,
\]
where $n_{\sigma} (a)$ measures the number of points in which the plane $P_a := <\set{a}>^\perp $, $a \in \mathbb{S}^2$, and the curve $\sigma$ intersects. Since any $P_a$ intersects an embedded strictly convex closed curve in the sphere at most at two points and $\Gamma \subset D$, the result follows. Recall that $\Gamma$ is strictly convex with the correct orientation.
\end{proof}

Now, we have the ingredients to prove our main classification result:

\begin{figure}[htb]
	\centering
	\includegraphics[width=0.5\textwidth]{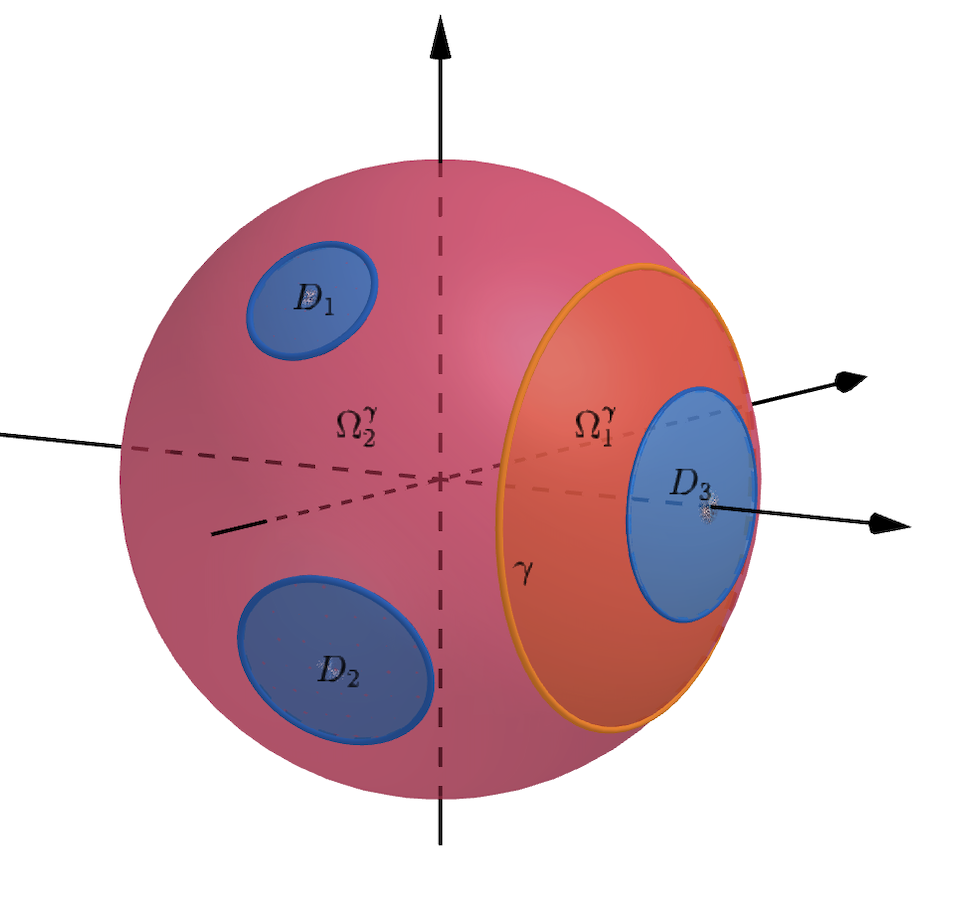}
	\caption{Here, a finite domain in the unit sphere. The domain is $\Omega=\mathbb{S}^2 \setminus \bigcup_{i=1}^3 D_i$, where $D_1, D_2$ and $D_3$ are the blue regions. We have that $\Omega \setminus \gamma= \Omega^\gamma_1 \cup \Omega^\gamma_2$ is a partition of $\Omega$, and the region $\Omega^\gamma_1$ has the topology of an annulus. If $\xi$ solves \eqref{OEP} and $\gamma \in \textup{Max}(\xi)$ then $(\Omega, \xi)$ satisfies the hypothesis of Theorem \ref{rigidityConstantNeumannOEP}.}
	\label{fig:finiteDomain}
\end{figure} 

\begin{theorem}\label{rigidityConstantNeumannOEP}
Let $(\Omega, \xi)$ be a solution to the overdetermined problem \eqref{OEP}, where $\Omega$ is a $\mathcal{C}^2$-domain of finite type with $k \geq 2$ boundary components. Suppose that $\textup{Max}(\xi)$ contains a closed curve $\gamma$ such that the partition with respect to $\gamma$, $\Omega \setminus \gamma = \Omega _1 ^\gamma \cup \Omega _2 ^\gamma$ contains a component, say $\Omega _1 ^\gamma$, that is a topological annulus. Then, $k=2$ and $\Omega$ is rotationally symmetric. In particular, there exists an $R \in [0,1)$ such that $(\Omega, \xi) = (\Omega_R, \xi_R)$ up to a rotation and a dilation.
\end{theorem}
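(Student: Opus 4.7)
The proof follows the hemisphere-containment plus moving planes strategy outlined in the overview of Section \ref{sec_5}. First I fix notation: because $\Omega_1^\gamma$ is a topological annulus, its boundary has exactly two components, $\partial\Omega_1^\gamma=\gamma\cup\Gamma_1$ with $\Gamma_1\in\pi_0(\partial\Omega)$, so the remaining $k-1$ components $\Gamma_2,\dots,\Gamma_k$ lie in $\partial\Omega_2^\gamma$. The closed curve $\gamma$ separates $\s^2$ into two open disks, each $\Omega_i^\gamma$ sitting in one of them. Write $R_i:=R(\Omega_i^\gamma)$ and, after relabelling if necessary, assume $\overline{\tau}(\Omega_1^\gamma)\leq\tau_0$ (the remaining regime, in which both $\overline{\tau}(\Omega_i^\gamma)>\tau_0$, will be ruled out using Propositions \ref{lengthZeroLevelSets} and \ref{lengthBounds} together with the curvature bounds at $\gamma$).

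The first geometric step is to place $\Omega_2^\gamma$ inside a hemisphere. From Theorem \ref{curvature_Theorem} we have $\kappa(p)\geq R_1/\sqrt{1-R_1^2}\geq 0$ at every $p\in\gamma$ with respect to the normal pointing into $\Omega_2^\gamma$, so $\gamma$ is geodesically convex from that side. Blaschke's turning theorem on $\s^2$ (cf.\ \cite{Kar}) then forces $\gamma$ into a geodesic disk $B\subset\s^2$ of spherical radius $\arccos R_1\leq \pi/2$; because $B$ is geodesically convex and $\gamma$ bounds its concave region on the $\Omega_2^\gamma$ side, $\textup{cl}(\Omega_2^\gamma)\subset B$ and thus $\Omega_2^\gamma$ is contained in a closed hemisphere (strictly open when $R_1>0$). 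The length bound $|\gamma|\leq 2\pi\sqrt{1-R_1^2}$ obtained by combining Propositions \ref{lengthZeroLevelSets} and \ref{lengthBounds} applied to $\Omega_1^\gamma$ serves both as a quantitative check on this picture and to exclude the regime in which both $\overline{\tau}(\Omega_i^\gamma)>\tau_0$.

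With $\Omega_2^\gamma$ inside a hemisphere, I apply the Alexandrov--Serrin moving plane method in $\s^2$ (cf.\ \cite{Sou,EM,KP}) to $\xi|_{\Omega_2^\gamma}$. The key point is that $\xi$ carries constant Dirichlet datum and constant Neumann datum on every component of $\partial\Omega_2^\gamma$: $\xi=0$ and $|\nabla \xi|=b_j$ on $\Gamma_j$ for $j\geq 2$, while $\xi=\xi_{\textup{max}}$ and $|\nabla\xi|=0$ on $\gamma$. Sweeping totally geodesic great circles across the hemisphere and comparing $\xi$ with its reflection via the maximum principle and Hopf's lemma at each boundary component produces reflection symmetry of $\xi|_{\Omega_2^\gamma}$ across a critical great circle; varying the sweeping axis upgrades this to full rotational symmetry of $\Omega_2^\gamma$ and of $\xi|_{\Omega_2^\gamma}$ about a single axis of $\s^2$.

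To conclude, a connected rotationally symmetric domain in $\s^2$ whose boundary consists of parallel circles has at most two boundary components, whereas $\partial\Omega_2^\gamma$ has $k$ of them, forcing $k=2$. Finally, I propagate rotational symmetry from $\Omega_2^\gamma$ to all of $\Omega$ by the Cauchy--Kovalevskaya argument used at the end of the proof of Theorem \ref{curvature_Theorem}: in a tubular neighbourhood of the non-characteristic analytic curve $\gamma$, both $\xi$ and the matching model $\xi_R$ solve $\Delta u+2u=0$ with identical Cauchy data $u=\xi_{\textup{max}}$, $\partial_\nu u=0$, so $\xi\equiv\xi_R$ there by \cite{Kr}, hence on all of $\Omega$ by real analyticity. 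The most delicate step is the moving plane argument itself: the ``top-level'' component $\gamma$ carries degenerate boundary data (value equal to the maximum and gradient zero), and one has to execute the Hopf boundary comparison at $\gamma$ as carefully as at the ordinary zero-level components $\Gamma_j$.
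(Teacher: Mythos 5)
Your overall architecture (confine one component of $\Omega\setminus\gamma$ to a hemisphere, run moving planes there, then propagate by Cauchy--Kovalevskaya and analyticity) is the paper's, and your final two steps are essentially correct. But the mechanism you use for the hemisphere confinement has a genuine gap. You reach it through the curvature bound of Theorem \ref{curvature_Theorem} plus Blaschke's rolling theorem, and that route requires one of the two regions to satisfy $\overline{\tau}(\Omega_i^\gamma)\leq\tau_0$: only then does Proposition \ref{prop_curvatureMaxSet} give a sign on $\kappa|_\gamma$. Your ``after relabelling, assume $\overline{\tau}(\Omega_1^\gamma)\leq\tau_0$'' is not available, because the annulus hypothesis singles out $\Omega_1^\gamma$, and Proposition \ref{lengthZeroLevelSets} (hence the length bound $|\gamma|\leq 2\pi\sqrt{1-R_1^2}$ you invoke) is only proved for an \emph{annular} component whose boundary is one curve of $\partial\Omega$ and one curve of ${\rm Max}(\xi)$; relabelling may move the annulus to the other side. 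More importantly, the regime in which \emph{both} $\overline{\tau}(\Omega_i^\gamma)>\tau_0$ is not excluded by anything you cite: in that case Proposition \ref{prop_curvatureMaxSet} applied to each side only yields $-R_1/\sqrt{1-R_1^2}\leq\kappa\leq R_2/\sqrt{1-R_2^2}$ (normal into $\Omega_2^\gamma$), which gives no convexity of $\gamma$ and no contradiction, so your parenthetical promise that this case ``will be ruled out'' is unfulfilled and, as far as I can see, unfulfillable by those propositions.

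The fix is to make the length bound the main tool rather than a ``quantitative check''. Propositions \ref{lengthBounds} and \ref{lengthZeroLevelSets}, applied to the annular component $\Omega_1^\gamma$, give $|\gamma|\leq\sqrt{1-R_1^2}\,|\Gamma_1|/\sqrt{1-r_\pm^2}\leq 2\pi\sqrt{1-R_1^2}\leq 2\pi$ in \emph{both} NWSS regimes (each proposition covers the cases $\overline{\tau}\geq\tau_0$ and $\overline{\tau}<\tau_0$ separately), with no case analysis on $\tau_0$ needed. The classical fact that a closed curve in $\s^2$ of length at most $2\pi$ lies in a closed hemisphere (\cite[Proposition 9.54]{montielRos}) then yields the dichotomy: either $\gamma$ is an equator, which is handled directly by your Cauchy--Kovalevskaya step, or $\gamma$ lies in an open hemisphere, in which case the non-contractibility of $\gamma$ in $\Omega$ (Lemma \ref{lemmaCritic}) forces one of $\Omega_1^\gamma$, $\Omega_2^\gamma$ into that open hemisphere, and the moving plane argument proceeds as you describe. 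Note also that your Blaschke step would in any case only give a \emph{closed} hemisphere when $R_1=0$, a borderline you do not address; the length-plus-Montiel--Ros route isolates exactly that borderline as the equator case.
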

\begin{proof}

By hypothesis,  $\Omega ^\gamma_1$ is a topological annulus and, up to rearranging the indexes if necessary, $\partial \Omega _1 ^\gamma = \Gamma _1 \cup \gamma$, where $\Gamma _1 = \partial \Omega \cap \partial \Omega _1 ^\gamma$. It follows from Proposition \ref{lengthBounds} and Proposition \ref{lengthZeroLevelSets} that

$$\abs{\gamma_{1}} \leq 2 \pi \sqrt{1-R_1^2} \frac{\abs{\Gamma_1}}{\sqrt{1-r_{\pm}^2}} \leq 2 \pi \sqrt{1-R_1^2},$$where $R_1 =R(\Omega _1 ^\gamma) \in [0,1)$ is the expected critical height of the region $\Omega _1 ^\gamma$. Then, \cite[Proposition 9.54]{montielRos} implies that $\gamma_1$ is either an equator or it is contained in an open hemisphere, denoted by $\s ^2 _+$. 

In the first case, we conclude that $(\Omega, \xi) = (\Omega_0, \xi_0)$ up to a rotation and a dilation applying the Cauchy-Kovalevskaya theorem. In the second case, since $\gamma $ is not contractible in $\Omega$ by Lemma \ref{lemmaCritic}, either $\Omega^\gamma _1 \subset \s ^2 _+$ or $\Omega _2 ^\gamma \subset \s^2 _+$. In any case, we can apply the moving plane method in $\s^2 _+$ (a slight modification of the arguments in \cite{KP,QXia}) to the component $\Omega _i ^\gamma \subset \s ^2 _+$ to conclude that $\Omega$ and $\xi$ are rotationally symmetric. Since the model solutions are the only solutions to \eqref{DPS} with this property, this proves the result.
\end{proof}

\begin{remark}\label{RemTop}
Let $\Omega $ be a finite domain with $k \geq 2$ and $\xi $ a solution to \eqref{OEP}, a simple topological argument and Lemma \ref{lemmaCritic} imply that, if the top level set of $\xi$ contains $k-1$ curves $\gamma _i$, $i=1, \ldots , k$, then $\Omega \setminus \bigcup _{i=1}^{k-1} \gamma _i$ contains an annular component in the conditions of Theorem \ref{rigidityConstantNeumannOEP}. Hence, $(\Omega, \xi) = (\Omega_R, \xi_R)$ up to a rotation and a dilation.
\end{remark}

In the case that $\Omega $ is a topological annulus, Theorem \ref{rigidityConstantNeumannOEP} and Remark \ref{RemTop} imply:
\begin{quote}
{\bf Theorem A:} {\it Let $(\Omega, \xi)$ be a solution to \eqref{OEP}, $\Omega$ an annular domain (i.e., $k=2$). Suppose that $\xi$ has infinitely many maximum points inside $\Omega$. Then, there exist $R \in [0,1)$ such that $(\Omega, \xi) \equiv  (\Omega_R, \xi_R)$.}
\end{quote}

We conclude this section by noting that we can change the topological condition on $\Omega \setminus \gamma$ in Theorem \ref{rigidityConstantNeumannOEP} by a restriction in the NWSS of the solution. In fact, using Theorem \ref{curvature_Theorem} we can obtain the following result:
\begin{theorem}\label{rigidityConstantNeumannOEP2}
	Let $(\Omega, \xi)$ be a solution to the overdetermined problem \eqref{OEP}, where $\Omega$ is of finite type with $k \geq 2$. Suppose that $\xi$ has infinitely many maximum points and that at least one of the connected components of $\Omega \setminus \textup{Max}(\xi)$ has NWSS less or equal to $\tau_0$. Then, $\Omega$ is rotationally symmetric. In particular, there exists $R \in [0,1)$ such that $(\Omega, \xi) \equiv (\Omega_R, \xi_R)$.
\end{theorem}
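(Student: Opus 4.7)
The plan is to adapt the proof of Theorem \ref{rigidityConstantNeumannOEP}, replacing the topological annularity hypothesis by the NWSS bound in order to locate a curve $\gamma\in\gamma^1$ contained in an open hemisphere (or equal to an equator), at which point the moving plane method of \cite{KP,QXia} (respectively, Cauchy--Kovalevskaya) forces rotational symmetry.

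First I would rule out $\overline{\tau}(\mathcal{U})\le 1$: Theorem \ref{ThmAux} would force $(\Omega,\xi)\equiv(\Omega_1,\xi_1)$, whose solution is the height function $\xi_1(r)=r$ with a unique maximum at the north pole, contradicting the hypothesis of infinitely many maxima. Hence $\overline{\tau}(\mathcal{U})\in(1,\tau_0]$, the expected critical height $\bar R(\mathcal{U})\in[0,1)$ is well defined via $\overline{\tau}_-^{-1}$, and the comparison estimates of Section \ref{sec_4} are available. Lemma \ref{lemmaCritic} plus the infinite-maxima assumption force $\gamma^1\neq\emptyset$, and since $\mathcal{U}$ is a connected component of $\Omega\setminus\textup{Max}(\xi)$, at least one curve $\gamma\in\gamma^1$ lies on $\partial\mathcal{U}$. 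Write the partition as $\Omega\setminus\gamma=\Omega^\gamma_1\cup\Omega^\gamma_2$ with $\mathcal{U}\subset\Omega^\gamma_1$, selecting $\gamma$ so that $\overline{\tau}(\Omega^\gamma_1)\le\tau_0$ (always possible since the curves of $\gamma^1$ separate the "low-NWSS" components of $\Omega\setminus\textup{Max}(\xi)$ from the "high-NWSS" ones, and the hypothesis guarantees at least one of the former).

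Next, Theorem \ref{curvature_Theorem} gives $R_1=R(\Omega^\gamma_1)\le R_2=R(\Omega^\gamma_2)$ with $R_2<1$ (strict, since otherwise $\Omega^\gamma_2$ would be of disk type, returning us to the excluded case $(\Omega,\xi)\equiv(\Omega_1,\xi_1)$), together with the pointwise estimate
\[
0\le\frac{R_1}{\sqrt{1-R_1^2}}\le\kappa(p)\le\frac{R_2}{\sqrt{1-R_2^2}},\quad p\in\gamma,
\]
where $\kappa$ is computed with respect to the inner orientation to $\Omega^\gamma_2$. Combining Proposition \ref{lengthBounds} with a Blaschke--Crofton estimate on each component of $\partial\Omega\cap\textup{cl}(\mathcal{U})$ (strictly convex thanks to the overdetermined condition \eqref{OEP} and Proposition \ref{prop_curvatureZeroSets}), exactly as in the proof of Proposition \ref{lengthZeroLevelSets}, I expect to obtain $|\gamma|\le 2\pi\sqrt{1-R_1^2}\le 2\pi$. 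By \cite[Proposition 9.54]{montielRos}, $\gamma$ is then either an equator of $\mathbb{S}^2$ or contained in an open hemisphere $\mathbb{S}^2_+$.

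In the equator case, $R_1=R_2=0$ and the rigidity statement of Theorem \ref{curvature_Theorem} together with Cauchy--Kovalevskaya yield $(\Omega,\xi)\equiv(\Omega_0,\xi_0)$, exactly as in the proof of Theorem \ref{rigidityConstantNeumannOEP}. In the open-hemisphere case, the non-contractibility of $\gamma$ from Lemma \ref{lemmaCritic} places one of $\Omega^\gamma_1,\Omega^\gamma_2$ inside $\mathbb{S}^2_+$, and the moving plane method of \cite{KP,QXia} applied to that component forces $(\Omega,\xi)$ to be rotationally symmetric; Proposition \ref{prop_2.1} then identifies it as $(\Omega_R,\xi_R)$ for some $R\in[0,1)$. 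The main obstacle is precisely the selection step: producing a single curve $\gamma\in\gamma^1$ with $\mathcal{U}\subset\Omega^\gamma_1$ and $\overline{\tau}(\Omega^\gamma_1)\le\tau_0$, and then executing the length estimate on $\gamma$ in the non-annular regime; this requires a careful combinatorial/topological analysis of how $\gamma^1$ partitions $\Omega$ combined with the uniform Blaschke curvature bound on every component of $\partial\Omega\cap\textup{cl}(\mathcal{U})$ coming from the overdetermined boundary data.
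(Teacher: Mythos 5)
Your endgame (equator $\Rightarrow$ Cauchy--Kovalevskaya; open hemisphere $\Rightarrow$ moving planes) coincides with the paper's, and the preliminary reduction via Theorem \ref{ThmAux} is fine. The gap is in the middle step: you propose to place a top-level curve $\gamma$ in a hemisphere by proving $|\gamma|\le 2\pi$ through Proposition \ref{lengthBounds} combined with a Blaschke--Crofton bound on $\partial\Omega\cap\textup{cl}(\U)$, and you yourself flag that this requires an unexecuted ``combinatorial/topological analysis'' in the non-annular regime. That obstacle is real and the route does not close. Proposition \ref{lengthZeroLevelSets} is only established for annular components; Proposition \ref{prop_curvatureZeroSets} gives the curvature bound only at points realizing $\max_{\partial\Omega\cap\textup{cl}(\U)}|\nabla\xi|$, which under \eqref{OEP} is only the boundary component(s) with the largest constant $b_i$, so ``strictly convex on every component of $\partial\Omega\cap\textup{cl}(\U)$'' is not justified; and even granting a Blaschke bound on each of the $m$ components of $\partial\Omega\cap\textup{cl}(\U)$, Proposition \ref{lengthBounds} only controls the total length $|\gamma^{\U}|$ by the total $|\Gamma^{\U}|$, yielding at best $|\gamma|\le 2\pi m\sqrt{1-R_1^2}$, which is useless for $m\ge 2$. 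The ``selection step'' producing a curve $\gamma$ with $\U\subset\Omega_1^\gamma$ and $\overline{\tau}(\Omega_1^\gamma)\le\tau_0$ is likewise unsupported: the claim that the curves of $\gamma^1$ separate low-NWSS from high-NWSS components is established nowhere.

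The missing idea is that the hypothesis $\overline{\tau}(\U)\le\tau_0$ makes the whole length machinery unnecessary. Proposition \ref{prop_curvatureMaxSet} applied directly to $\U$ gives $\kappa\le -\bar R/\sqrt{1-\bar R^2}\le 0$ along \emph{every} curve of $\partial\U\cap\textup{Max}(\xi)$, with respect to the inner orientation to $\U$ (when $\overline{\tau}(\U)=\tau_0$ one has $\bar R=0$ and still $\kappa\le 0$). Each such curve is therefore convex from the opposite side, hence contained in a closed hemisphere, and so at least one partition component $\Omega_1^\gamma$ lies in a closed hemisphere --- with no length estimate, no Crofton formula, and no restriction on the topology of $\U$. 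If the curvature of $\gamma$ vanishes identically, $\gamma$ is an equator and Cauchy--Kovalevskaya gives $(\Omega,\xi)\equiv(\Omega_0,\xi_0)$; otherwise one tilts the closed hemisphere slightly to an open one containing $\textup{cl}(\Omega_1^\gamma)$ and runs the moving plane method. Replacing your length-estimate step by this curvature-sign argument is what completes the proof.
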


\begin{proof}
Let $\U \subset \Omega \setminus \textup{Max}(\xi)$ such that $\overline{\tau}(\U) \leq \tau_0$ by hypothesis and set $\partial \mathcal U \cap {\rm Max}(\xi) = \bigcup _{j=1}^m \gamma _j$, the boundary components of $\mathcal U $ in the top level set. Theorem \ref{curvature_Theorem} implies that the geodesic curvature of $\gamma_j$, with respect to the inner orientation to $\mathcal{U}$, is non-positive. Therefore, each $\gamma _j$ is contained in a closed hemisphere and hence, there exists, at least, a connected component $\gamma \subset \partial \mathcal U \cap {\rm Max}(\xi) $ such that one of the components, $\Omega \setminus \gamma = \Omega^\gamma _1 \cup \Omega _2 ^\gamma$, of the partition with respect to $\gamma$, say $\Omega _1 ^\gamma$, is contained in a closed hemisphere, denoted by $\textup{cl}(\mathbb{S}^2_+)$.

We distinguish two cases. First, assume that the geodesic curvature of $\gamma$ vanishes identically. In this case, Cauchy-Kovalevskaya theorem implies that $(\xi , \Omega) = (\xi _0 , \Omega _0)$, up to a rotation and scaling. Second, if the geodesic curvature does not vanish identically, we can move slightly $\textup{cl}(\mathbb{S}^2_+)$ to obtain an open hemisphere, still denoted by $\s ^2_+$, such that $\textup{cl}(\Omega _1^\gamma) \subset \s _2 ^+$. Hence, again by the moving plane method using the overdetermined condition, we infer that  $(\xi , \Omega) \equiv (\xi _R , \Omega _R)$ for some $R \in (0,1)$.
\end{proof}

\section{Geometric application}\label{sec_6}

In this section, we will exploit the correspondence of solutions to \eqref{OEP} and free boundary minimal surfaces explained in the introduction. Let $\mathbb{B}^3 (R)$ denote the open Euclidean ball of radius $R>0$ centered at the origin, $\partial \mathbb{B}^3 (R)= \mathbb{S}^2 (R)$. For simplicity, we also set $\mathbb{B}^3=\mathbb{B}^3 (1)$ and $\s ^2 \equiv \s ^2 (1)$.

\begin{definition}\label{surfacesWithFreeBoundaries}
	Let $\Sigma \subset \mathbb{R}^3$ be an open immersed minimal surface with boundary. We will say that $ \Sigma$ has \textup{free boundaries} if each boundary component of $ \Sigma$ meets orthogonally a sphere centered at the origin, possibly of different radii. If $\Sigma \subset \mathbb{B}^3$ and $\partial \Sigma \subset \mathbb{S}^2$ then $\Sigma$ is said to be a free boundary minimal surface in the unit ball.
\end{definition}

We recall the correspondence between minimal surfaces with free boundaries and the solutions to \eqref{OEP} (cf. \cite[Proposition 2.1]{Sou}).
\begin{proposition}\label{Correspondence}
	Let $\Omega \subset \mathbb{S}^2$ be a $\mathcal{C}^2$-domain in the sphere. Suppose that $\Omega$ is not a topological disk and there exists a nonzero function $\xi \in \mathcal{C}^2 (\Omega) \cap \mathcal{C}^1 (\textup{cl}(\Omega))$ solution to the problem
	\begin{equation}\label{OEPGeneral}
		\left\{	\begin{matrix}
			\Delta \xi + 2 \xi=0 & \text{ in } & \Omega ,\\[2mm]
			\xi = 0 & \text{ along } & \partial \Omega, \\[2mm]
			\abs{\nabla \xi}^2= b_{i}^2 & \text{ along } & \Gamma _i \in \pi_0 (\partial \Omega), \, i \in \set{1, \ldots ,k},
		\end{matrix}\right.
	\end{equation}
	where $\nu$ is the exterior unit normal to $\Omega$ and $b_{i}$ is a constant for each $i \in \set{1, \ldots ,k}$. Then, the map:
	\begin{equation*}
		\begin{split}
			X_{\xi}:  \, & \Omega \longrightarrow \mathbb{R}^3 \\
			& \, z \,\, \mapsto X_{\xi} (z):= \nabla^{\sp^2} \xi (z)+ \xi (z) \cdot z.
		\end{split}
	\end{equation*}
	defines a branched minimal surface in $\mathbb{R}^3$ with free boundaries, where each boundary component $X_{\xi} (\Gamma _i)$ lies in the sphere $\mathbb{S}^2 \left(\abs{b_i}\right)$.
	
	Conversely, let $\Sigma$ be a minimal surface with free boundaries, $\partial \Sigma = \bigcup _{i=1}^k \zeta _i$, and injective Gauss map $N: \Sigma \to \mathbb{S}^2$. Set $u(p) := \meta{p}{N(p)}$ its support function. Then
	\begin{equation}\label{supportFunction}
		\xi (z) := (u \circ N^{-1})(z) =\pscalar{N^{-1} (z)}{z}, \quad \forall z \in N (\Sigma)= \Omega
	\end{equation}
satisfies \eqref{OEPGeneral}, where $\partial \Omega = \bigcup _{i=1}^k \Gamma _i$, $N(\zeta _i)=\Gamma _i$, and $\abs{b_i}$ is the radius of the sphere in which $\zeta _i \in \pi _0 ( \partial \Sigma)$ is contained.
\end{proposition}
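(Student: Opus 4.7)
The strategy is the classical one going back to Hoffman--Osserman and Reilly, essentially as in the cited work of Souam: parameterize minimal surfaces by their Gauss map and translate the minimality condition into the linear PDE satisfied by the support function. Both implications reduce to a single computation of the differential of $X_{\xi}$, which I isolate first.

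\textbf{Key identity and forward direction.} For $\xi \in \mathcal{C}^2(\Omega)$, I compute $dX_\xi$ by splitting the ambient derivative into tangential and normal parts along $\s^2$. The Gauss formula $\nabla^{\mathbb{R}^3}_v V = \nabla_v V - \pscalar{v}{V} z$ (valid for any tangent field $V$ along $\s^2$, because the second fundamental form with respect to the outward normal $z$ is $-g_{\sp^2}$) yields, after a short calculation,
\begin{equation}\label{keyId}
dX_\xi(v) = \nabla_v \nabla \xi + \xi \, v \in T_z \s^2, \qquad v \in T_z \s^2.
\end{equation}
In particular, the tangent plane of the image at $X_\xi(z)$ lies in $T_z\s^2$, so the Gauss map of $X_\xi$ is the inclusion $\Omega \hookrightarrow \s^2$. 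The shape operator at a regular point is therefore $S = -(dX_\xi)^{-1}$, and \eqref{keyId} realizes $dX_\xi$ as the symmetric operator $A := \nabla^2 \xi + \xi \cdot \mathrm{Id}$. In dimension two, $\mathrm{tr}(A^{-1}) = \mathrm{tr}(A)/\det(A)$, so
\[
2H \det A = -(\Delta \xi + 2 \xi) = 0
\]
by the first equation of \eqref{OEPGeneral}. Hence $X_\xi$ is a branched minimal immersion with branch locus $\{\det A = 0\}$. For the free boundary condition, $\xi \equiv 0$ on $\Gamma_i$ gives $X_\xi = \nabla \xi$ there, so $|X_\xi|^2 = |\nabla \xi|^2 = b_i^2$ places $X_\xi(\Gamma_i) \subset \s^2(|b_i|)$; orthogonality follows because the outer normal to $\s^2(|b_i|)$ at $X_\xi(z)$ is parallel to $\nabla \xi(z) \in T_z\s^2$, while the unit normal to the image surface is $z$.

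\textbf{Reverse direction.} Given $\Sigma$ with injective Gauss map $N$, set $\psi = N^{-1} : \Omega \to \Sigma$ and decompose $\psi(z) = \xi(z) z + T(z)$ with $T(z) \in T_z \s^2$ and $\xi$ as in \eqref{supportFunction}. Because $z$ is the unit normal to $\Sigma$ at $\psi(z)$, we have $\pscalar{d\psi(v)}{z} = 0$ for every $v \in T_z\s^2$; differentiating the identity $\pscalar{T}{z} = 0$ and expanding $d\psi$ then forces $v(\xi) = \pscalar{T}{v}$ for all $v$, so $T = \nabla \xi$ and $\psi = X_\xi$. Reading the trace--determinant computation of the forward direction backwards now gives $\Delta \xi + 2 \xi = 0$. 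For the boundary conditions, orthogonality of $\Sigma$ and $\s^2(|b_i|)$ along $\zeta_i$ means $\pscalar{N(p)}{p} = 0$ for $p \in \zeta_i$, which is precisely $\xi \equiv 0$ along $\Gamma_i = N(\zeta_i)$; moreover, on $\Gamma_i$ one has $|\nabla \xi|^2 = |X_\xi - \xi z|^2 = |\psi|^2 = b_i^2$, giving $|\nabla \xi| = |b_i|$.

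The main obstacle is essentially bookkeeping: keeping the sign conventions for the second fundamental form of $\s^2$ straight throughout the ambient derivative calculation. The one mild subtlety is ensuring that $X_\xi$ defines a genuine branched minimal surface across the branch locus $\{\det A = 0\}$ in the forward direction, which follows from the analyticity of $\xi$ (and hence of $A$) together with standard regularity for Weierstrass-type data.
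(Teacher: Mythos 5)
Your argument is correct and is precisely the standard Gauss-map/support-function computation that the paper itself does not reprove but simply cites from Souam's Proposition 2.1 (going back to Hartman--Wintner and Reznikov): the key identity $dX_\xi(v)=\nabla_v\nabla\xi+\xi v$, the identification of the shape operator with $-(\nabla^2\xi+\xi\,\mathrm{Id})^{-1}$, and the boundary bookkeeping all match the intended proof. The only point treated loosely is the extension across the branch locus $\{\det(\nabla^2\xi+\xi\,\mathrm{Id})=0\}$, which is exactly the part the paper also delegates to the classical references, so no genuine gap remains.
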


From now on, up to a dilation in $\r ^3$, $\Sigma \subset \mathbb{B}^3$ will denote an embedded minimal annulus with free boundaries, $\partial \Sigma= \zeta_1 \cup \zeta_2$, such that $\zeta_1 \subset \mathbb{S}^2$ and $\zeta_2 \subset \mathbb{S}^2 (\tilde r)$ for some $0<\tilde r \leq 1$. We will always assume that the boundaries of $\Sigma $ intersect the spheres \textit{from the inside}, i.e., let $\nu: \partial \Sigma \to \mathbb{S}^2$ the exterior conormal to $\Sigma$ then, $\pscalar{p}{\nu (p)} >0$ for all $p \in \partial  \Sigma $. 

Now, we are ready to prove Theorem B. We rewrite it here using the notation introduced so far.
\begin{quote}\label{uniquenessFreeBoundaries}
{\bf Theorem B:} {\it Let $\Sigma \subset \mathbb{B}^3$ be an embedded minimal annulus with free boundaries, $\partial \Sigma= \zeta_1 \cup \zeta_2$, such that $\zeta_1 \subset \mathbb{S}^2$ and $\zeta_2 \subset \mathbb{S}^2 (\tilde r)$ for some $0< \tilde r \leq 1$. Suppose that its support function has infinitely many critical points. Then, there exists $R \in [0,1)$ such that $\Sigma = C_R$, up to a rotation around the origin, where $C_R$ is one of the model catenoids defined in  \eqref{modelCatenoids}.}
\end{quote}
\begin{proof}[Proof of Theorem B]
First, since $\Sigma$ is minimal, the Hopf differential is holomorphic and so the interior zeros are isolated and of negative index. Moreover, since $\textup{cl}(\Sigma)$ intersects some sphere at a constant angle, Joachimstahl's theorem \cite[p. 152]{dC} implies that each boundary component is a line of curvature, and so the zeros at the boundary are also isolated and of negative index. Since the zeros of the Hopf differential correspond to umbilic points we have that the index at any umbilical point is negative. Using that $\Sigma$ is a topological annulus, the Poincar\'e-Hopf index theorem asserts that $\Sigma$ is either totally umbilical, which is not possible, or there are no umbilic points (cf. \cite{Ho,JNit} for details).

Next, since there are no umbilic points, the Gaussian curvature $K$ is strictly negative in ${\rm cl}(\Sigma)$ and the boundary curves $\zeta _1 \subset \s ^2 $ and $\zeta _2 \subset \s ^2 (\tilde r)$ are strictly convex spherical curves. This also implies that the Gauss map $N : \textup{cl}(\Sigma) \to \s ^2 $ is a local diffeormorphism. Moreover, $N:\zeta_i\to N(\zeta_i) = \Gamma _i$, $i=1,2$, is one to one. 

Consider the projection map $\varrho : \s ^2 (\tilde r) \to \s ^2 $ given by $\varrho (p) = p /|p|$. We next show:

\begin{quote}
{\bf Claim A:} {\it Let $\Sigma$ be an embedded minimal annulus with free boundaries. Then $\mathbb{S}^2 \setminus (\zeta_1 \cup \varrho (\zeta_2))= A \cup B_1 \cup \varrho(B_2)$, where $A$ is a topological annulus and $B_1 \subset \s ^2$ and $\varrho(B_2)$, $B_2 \subset \s ^2 (\tilde r)$, are disjoint topological disks with boundaries $\zeta_1$ and $\varrho (\zeta_2)$ respectively. In addition, $B_1$ and $\varrho(B_2) $ are contained in two different closed hemispheres of $\mathbb{S}^2$.}
\end{quote}
\begin{proof}
Let $F_i = \int_{\zeta_i} \nu$ be the flux along $\zeta_i$ (for $i=1,2$). Then, since $\Sigma$ is a minimal annulus, we have $F_1=-F_2$. But  $\Sigma$ has free boundaries, then it follows that $\nu (p) = p/|p|$ for each $p \in \zeta_i$, $i=1,2$, so we obtain that $\zeta_1$ and $\varrho(\zeta_2)$ are convex curves contained in $\mathbb{S}^2$ with opposite center mass.
	
On the other hand, since the convex hull of $\zeta_1$ (resp. $\varrho(\zeta _2)$), ${\rm conv}(\zeta_1)$ (resp. ${\rm conv}(\varrho(\zeta _2))$), is given by the intersection (see \cite{Sc}) of all closed halfspaces containing $\zeta_1$ (resp. $\varrho(\zeta _2)$), if $H$ is a closed halfspace of equation $\langle q,v_0\rangle\leq r_0$, for $v_0\in\r^3\setminus\{0\},r_0\in\r$, with $\zeta_1\subset H$ (resp. $\varrho(\zeta _2) \subset H$) then it is clear that $F_1\in H$ (resp. $F_2 \in H$). Moreover, $F_i\in\partial H$ if and only if $\zeta_i\subset\partial H$. Therefore, $F_1$ (resp. $F_2$) is a non zero vector with $F_1\in {\rm conv}(\zeta_1)\setminus\zeta_1$ (resp. $F_2\in {\rm conv}(\varrho(\zeta_2))\setminus \varrho(\zeta_2)$). This fact proves that both $\zeta_1$ and $\varrho(\zeta_2) $ cannot be contained in a common {\it closed} hemisphere of $\s^2$, otherwise $F_1+F_2\neq 0$. Thus the result is proven.
\end{proof}

Let $B_i$  be the convex inner domain inside $\mathbb{S}^2 (r_i)$, $r_1 = 1$ and $r_2 = \tilde r$, bounded by the boundary component $\zeta_i$, $i \in \{1,2\}$. Consider the closed cones of $\r^3$ given by
	\begin{equation}\label{conosss}
		{\cal C}_i=\{\lambda p:\ \lambda\geq0,\ p\in \zeta_i \cup B_i\},\quad i \in \{1,2\}.
	\end{equation}

From now on we fix the outward orientation $N$, that is, $N$ points towards $\s ^2 (r_i) \setminus \textup{cl}(B_i)$ along each boundary component $\zeta _i$. For every support plane $P$ to ${\cal C}_i$ consider its exterior unit normal $v_P$. The set ${\cal N}_i$ of exterior unit normals $v_P$ to ${\cal C}_i$ is a convex set of the sphere (see \cite{Sc}). Moreover, since $\Sigma$ meets $\s^2$ and $\mathbb{S}^2 (\tilde r)$ at constant angles, the boundary of ${\cal N}_i$ is determined by the curve $N(\zeta_i)$.
	
\begin{quote}
{\bf Claim B:} {\it$\mathcal N_1 \cap \mathcal N _2 = \emptyset$.}
\end{quote}
\begin{proof}[Proof of Claim B]
First, by contradiction, assume that $N(\zeta_1)\cap N(\zeta_2) \neq \emptyset$. Then, there would exist $N_0\in N(\zeta_1)\cap N(\zeta_2)$, and so the plane $P$ with equation $\langle q,N_0\rangle=0$ satisfies that $\zeta_1 \cup \zeta_2$ is contained in a closed halfspace determined by $P$. But, this contradicts that $\zeta_1$ and $\varrho(\zeta_2)$ cannot be contained in a common {\it closed} hemisphere of $\s^2$ by Claim A. 
Second, by contradiction again, assume $\mathcal N _1 \subset \mathcal N _2$. An argument as above shows that $\zeta _1$ and $\varrho(\zeta_2)$ are contained in the same hemisphere. Which is a contradiction.
\end{proof}	

Therefore, by Claim B, $\s^2\setminus(N(\zeta_1)\cup N(\zeta_2))$ has three connected components: two open disks whose closure are ${\cal N}_1 $ and ${\cal N}_2$ and an open annulus $\Omega$ whose boundary curves are $N(\zeta_1) \cup N(\zeta_2)$. Hence, as the degree of a differential map between compact manifolds is the same in all regular values (see \cite[Chapter 5, Lemma 1.4]{Hir}) an elementary topological argument shows that $N$ is a global diffeomorphism from $\textup{cl}(\Sigma)$ onto the closure of the annulus $\Omega := N (\Sigma) \subset \s^2$. Thus, we can pushforward the support function $u (p):= \meta{p}{N(p)}$, $p \in \textup{cl}(\Sigma)$, to $\textup{cl}(\Omega) := N(\textup{cl}(\Sigma))$, $\xi$ defined in \eqref{supportFunction}, and $\xi$ satisfies \eqref{OEPGeneral} and has infinitely many critical points. 

As a consequence of the injectivity of the Gauss map, we will deduce that the support function of $\Sigma$ must have a constant sign.
	
\begin{quote}
{\bf Claim C:} {\it Let $\Sigma $ be an embedded minimal annulus with free boundaries. Then, 
$$ \Sigma \cap  {\cal C}_i  = \emptyset , \quad i \in \{1,2\}, $$where ${\cal C}_i$ are the closed convex cones defined by (\ref{conosss}).}
\end{quote}
\begin{remark} 
This result can be easily thought of if we replace the cones ${\cal C}_i$ with a convex body whose boundary is a smooth surface and agrees with the cone except in a small neighborhood of the vertex. And also replace support planes with tangent planes.
\end{remark}
\begin{proof}[Proof of Claim C]
Let $v_i\in B_i\subset{\cal C}_i$ a fixed vector contained in the interior of ${\cal C}_i$. We first observe that the cone $\mu v_i+{\cal C}_i$ given as the translation of ${\cal C}_i$ using the vector $\mu v_i$ is included in the interior of ${\cal C}_i$ for each $\mu>0$. 

Then, by Claim A, $(\mu v_i+{\cal C}_i)\cap \textup{cl}(\Sigma) =\emptyset$ for $\mu $ large enough. Now, decrease $\mu$ until $(\mu v_i+{\cal C}_i)\cap \overline\Sigma\neq\emptyset$ for the first time. Denote by $\mu_0$ that $ \mu$.
	
If $\mu_0>0$ then $\mu_0 v_i+{\cal C}_i$ intersects $\textup{cl}(\Sigma)$ at an interior point $p_0\in\Sigma$. So, the tangent plane of $\Sigma$ at $p_0$ is a support plane of $\mu_0 v_i+{\cal C}_i$ (or equivalently, a support plane of ${\cal C}_i$), with the same interior unit normal $N(p_0)$. This is a contradiction because the image of the interior unit normal of $\Sigma$ and ${\cal C}_i$ are two disjoint sets of the sphere by Claim B.
	
If $\mu_0=0$ then $\zeta_i\subset{\cal C}_i\cap \textup{cl}(\Sigma)$. But it is impossible the existence of another point $p_0\in\Sigma$ at ${\cal C}_i\cap \textup{cl}(\Sigma)$ using the same reasoning as in the case $\mu_0>0$.
\end{proof}

It is important to recall that we are considering the outward orientation. Then, Claim C implies that the support function $u (p) = \meta{p}{N(p)} $ is positive at some point on $\Sigma$.

\begin{quote}
{\bf Claim D:} {\it Let $ \Sigma $ be an embedded minimal annulus with free boundaries. Then, its support function $u (p)$ is strictly positive in $\Sigma$ and vanishes along $\partial \Sigma$. Here, $N$ is the outward orientation.}
\end{quote}
\begin{proof}[Proof of Claim D]
First, we will show that $ \Sigma \cap L({\bf o},p) = \set{p}$ for all $p \in \Sigma$, where $L({\bf o},p)$ is the half-line starting at the origin ${\bf o}$ passing through $p \in \Sigma$. 
	
Assume this was not the case. Consider the dilation $\Sigma _\lambda :=\lambda \Sigma$, $\lambda \geq 1$. Observe that $\partial \Sigma _\lambda \subset \mathcal C _1 \cup \mathcal C _2$, where $\mathcal{C}_i$, $i=1,2$, are the closed cones defined in \eqref{conosss}. Then, for $\lambda >1 $ big enough, $\Sigma \cap \Sigma _\lambda = \emptyset$. If there were a point $q \in \Sigma $ so that $ \abs{\Sigma \cap L({\bf o},p)} \geq 2$, then there exists $\bar \lambda >1$ so that $\Sigma \cap \Sigma _{\bar \lambda } \neq \emptyset$ and $\Sigma \cap \Sigma _{ \lambda } =\emptyset$ for all $\bar \lambda < \lambda$, observe that the intersection points must be at the interior of the surfaces. This implies that the tangent planes at the intersection points $\Sigma \cap \Sigma _{\bar \lambda }$ must be equal and, also, the normals must be equal. But this contradicts that the Gauss map is a global diffeomorphism.
	
Therefore, $u(p) = \meta{p}{N(p)} \geq 0$ (with the outward orientation) on $\textup{cl}(\Sigma)$. Since $u = 0$ along $\partial \Sigma$ and $u$ is a Jacobi function, it is standard that either $u\equiv 0$, which is impossible, or $u >0 $ on the interior. This finishes the proof.
\end{proof}
\begin{remark}
When $\Sigma \subset \b ^3$ is an embedded free boundary minimal surface, Kusner and McGrath in \cite[Corollary 4.2]{Kus} proved that $\Sigma$ is a radial graph. However, their proof depends strongly on the two-piece property for embedded free boundary minimal surfaces in $\b^3$ (see \cite{Lima}), a property that is not clear to hold in our case.
\end{remark}

Observe that Claim D implies that $\textup{cl}(\Sigma)$ is a radial graph. Finally, we need to prove:

\begin{quote}
{\bf Claim E:} {\it If $ \# {\rm Crit}(u) = + \infty$, then $ \tilde \gamma = {\rm Max}(u) = {\rm Crit}(u)$ is a closed simple curve.}
\end{quote}
\begin{proof}[Proof of Claim E]
If $\# {\rm Crit} (u) = +\infty$, by analyticity, we already know that there exists, at least, one simple curve $\tilde \gamma$ contained in ${\rm Crit}(u)$; we will see that this is the only one. Consider $\varrho : \textup{cl}(\Sigma) \to \s ^2 $ the radial projection $\varrho(p) = p / |p|$ and $ N : \textup{cl}(\Sigma) \to \s ^2 $ the outward orientation.

Is it easy to observe that $p \in {\rm Crit}(u)$ if, and only if, $\varrho (p) = N (p)$ and hence $ \gamma = \varrho (\tilde \gamma) =  N (\tilde \gamma)$. Without loss of generality, up to a rotation, we can assume that the flux along the boundary components of $ \Sigma$ is vertical. Thus, since $\textup{cl}(\Sigma)$ is a radial graph, $\gamma $ divides $\s ^2$ into two components $S^+ \cup S^- = \s ^2 \setminus \gamma$ where $S^+$ contains the north pole and $S^-$ contains the south pole. 

Moreover, $ \gamma \subset \varrho(\textup{cl}(\Sigma)) \cap  N (\textup{cl}(\Sigma))$ divides both set into two components, that is, 
$$ \mathcal F ^\pm  = \varrho(\textup{cl}(\Sigma)) \cap S ^\pm \text{ and } \Omega ^\pm  = \ N(\textup{cl}(\Sigma)) \cap S ^\pm  $$

Since both, $\varrho$ and $N$, are one-to-one, they map a component $\textup{cl}(\Sigma) \setminus \tilde \gamma$ into a component of their image. Let $\Sigma ^+ $ be the connected component of $\Sigma \setminus \tilde \gamma$ whose boundary component $\zeta ^+ = \partial \Sigma^+ \setminus  \tilde \gamma$ has flux in the positive vertical axis. Thus, by the free boundary condition $N (\Sigma ^+) = \Omega ^-$ and $\varrho(\Sigma ^+) = \mathcal F ^+$, which implies that $N (\Sigma ^+) \cap F(\Sigma ^+) = \emptyset$. The same happens on the other component $\Sigma ^-$ of $\Sigma \setminus \tilde \gamma$ and boundary component $\zeta ^- = \partial \Sigma^- \setminus \tilde\gamma$. Therefore 
$$ N (\Sigma \setminus \tilde \gamma ) \cap \varrho(\Sigma \setminus \tilde \gamma) = \emptyset , $$which means that the only critical points of $u $ are those of $\tilde \gamma$. 

So far, we have proven that $\tilde \gamma = {\rm Crit}(u) = {\rm Crit} (d)$, where $d(p) = |p|^2$ is the square distance function. Since $\Sigma$ is minimal, the only possibility is that $\tilde \gamma$ is a curve of absolute minimums of $d$, that is, there exists $\tilde R \in (0,1)$ so that $\tilde \gamma \subset \s^2 (\tilde R)$, $\Sigma$ is tangential to $\s ^2 (\tilde R)$ along $\tilde \gamma$, and $\tilde \gamma$ is line of curvature of $\Sigma$ by Joachimstahl's theorem \cite[p. 152]{dC}. This proves Claim E.
\end{proof}

Then, we conclude that $(\Omega:=N(\Sigma), \xi := u \circ N^{-1} )$ is a solution to \eqref{OEP} satisfying the conditions of Theorem A since $\gamma = N (\tilde \gamma) = {\rm Max}(\xi)$ by Claim E. Hence $(\xi , \Omega )\equiv (\xi_R, \Omega _R)$ for some $R \in [0,1)$, this finishes the proof of Theorem B.
\end{proof}

Finally, Corollary C follows as an immediate consequence of Theorem B, taking into account that $C_0$ is the only free boundary catenoid of the family in $\b ^3$.

\section*{Acknowledgments}
\mbox{}

The first author is partially supported by Spanish MIC Grant PID2020-118137GB-I00 and MIC-NextGenerationEU Grant 30.RP.23.00.04 CONSOLIDACION2022.

The second author is partially supported by the {\it Maria de Maeztu} Excellence Unit IMAG, reference CEX2020-001105-M, funded by MCINN/AEI/10.13039/ 501100011033/CEX2020-001105-M.

\end{document}